\newtheorem{theorem}{Theorem}[section]
\newtheorem{definition}[theorem]{Definition}
\newcommand{\R}{\mathbb{R}}
\newcommand{\grad}{\nabla}
\newcommand{\eps}{\epsilon}
\newcommand{\diag}{\operatorname{diag}}
\newcommand{\SNR}{\text{SNR}}
\newcommand{\VAR}{\text{Var}}
\begin{document}

\title{Statistical Analysis of Synchrosqueezed Transforms}

\author{Haizhao Yang\\\
  \vspace{0.1in}\\
  Department of Mathematics, Duke University
}

\date{October 2014, Revised: August 2016}
\maketitle

\begin{abstract}
Synchrosqueezed transforms are non-linear processes for a sharpened time-frequency representation of wave-like components. They are efficient tools for identifying and analyzing wave-like components from their superposition. This paper is concerned with the statistical properties of compactly supported synchrosqueezed transforms for wave-like components embedded in a generalized Gaussian random process in multidimensional spaces. Guided by the theoretical analysis of these properties, new numerical implementations are proposed to reduce the noise fluctuations of these transforms on noisy data. A MATLAB package SynLab together with several heavily noisy examples is provided to support these theoretical claims.

\end{abstract}

{\bf Keywords.} Wave-like components, instantaneous (local) properties, synchrosqueezed transforms, noise robustness, generalized Gaussian random process.

{\bf AMS subject classifications: 42A99 and 65T99.}

\section{Introduction}
\label{sec:intro}
Non-linear and non-stationary wave-like signals (also termed as chirp signals in $1$D) are ubiquitous in science and engineering, e.g., clinical data \cite{HauBio2,HauBio1}, seismic data \cite{seismic1,seismic2,GeoReview}, climate data \cite{SSRobust,PhysicalAnal}, astronomical data \cite{Chirplet1,Chirplet2}, materials science \cite{2DCrystal,SingerSinger:06,2DGWS}, and art investigation \cite{Canvas}. 
Analyzing instantaneous properties (e.g., instantaneous frequencies, instantaneous amplitudes and instantaneous phases \cite{Boashash92,Picinbono97})  or local properties (concepts for $2$D signals similar to ``instantaneous" in $1$D) of signals has been an important topic for over two decades. In many 
applications \cite{Daubechies2011,Eng2,ENG1,Hau-Tieng2013,SSWPT,SSCT}, these signals can be modeled as a superposition of several wave-like components with slowly varying amplitudes, frequencies or wave vectors, contaminated by noise. For example, a complex signal
\begin{equation}
\label{P1}
f(x)=\sum_{k=1}^K \alpha_k(x) e^{2\pi i N_k \phi_k(x)} + e(x),
\end{equation}
where $\alpha_k(x)$ is the instantaneous (local) amplitude, $N_k \phi_k(x)$ is the instantaneous (local) phase, $N_k\phi_k'(x)$ is the instantaneous frequency (or $N_k\grad\phi_k(x)$ as the local wave vector), and $e(x)$ is a noisy perturbation term. 

A powerful tool for analyzing signal \eqref{P1} is the synchrosqueezed transform ({SST}) consisting of a linear  time-frequency analysis tool and a synchrosqueezing technique. It belongs to more generally time-frequency reassignment techniques \cite{Auger1995,Chassande-Mottin2003,Chassande-Mottin1997,Daubechies1996} (see also \cite{TFReview} for a recent review). The SST was initialized in \cite{Daubechies1996} and further analyzed in \cite{Daubechies2011} for the $1$D wavelet transform. Suppose $W_f(\xi,x)$ is the wavelet transform of a $1$D wave-like component $f(x)=\alpha(x)e^{2\pi i N\phi(x)}$. The wavelet time-frequency representation has a wide support spreading around the instantaneous frequency curve $(N\phi'(x),x)$. It was proved that the instantaneous frequency information function $v_f(\xi,x)=\frac{\partial_x W_f(\xi,x)}{2\pi i W_f(\xi,x)}$ is able to approximate $N\phi'(x)$. Hence, the synchrosqueezing technique shifts the value of $W_f(\xi,x)$ from $(\xi,x)$ to $(v_f(\xi,x),x)$, generating a sharpened time-frequency representation $T_f(v,x)$ with a support concentrating around the curve $(N\phi'(x),x)$. The localization of the new representation not only improves the resolution of the original spectral analysis due to the uncertainty principle but also make it easier to decompose the superposition in \eqref{P1} into individual components.

 A variety of {SST}s have been proposed after the $1$D synchrosqueezed wavelet transform (SSWT) in \cite{Daubechies1996}, e.g., the synchrosqueezed short time Fourier transform (SSSTFT) in \cite{SSSTFT}, the synchrosqueezed wave packet transform (SSWPT) in \cite{1DSSWPT,SSWPT}, the synchrosqueezed curvelet transform (SSCT) in \cite{SSCT} and the $2$D monogenic synchrosqueezed wavelet transform in \cite{2Dwavelet}. Rigorous analysis has proved that these transforms can accurately decompose a class of superpositions of wave-like components and estimate their instantaneous (local) properties if the given signal is noiseless. To improve the synchrosqueezing operator in the presence of strongly non-linear instantaneous frequencies,  some further methods have been proposed in \cite{TFReview,SSGeneral,Oberlin} based on an extra investigation of the higher order derivatives of the phase of a wave-like component. All these {SST}s are compactly supported in the frequency domain to ensure accurate estimations. To better analyze signals with a trend or big data sets that need to be handled in real-time computation, a recent paper \cite{Hau2014} proposes a new synchrosqueezing method based on carefully designed wavelets with sufficiently many vanishing moments and a minimum support in the time domain. Previous synchrosqueezed transforms need to access every data sample of $f(x)$ to compute the synchrosqueezed transform at a specific time or location $x$. However, compactly supported synchrosqueezed transforms only need a small portion of the data samples in a neighborhood of $x$. Hence, compactly supported synchrosqueezed transforms are computationally more efficient and are better tools for modern big data analysis. However, mathematical analysis on the accuracy of this compactly supported SSWT is still under development. This paper addresses this problem in the framework of the SSWPT that includes the SSWT as a special case.
 
Another important topic in the study of {SST}s is the statistical analysis of the synchrosqueezing operator, since noise is ubiquitous in real applications. A pioneer paper \cite{SSRobust3} in this direction studied the statistical properties of the $1$D spectrogram reassignment method by calculating the probability density function of white Gaussian noise after reassignment. A recent paper \cite{SSRobust} focused on the statistical analysis of the $1$D SSWT for white Gaussian noise. It estimated the probability of a good estimation of the instantaneous frequency provided by the instantaneous frequency information function $v_f(\xi,x)$. A following paper \cite{SSRobust2} generalized its results to generalized stationary Gaussian process. To support the application of {SST}s to real-time problems and multidimensional problems, this paper analyzes the statistical properties of multidimensional {SST}s that can be compactly supported in the time domain. 

Turning to the robustness issue in a numerical sense, it is of interest to design an efficient implementation of a sharpened time-frequency representation with reduced noise fluctuations. The idea of multitapering, first proposed in \cite{Taper1} for stationary signals and further extended in \cite{Taper2,Taper3} for non-stationary signals, attempted to improve the statistical stability of spectral analysis by generating multiple windowed trials of the noisy signal and averaging the spectral analysis of these trials. By combining the multitapering and time-frequency reassignment techniques, \cite{Taper4} proposed the $1$D multitapering time-frequency reassignment for a sharpened time-frequency representation with reduced noise fluctuations. Since its implementation is based on Hermite functions, its efficient generalization in multidimensional spaces is not straightforward. Guided by the theoretical analysis of the statistical properties of {SST}s, this paper proposes efficient numerical implementations of multidimensional {SST}s based on highly redundant frames. Since the {SST} of a noiseless signal is frame-independent, averaging the SSTs from multiple time-frequency frames reduces the noise fluctuation while keeping the localization of the synchrosqueezed time-frequency representation.

The rest of this paper is organized as follows. In Section \ref{sec:thm}, the main theorems for the compactly supported {SST}s in multidimensional spaces and their statistical properties are presented. In Section \ref{sec:numeric}, a few algorithms and their implementations are introduced in detail to improve the statistical stability of {SST}s. Several numerical examples with heavy noise are provided to demonstrate the proposed properties. We conclude this paper in Section \ref{sec:conclusion}.

\section{Theory for synchrosqueezed transforms ({SST}s)}
\label{sec:thm}
Let us briefly introduce the basics and assumptions of compactly supported {SST}s in Section \ref{subsec:comp} before discussing their statistical properties in Section \ref{subsec:stat}. While the synchrosqueezing technique can be applied to a wide range of time-frequency transforms, the discussion here is restricted to the framework of multidimensional wave packet transforms to save space. It is easy to extend these results to other transforms (see \cite{Thesis} for the example of the $2$D synchrosqueezed curvelet transform). Due to the space limitation, only the main ideas of the proofs are presented. Readers are referred to \cite{Thesis} for more details. 

\subsection{Compactly supported {SST}s}
\label{subsec:comp}
Previously, the synchrosqueezed wave packet transform (SSWPT) was built using mother wave packets compactly supported in the frequency domain. This paper studies a wider class of mother wave packets defined below.

\begin{definition}
An $n$-dimensional mother wave packet $w(x)\in C^m(\R^n)$ is of type $(\eps,m)$ for some $\eps\geq 0$, and some non-negative integer $m$, if $\widehat{w}(\xi)$ is a real-valued smooth function with a support that covers the ball $B_1(0)$ centered at the origin with a radius $1$ satisfying that:
\[
|\widehat{w}(\xi)|\leq \frac{\eps}{(1+|\xi|)^m},
\]
for $|\xi|>1$
and $|\widehat{w}(\xi)|>0$, for $|\xi|<1$.
\end{definition}
Since $w\in C^m(\R^n)$, the above decaying requirement is easy to satisfy. Actually, we can further assume $\widehat{w}(\xi)$ is essentially supported in a ball $B_d(0)$ with $d\in(0,1]$ to adapt signals with close instantaneous frequencies, i.e., $|\widehat{w}(\xi)|$ is approximately zero outside this support up to an $\epsilon$ truncation error. However, $d$ is just a constant in later asymptotic analysis. Hence, we omit this discussion and consider it as $1$ in the analysis but implement it in the numerical tool. Similarly to the discussion in \cite{1DSSWPT,SSWPT}, we can use this mother wave packet $w(x)$ to define a family of $n$-dimensional compactly supported wave packets 
  \[
  w_{a b}(x)=|a|^{ns/2} w\left(|a|^s(x-b)\right) e^{2\pi i (x-b)\cdot a},
  \]  
  through scaling, modulation, and translation, controlled by a geometric parameter $s$, for $a$, $b\in \R^n$. With this family of wave packets ready, we define the wave packet transform via
  \begin{align*}
    W_f(a,b) 
    &= \langle f,w_{a b}\rangle =  \int f(x) \overline{w_{a b}(x)}dx.
  \end{align*}

Previously in \cite{1DSSWPT,SSWPT}, the synchrosqueezed wave packet transform (SSWPT) was proposed to analyze a class of intrinsic mode type functions as defined below.

\begin{definition}
  \label{mSSWPT:def:IMTF}
  A function $f(x)=\alpha(x)e^{2\pi iN \phi(x)}$ in $\R^n$ is an intrinsic mode type
  function (IMT) of type $\left(M,N\right)$ if $\alpha(x)$ and $\phi(x)$ satisfy
  \begin{align*}
    \alpha(x)\in C^\infty, \quad |\grad \alpha(x)|\leq M, \quad 1/M \leq \alpha(x)\leq M, \\
    \phi(x)\in C^\infty,  \quad  1/M \leq |\grad \phi(x)|\leq M, \quad |\grad^2 \phi(x)|\leq M.
   \end{align*}
\end{definition}

It has been proved that the instantaneous frequency (or local wave vector when $n>1$) information function $v_f(a,b)=\frac{ \nabla_b W_f(a,b) }{ 2\pi i W_f(a,b)}$ of an IMT $f(x)$ can approximate $N\nabla \phi(b)$ if the mother wave packet $w(x)$ is of type $(0,\infty)$ and $N$ is sufficiently large. A careful inspection of previous proofs shows that the approximation $v_f(a,b) \approx N\nabla \phi(b)$ is still valid up to an $\epsilon$ relative error if the mother wave packet is of type $(\epsilon,m)$ for any positive integer. See Theorem 2.2.7 in \cite{Thesis} for a detailed proof. Hence, if we squeeze the coefficients $W_f(a,b)$ together based upon the same information function $v_f(a,b)$, then we would obtain a sharpened time-frequency representation of $f(x)$. This motivates the definition of the synchrosqueezed energy distribution 
  \begin{equation*}
    T_f(v,b) = \int_{\R^n} |W_f(a,b)|^2 \delta\left(\Re{v_f(a,b)}-v\right) da \label{eq:SED}
  \end{equation*}
  for $v,b\in \R^n$. Here $\delta$ denotes the Dirac delta function and $\Re v_f(a,b)$ means the real part of $v_f(a,b)$. 
  
For a multi-component signal $f(x)=\sum_{k=1}^K \alpha_k(x)e^{2\pi iN_k \phi_k(x)}$, the synchrosqueezed energy of each component will also concentrate around each  $N_k\nabla \phi_k(x)$ if these components satisfy the well-separation condition defined below.

\begin{definition}
  \label{mSSWPT:def:SWSIMC}
  A function $f(x)$ is a well-separated superposition of type
  $\left(M,N,K,s\right)$ if
  \[
  f(x)=\sum_{k=1}^K f_k(x),
  \] 
  where each $f_k(x)=\alpha_k(x)e^{2\pi iN_k \phi_k(x)}$ is an IMT of type $\left(M,N_k\right)$ with $N_k\geq N$ and the phase functions satisfy the
  separation condition: for any $(a,b)\in \R^{2n}$, there exists at most one $f_k$ satisfying that 
  \[
\left|a\right|^{-s}\left| a-N_k\grad\phi_k(b) \right|\leq 1.
  \]
  We denote by $F\left(M,N,K,s\right)$ the set of all
  such functions.
\end{definition}

In real applications, this well-separation condition might not be valid for a multi-component signal at every $x$. However, the SST will work wherever the well-separation condition is satisfied locally.

The key analysis of the SSWPT is how well the information function $v_f(a,b)$ approximates the instantaneous frequencies or local wave vectors. If the approximation is accurate enough, the synchrosqueezed energy distribution $T_f(a,b)$ gives a sharpened time-frequency representation of $f(x)$. 
We close this section with the following theorem that summarizes the main analysis of the $n$-dimensional SSWPT for a superposition of IMTs without noise or perturbation. 
In what follows, when we write $O\left(\cdot\right)$, $\lesssim$, or $\gtrsim$, the implicit constants may depend on $M$, $m$ and $K$. Readers are referred to Theorem 2.2.7 in \cite{Thesis} for the proof of Theorem \ref{thm:1d1} here.
\begin{theorem}
  \label{thm:1d1}
  Suppose the n-dimensional mother wave packet is of type $(\epsilon,m)$, for any fixed $\eps\in(0,1)$ and any fixed integer $m\geq 0$.
  For a function $f(x)$, we define
  \[
  R_{\eps} = \{(a,b): |W_f(a,b)|\geq |a|^{-ns/2}\sqrt \eps\},
  \]
    \[
  S_{\eps} = \{(a,b): |W_f(a,b)|\geq \sqrt \eps\},
  \]
  and 
  \[
  Z_{k} = \{(a,b): |a-N_k\grad  \phi_k(b)|\leq |a|^s \}
  \]
  for $1\le k\le K$. For fixed $M$, $m$, $K$, $s$, and $\eps$, there
  exists a constant $N_0\left(M,m,K,s,\eps\right)\simeq \max\left\{ \epsilon^{\frac{-2}{2s-1}},\epsilon^{\frac{-1}{1-s}} \right\}$ such that for any
  $N>N_0$ and $f(x)\in F\left(M,N,K,s\right)$ the following statements
  hold.
  \begin{enumerate}[(i)]
  \item $\{Z_{k}: 1\le k \le K\}$ are disjoint and $S_{\eps}\subset R_{\eps}
    \subset \bigcup_{1\le k \le K} Z_{k}$;
  \item For any $(a,b) \in R_{\eps} \cap Z_{k}$, 
    \[
    \frac{|v_f(a,b)-N_k\grad \phi_k(b)|}{ |N_k \grad \phi_k(b)|}\lesssim\sqrt \eps;
    \]
     \item For any $(a,b) \in S_{\eps} \cap Z_{k}$, 
    \[
    \frac{|v_f(a,b)-N_k\grad \phi_k(b)|}{ |N_k \grad \phi_k(b)|}\lesssim N_k^{-ns/2}\sqrt \eps.
    \]
  \end{enumerate}
  \end{theorem}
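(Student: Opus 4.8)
The plan is to follow the noiseless SSWPT analysis of \cite{1DSSWPT,SSWPT}, adapting it in two respects: the geometry is now $n$-dimensional, so the localization sets $Z_k$ live in $\R^{2n}$ and $\grad\phi_k$ is vector-valued, and the mother wave packet is only of type $(\eps,m)$, so $\widehat w$ is not exactly supported in $B_1(0)$ but merely decays like $\eps(1+|\xi|)^{-m}$ outside it; the latter is what makes $N_0$ depend on $\eps$. First I would record a single-component estimate. For one IMT $f_k=\alpha_k e^{2\pi iN_k\phi_k}$, the substitution $x=b+|a|^{-s}y$ writes $W_{f_k}(a,b)$ as $|a|^{-ns/2}$ times the oscillatory integral of $\alpha_k(b+|a|^{-s}y)e^{2\pi iN_k\phi_k(b+|a|^{-s}y)}$ against $\overline{w(y)}\,e^{-2\pi i|a|^{-s}y\cdot a}$. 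Freezing $\alpha_k$ at $b$ and linearizing $\phi_k$ at $b$ isolates the main term $|a|^{-ns/2}\alpha_k(b)e^{2\pi iN_k\phi_k(b)}\widehat w(\eta_k)$ with $\eta_k=|a|^{-s}\bigl(N_k\grad\phi_k(b)-a\bigr)$; the remainder is $O\bigl(|a|^{-ns/2}N_k^{1-2s}\bigr)$ on $Z_k$ (using $|\grad\alpha_k|\le M$, $|\grad^2\phi_k|\le M$, finiteness of $\int|y|^j|w(y)|\,dy$, and $|a|\sim N_k$ there) and is rapidly small off $Z_k$, since $|\widehat w(\eta_k)|\le\eps(1+|\eta_k|)^{-m}$ once $|\eta_k|>1$, supplemented by non-stationary phase when $|a|$ is far from $N_k|\grad\phi_k|$. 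Differentiating in $b$ and subtracting $2\pi iN_k\grad\phi_k(b)W_{f_k}$ leaves exactly
\[
\grad_bW_{f_k}-2\pi iN_k\grad\phi_k(b)W_{f_k}=|a|^{-ns/2}\!\int\!\bigl[\grad\alpha_k+2\pi iN_k(\grad\phi_k-\grad\phi_k(b))\alpha_k\bigr]e^{2\pi iN_k\phi_k}\,\overline{w(y)}\,e^{-2\pi i|a|^{-s}y\cdot a}\,dy
\]
(all functions evaluated at $b+|a|^{-s}y$ unless the argument is shown), which is $O\bigl(|a|^{-ns/2}N_k^{1-s}\bigr)$ because $|\grad\phi_k(b+|a|^{-s}y)-\grad\phi_k(b)|\lesssim M|a|^{-s}|y|$.

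For part (i), the sets $Z_k=\{(a,b):|a|^{-s}|a-N_k\grad\phi_k(b)|\le1\}$ are pairwise disjoint directly by the separation condition of Definition~\ref{mSSWPT:def:SWSIMC}. If $(a,b)\notin\bigcup_kZ_k$ then every $\eta_k$ has modulus $>1$, so $|\widehat w(\eta_k)|\le\eps$, and summing the single-component estimate over $k$ gives $|W_f(a,b)|\lesssim|a|^{-ns/2}\bigl(\eps+N^{1-2s}\bigr)<|a|^{-ns/2}\sqrt\eps$ once $N>N_0$ forces $N^{1-2s}$ below the relevant power of $\sqrt\eps$ --- the source of the $\eps^{-2/(2s-1)}$ branch of $N_0$ --- whence $(a,b)\notin R_\eps$, i.e.\ $R_\eps\subset\bigcup_kZ_k$. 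The same tail estimate gives $S_\eps\subset\bigcup_kZ_k$, on which $|a|\ge N/M-|a|^s\ge1$, so $|a|^{-ns/2}\le1$ and hence $S_\eps\subset R_\eps$.

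For parts (ii)--(iii), fix $(a,b)\in Z_k$. The separation condition makes $f_k$ the only active component, the remaining $f_j$ contributing $O(|a|^{-ns/2}\eps)$ to both $W_f$ and $\grad_bW_f$ through their tails, so the estimates of the first step hold with $f$ in place of $f_k$. Then
\[
v_f(a,b)-N_k\grad\phi_k(b)=\frac{\grad_bW_f(a,b)-2\pi iN_k\grad\phi_k(b)W_f(a,b)}{2\pi i\,W_f(a,b)},
\]
whose numerator is $O\bigl(|a|^{-ns/2}N_k^{1-s}\bigr)+O\bigl(|a|^{-ns/2}\eps\bigr)$ while the denominator exceeds $2\pi|a|^{-ns/2}\sqrt\eps$ on $R_\eps$ and $2\pi\sqrt\eps$ on $S_\eps$; dividing, then dividing by $|N_k\grad\phi_k(b)|\ge N_k/M$, and using $|a|\sim N_k$ produces the relative bounds $\lesssim\sqrt\eps$ on $R_\eps\cap Z_k$ and $\lesssim N_k^{-ns/2}\sqrt\eps$ on $S_\eps\cap Z_k$, provided $N>N_0$ absorbs the powers $N_k^{-s}$, $N_k^{-(1-s)}$ into $\sqrt\eps$ --- the source of the $\eps^{-1/(1-s)}$ branch.

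The delicate part is the bookkeeping near the boundary of $Z_k$, where $|\widehat w(\eta_k)|$ is comparable both to the $\eps$-tails and to the Taylor remainders, so the lower bounds $|W_f|\ge|a|^{-ns/2}\sqrt\eps$ and $|W_f|\ge\sqrt\eps$ are essentially tight; since $\grad_b$ amplifies the numerator by a factor $\sim N_k$ before the division, one must carefully balance the three competing scales --- the wave-packet type $\eps$, the asymptotic parameter $N$, and the geometric exponent $s\in(\tfrac12,1)$ --- so that every error term is genuinely $\lesssim\sqrt\eps$ (respectively $\lesssim N_k^{-ns/2}\sqrt\eps$). That balancing is precisely what pins down $N_0\simeq\max\{\eps^{-2/(2s-1)},\eps^{-1/(1-s)}\}$.
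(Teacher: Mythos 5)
Your plan is essentially the same argument as the paper's own (the paper gives no proof here, deferring to Theorem 2.2.7 of \cite{Thesis}): the change of variables $x=b+|a|^{-s}y$, the main term $\alpha_k(b)e^{2\pi iN_k\phi_k(b)}\widehat w(\eta_k)$ with Taylor/non-stationary-phase remainders of size $N_k^{1-2s}$ and $N_k^{1-s}$, disjointness of the $Z_k$ from the separation condition, the $(\eps,m)$ tail of $\widehat w$ for the inclusions, and the quotient bound for $v_f$ with the $R_\eps$ and $S_\eps$ lower bounds on $|W_f|$. The scale bookkeeping you describe is consistent with the stated $N_0\simeq\max\{\eps^{-2/(2s-1)},\eps^{-1/(1-s)}\}$, so this matches the referenced proof in both structure and content.
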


\subsection{Statistical Properties of {SST}s}
\label{subsec:stat}
Similarly to the noiseless case, we will analyze how well the information function $v_f(a,b)$ approximates instantaneous frequencies or local wave vectors in the case when a superposition of IMTs is contaminated by random noise. To simplify the discussion, we will sketch out the proofs in the one-dimensional case and refer the readers to \cite{Thesis} for higher dimensional cases.

Let us start with a simple case in which the superposition is perturbed slightly by a contaminant, Theorem \ref{thm:1d2} below shows that the information function $v_f(a,b)$ can approximate instantaneous frequencies with a reasonable error determined by the magnitude of the perturbation. 

\begin{theorem}
  \label{thm:1d2}
Suppose the mother wave packet is of type $(\eps,m)$, for any fixed $\eps\in(0,1)$ and any fixed integer $m\geq 0$.
  Suppose $g(x) = f(x) + e(x)$, where $e(x)\in L^\infty$ is a small error term that satisfies $\|e\|_{L^\infty} \leq \sqrt{\epsilon_1}$ for some $\eps_1>0$. For any $p\in \left(0,\frac{1}{2}\right]$, let $\delta=\sqrt{\epsilon}+\epsilon_1^{\frac{1}{2}-p}$. Define
  \[
  R_{\delta} = \{(a,b): |W_g(a,b)|\geq |a|^{-s/2} \delta\},
  \]
    \[
  S_{\delta} = \{(a,b): |W_g(a,b)|\geq  \delta\},
  \]
  and 
  \[
  Z_{k} = \{(a,b): |a-N_k\phi_k'(b)|\leq |a|^s \}
  \]
  for $1\le k\le K$.  
  For fixed $M$, $m$, $K$, $s$, and $\eps$, there
  exists a constant $N_0\left(M,m,K,s,\eps\right)\simeq \max \left\{  \epsilon^{\frac{-1}{2s-1}},\epsilon^{\frac{-1}{1-s}}\right\}$ such that for any
  $N>N_0\left(M,m,K,s,\eps\right)$ and $f(x)\in F\left(M,N,K,s\right)$ the following statements
  hold.
  \begin{enumerate}[(i)]
  \item $\{Z_k: 1\le k \le K\}$ are disjoint and $S_{\delta}\subset R_{\delta}
    \subset \bigcup_{1\le k \le K} Z_k$;
  \item For any $(a,b) \in R_{\delta} \cap Z_k$, 
    \[
    \frac{|v_g(a,b)-N_k\phi_k'(b)|}{ |N_k \phi_k'(b)|}\lesssim\sqrt \eps + \epsilon_1^p;
    \]
    \item For any $(a,b) \in S_{\delta} \cap Z_k$, 
    \[
    \frac{|v_g(a,b)-N_k\phi_k'(b)|}{ |N_k \phi_k'(b)|}\lesssim\frac{\sqrt \eps + \epsilon_1^p }{N_k^{s/2}}.
    \]
  \end{enumerate}
\end{theorem}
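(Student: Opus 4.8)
The plan is to reduce the whole statement to the noiseless Theorem~\ref{thm:1d1} by using linearity of the wave packet transform. Writing $g=f+e$ gives $W_g(a,b)=W_f(a,b)+W_e(a,b)$ and $\p_b W_g(a,b)=\p_b W_f(a,b)+\p_b W_e(a,b)$, so the quantity to be controlled factors as
\[
2\pi i\,W_g(a,b)\bigl(v_g(a,b)-N_k\phi_k'(b)\bigr)=\bigl(\p_b W_f-2\pi i N_k\phi_k'(b)W_f\bigr)+\bigl(\p_b W_e-2\pi i N_k\phi_k'(b)W_e\bigr),
\]
where the first bracket is exactly $2\pi i\,W_f\bigl(v_f-N_k\phi_k'(b)\bigr)$ and is already estimated by Theorem~\ref{thm:1d1}. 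Two further ingredients are then needed. The first is pointwise control of the perturbation terms from $L^1$--$L^\infty$ duality: $|W_e(a,b)|\le\|e\|_{L^\infty}\|w_{ab}\|_{L^1}\lesssim\sqrt{\eps_1}\,|a|^{-s/2}$ since $\|w_{ab}\|_{L^1}=|a|^{-s/2}\|w\|_{L^1}$, and, after differentiating $w_{ab}$ in $b$ into a term of size $|a|^{s}w'(|a|^s(x-b))$ and a term of size $2\pi|a|\,w(|a|^s(x-b))$ (times unimodular factors), $\|\p_b w_{ab}\|_{L^1}\lesssim|a|^{1-s/2}$ for $|a|\gtrsim1$, hence $|\p_b W_e(a,b)|\lesssim\sqrt{\eps_1}\,|a|^{1-s/2}$. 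The second is the arithmetic observation that, since $p\in(0,\tfrac{1}{2}]$, one has $\sqrt{\eps_1}=\eps_1^{\,p}\eps_1^{\,1/2-p}\le\eps_1^{\,1/2-p}\le\delta$, which is what makes the thresholds defining $R_\delta,S_\delta$ compatible with those of $R_\eps,S_\eps$.

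For part~(i): disjointness of the $Z_k$ is immediate, because $(a,b)\in Z_k$ is precisely the separation condition $|a|^{-s}|a-N_k\phi_k'(b)|\le1$, which holds for at most one $k$; and $S_\delta\subset R_\delta$ since the relevant range has $|a|\gtrsim N$ and thus $|a|^{-s/2}<1$. If $(a,b)\notin\bigcup_k Z_k$, then Theorem~\ref{thm:1d1}(i) gives $|W_f(a,b)|<|a|^{-s/2}\sqrt\eps$, so $|W_g(a,b)|\le|W_f(a,b)|+|W_e(a,b)|<|a|^{-s/2}(\sqrt\eps+C\sqrt{\eps_1})\le|a|^{-s/2}\delta$ and $(a,b)\notin R_\delta$, which is $R_\delta\subset\bigcup_k Z_k$. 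The same computation records, for use below, that on $R_\delta\cap Z_k$ one has $|a|\simeq N_k\simeq|N_k\phi_k'(b)|$ (from $|a-N_k\phi_k'(b)|\le|a|^s$, $1/M\le|\phi_k'|\le M$, and $N>N_0$).

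For parts~(ii) and~(iii): fix $(a,b)\in R_\delta\cap Z_k$ (respectively $(a,b)\in S_\delta\cap Z_k$). Then $|W_f|\ge|W_g|-|W_e|\ge|a|^{-s/2}(\delta-C\sqrt{\eps_1})\ge|a|^{-s/2}\sqrt\eps$ (respectively $|W_f|\ge\delta-C\sqrt{\eps_1}\ge\sqrt\eps$), so $(a,b)\in R_\eps\cap Z_k$ (respectively $S_\eps\cap Z_k$) and Theorem~\ref{thm:1d1} applies; moreover $|W_e|\lesssim\eps_1^{\,p}|W_g|\lesssim|W_g|$, so $|W_f|\lesssim|W_g|$. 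In the factorization above, the first bracket equals $2\pi|W_f|\,|v_f-N_k\phi_k'|\lesssim|W_f|\sqrt\eps\,|a|$ by Theorem~\ref{thm:1d1}(ii) together with $|N_k\phi_k'|\simeq|a|$ (respectively $\lesssim|W_f|\sqrt\eps\,|a|^{1-s/2}$ by Theorem~\ref{thm:1d1}(iii)), and the second bracket is $\lesssim\sqrt{\eps_1}\,|a|^{1-s/2}$ by the perturbation bounds and $|N_k\phi_k'|\lesssim|a|$. Dividing by $2\pi|W_g|\ge2\pi|a|^{-s/2}\delta$ (respectively $\ge2\pi\delta$), inserting $|W_f|\lesssim|W_g|$ and $\sqrt{\eps_1}/\delta\le\eps_1^{\,p}$, and dividing once more by $|N_k\phi_k'(b)|\simeq|a|$, gives the asserted relative errors $\lesssim\sqrt\eps+\eps_1^{\,p}$ and $\lesssim(\sqrt\eps+\eps_1^{\,p})\,N_k^{-s/2}$.

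The only genuinely non-elementary input is Theorem~\ref{thm:1d1}, used verbatim as a black box; everything else is $L^1$--$L^\infty$ duality, the factorization identity, and exponent arithmetic. I expect the main delicate point to be the constant bookkeeping in part~(i): the clean formula $\delta=\sqrt\eps+\eps_1^{1/2-p}$ must absorb the $L^1$-norm of the fixed mother wave packet appearing in the bound for $W_e$, so the argument should be phrased for $\eps_1$ below a threshold depending only on that wave packet (and, as in the noiseless theorem, for $a$ in the relevant band away from $0$) --- harmless, since the perturbative regime is exactly $\eps_1$ small. Finally, the parameter $p$ encodes a transparent trade-off: raising $p$ improves the error to $\eps_1^{\,p}$ but, through the larger threshold $\eps_1^{1/2-p}$ in $R_\delta$ and $S_\delta$, shrinks the time-frequency region on which the bound is guaranteed.
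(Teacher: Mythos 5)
Your proposal is correct and takes essentially the same route as the paper's proof: the same $L^1$--$L^\infty$ bounds $|W_e|\lesssim\sqrt{\epsilon_1}\,|a|^{-s/2}$, $|\partial_b W_e|\lesssim\sqrt{\epsilon_1}\,|a|^{1-s/2}$, the same reduction of (i) to Theorem~\ref{thm:1d1} via the lower bound $|W_f|\geq|a|^{-s/2}\sqrt{\epsilon}$ on $R_\delta$, and the same triangle-inequality control of the information function for (ii)--(iii), with your numerator-splitting identity being only a cosmetic reorganization of the paper's bound on $v_g-v_f$. The small-$\epsilon_1$/constant caveat you flag is likewise glossed over in the paper's sketch, so it does not constitute a divergence.
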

We introduce the parameter $p$ to clarify the relation among the perturbation, the threshold and the accuracy for better understanding the influence of perturbation or noise. For the same purpose, a parameter $q$ will be introduced in the coming theorems. If the threshold $\delta$ is larger, e.g., $\delta\geq \sqrt{\frac{\epsilon_1}{\epsilon}}$, the relative estimate errors in $\left(ii\right)$ and $\left(iii\right)$ are bounded by $\sqrt{\epsilon}$ and $\frac{\sqrt{\epsilon}}{N_k^{s/2}}$, respectively. Similarly, one can show that the information function computed from the wave packet coefficient with a larger magnitude can better approximate the instantaneous frequency.

Below is a sketch of the proof of Theorem \ref{thm:1d2}. See the proof of Theorem 3.2.1 in \cite{Thesis} for a detailed proof.
\begin{proof} We only need to discuss the case when $a>0$. By the definition of the wave packet transform, we have
\begin{equation}
\label{Robust:eqn:1d21}
|W_e(a,b)|\lesssim  \sqrt{\epsilon_1}a^{-s/2}\quad \text{ and } \quad |\partial_b W_e(a,b)|\lesssim\sqrt{\epsilon_1}\left(a^{1-s/2}+a^{s/2}\right).
\end{equation}
If $(a,b)\in R_{\delta}$, then $|W_g(a,b)|\geq a^{-s/2}\delta$. Together with Equation \eqref{Robust:eqn:1d21}, it holds that 
\begin{equation}
\label{Robust:eqn:1d23}
|W_f(a,b)|\geq |W_g(a,b)|-|W_e(a,b)|\geq a^{-s/2}\left(\delta-\sqrt{\epsilon_1}\right)\geq a^{-s/2}\sqrt{\epsilon}.
\end{equation}
Hence, $S_{\delta}\subset R_{\delta}\subset R_{\epsilon}$, where $R_{\epsilon}$ is defined in Theorem \ref{thm:1d1} and is a subset of $\bigcup_{1\le k \le K} Z_k$. So, $\left(i\right)$ is true by Theorem \ref{thm:1d1}.

Since $R_{\delta}\subset R_{\epsilon}$, $(a,b)\in R_{\delta}\cap Z_k$ implies $(a,b)\in R_{\epsilon}\cap Z_k$. Hence, by Theorem \ref{thm:1d1}, it holds that
\begin{equation}
\label{Robust:eqn:1d24}
 \frac{|v_f(a,b)-N_k\phi_k'(b)|}{ |N_k \phi_k'(b)|}\lesssim\sqrt \eps,
\end{equation}
when $N$ is larger than a constant $N_0\left(M,m,K,s,\eps\right)\simeq \max \left\{  \epsilon^{\frac{-1}{2s-1}},\epsilon^{\frac{-1}{1-s}}\right\}$. Notice that $(a,b)\in Z_k$ implies $a\simeq N_k$. Hence, by Equation \eqref{Robust:eqn:1d21} to \eqref{Robust:eqn:1d24},
\begin{equation*}
\frac{|v_g(a,b)-N_k\phi_k'(b)|}{ |N_k \phi_k'(b)|}\leq \frac{|v_f(a,b)-N_k\phi_k'(b)|}{ |N_k \phi_k'(b)|}+\frac{|\frac{\partial_bW_f(a,b)}{2\pi iW_f(a,b)}-\frac{\partial_bW_g(a,b)}{2\pi iW_g(a,b)}|}{ |N_k \phi_k'(b)|}\leq \sqrt{\epsilon}+\epsilon_1^p,
\end{equation*}
when $N>N_0$. Hence, $\left(ii\right)$ is proved. The proof of $\left(iii\right)$ is similar. 
\end{proof}

Next, we will discuss the case when the contamination $e$ is a random perturbation. \cite{Gaussian2,Gaussian1,Gaussian3,Gaussian5,Gaussian4} are referred to for basic facts about generalized random fields and complex Gaussian processes. To warm up, we start with additive white Gaussian process in Theorem \ref{thm:1d3} and extend it to a generalized zero mean stationary Gaussian process in Theorem \ref{thm:1d4}. We assume that $e$ has an explicit power spectral function denoted by $\widehat{e}(\xi)$. $\|\cdot\|$ represents the $L^2$ norm and $\langle \cdot,\cdot\rangle$ is the standard inner product.

\begin{theorem}
  \label{thm:1d3}
  Suppose the mother wave packet is of type $(\eps,m)$, for any fixed $\eps\in(0,1)$ and any fixed integer $m\geq \frac{2}{1-s}+4$.  
  Suppose $g(x) = f(x) + e$, where $e$ is zero mean white Gaussian process with a variance $\epsilon_1^{1+q}$ for some $q>0$ and some $\eps_1>0$. For any $p\in \left(0,\frac{1}{2}\right]$, let $\delta=\sqrt{\epsilon}+\epsilon_1^{\frac{1}{2}-p}$. Define
  \[
  R_{\delta} = \{(a,b): |W_g(a,b)|\geq a^{-s/2} \delta\},
  \]
    \[
  S_{\delta} = \{(a,b): |W_g(a,b)|\geq  \delta\},
  \]
  and 
  \[
  Z_{k} = \{(a,b): |a-N_k\phi_k'(b)|\leq a^s \}
  \]
  for $1\le k\le K$. 
  For fixed $M$, $m$, $K$, $s$, and $\eps$, there
  exists a constant $N_0\left(M,m,K,s,\eps\right)\simeq \max \left\{  \epsilon^{\frac{-1}{2s-1}},\epsilon^{\frac{-1}{1-s}}\right\}$ such that for any
  $N>N_0\left(M,m,K,s,\eps\right)$ and $f(x)\in F\left(M,N,K,s\right)$ the following statements
  hold.
  \begin{enumerate}[(i)]
  \item $\{Z_k: 1\le k \le K\}$ are disjoint. 
  \item If $(a,b)\in R_{\delta}$, then $(a,b)\in \bigcup_{1\le k \le K} Z_k$ with a probability at least \[1-e^{-O\left(N_k^{-s}\epsilon_1^{-q}\right)} + O\left( \frac{\eps}{N_k^{m(1-s)}}\right).\]
  \item If $(a,b)\in S_{\delta}$, then $(a,b)\in \bigcup_{1\le k \le K} Z_k$ with a probability at least \[1-e^{-\epsilon_1^{-q}\|w\|^{-2}} + O\left( \frac{\eps}{N_k^{m(1-s)}}\right).\]
  \item If $(a,b) \in R_{\delta} \cap Z_k$ for some $k$, then 
    \[
    \frac{|v_g(a,b)-N_k\phi_k'(b)|}{ |N_k \phi_k'(b)|}\lesssim\sqrt \eps + \epsilon_1^p
    \]
    is true with a probability at least \[\left(1-e^{-O\left(N_k^{2-3s}\epsilon_1^{-q}\right)}\right)\left(1-e^{-O\left(N_k^{-s-2}\epsilon_1^{-q}\right)}\right) +O\left( \frac{\eps}{N_k^{(m-4)(1-s)-2}}\right).\]
    \item If $(a,b) \in S_{\delta} \cap Z_k$, then
    \[
    \frac{|v_g(a,b)-N_k\phi_k'(b)|}{ |N_k \phi_k'(b)|}\lesssim\frac{\sqrt \eps + \epsilon_1^p }{N_k^{s/2}}
    \]
    is true with a probability at least \[\left(1-e^{-O\left(N_k^{2-2s}\epsilon_1^{-q}\right)}\right)\left(1-e^{-O\left(N_k^{-2}\epsilon_1^{-q}\right)}\right) + O\left( \frac{\eps}{N_k^{(m-4)(1-s)-2}}\right).\] 
  \end{enumerate}
\end{theorem}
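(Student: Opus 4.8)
The plan is to reduce Theorem~\ref{thm:1d3} to two ingredients: the deterministic Theorem~\ref{thm:1d1}, and Gaussian concentration for a pair of scalar complex random variables. As in the proof of Theorem~\ref{thm:1d2} I would work with $a>0$. Besides $W_e(a,b)=\langle e,w_{ab}\rangle$, I would introduce $W^{(1)}_e(a,b)=\langle e,(w')_{ab}\rangle$, the wave packet transform of $e$ against the mother wave packet $w'$. Differentiating $w_{ab}$ in $b$ gives $\partial_b w_{ab}(x)=-|a|^s(w')_{ab}(x)-2\pi i a\,w_{ab}(x)$, hence
\[
\partial_b W_e(a,b)=-|a|^s W^{(1)}_e(a,b)+2\pi i a\,W_e(a,b),
\]
which is exactly the splitting behind the deterministic bound \eqref{Robust:eqn:1d21}. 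Since $e$ is a zero mean white Gaussian process of variance $\epsilon_1^{1+q}$, both $W_e(a,b)$ and $W^{(1)}_e(a,b)$ are complex Gaussian with $\E|W_e(a,b)|^2=\epsilon_1^{1+q}\|w_{ab}\|^2=\epsilon_1^{1+q}\|w\|^2$ and $\E|W^{(1)}_e(a,b)|^2=\epsilon_1^{1+q}\|w'\|^2$; the substitution $y=|a|^s(x-b)$ makes both norms $a$-independent, and both are $O(1)$ since $w$ is of type $(\epsilon,m)$. Moreover $\langle w,w'\rangle$ is purely imaginary and vanishes for the symmetric choice of $\widehat w$, so $W_e(a,b)$ and $W^{(1)}_e(a,b)$ are uncorrelated, hence independent as jointly Gaussian variables; each is asymptotically circularly symmetric for large $a$, so I would use $\Pr(|W_e(a,b)|>t)\lesssim e^{-t^2/(\epsilon_1^{1+q}\|w\|^2)}$ and the analogous bound for $W^{(1)}_e(a,b)$. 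The hypothesis $m\ge\frac{2}{1-s}+4$ provides enough frequency decay that the off-band leakage of an IMT (and its $b$-derivative) through the tail of $\widehat w$ is negligible below.

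Statement (i) is deterministic: it is the disjointness part of Theorem~\ref{thm:1d1}(i), valid once $N>N_0$. For (ii)--(iii) I would mimic the reduction in the proof of Theorem~\ref{thm:1d2}. If $(a,b)\notin\bigcup_kZ_k$, then Theorem~\ref{thm:1d1}(i) forces $|W_f(a,b)|<a^{-s/2}\sqrt\epsilon$ (for the $R_\delta$ statement) or $|W_f(a,b)|<\sqrt\epsilon$ (for the $S_\delta$ statement); hence the bad event ``$(a,b)\in R_\delta$ (resp.\ $S_\delta$) while $(a,b)\notin\bigcup_kZ_k$'' is deterministically contained in $\{|W_e(a,b)|>a^{-s/2}(\delta-\sqrt\epsilon)\}=\{|W_e(a,b)|>a^{-s/2}\epsilon_1^{1/2-p}\}$ (resp.\ $\{|W_e(a,b)|>\epsilon_1^{1/2-p}\}$). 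Inserting the corresponding threshold into the Gaussian tail bound and using $a\simeq N_k$ on $Z_k$ yields the factor $e^{-O(N_k^{-s}\epsilon_1^{-q})}$ (resp.\ $e^{-\epsilon_1^{-q}\|w\|^{-2}}$); the residual $O(\epsilon N_k^{-m(1-s)})$ records the quantitative form of the $\epsilon$-leakage already contained in Theorem~\ref{thm:1d1}.

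For (iv)--(v) I would start from the identity obtained by writing $W_g=W_f+W_e$, $\partial_bW_g=\partial_bW_f+\partial_bW_e$ over a common denominator and substituting the splitting of $\partial_bW_e$:
\[
v_g(a,b)-v_f(a,b)=-\frac{|a|^s W^{(1)}_e(a,b)}{2\pi i\,W_g(a,b)}+\big(a-v_f(a,b)\big)\frac{W_e(a,b)}{W_g(a,b)}.
\]
On $R_\delta\cap Z_k$ (resp.\ $S_\delta\cap Z_k$) the denominator satisfies $|W_g(a,b)|\ge a^{-s/2}\delta\ge a^{-s/2}\epsilon_1^{1/2-p}$ (resp.\ $|W_g(a,b)|\ge\delta\ge\epsilon_1^{1/2-p}$); Theorem~\ref{thm:1d1}(ii)--(iii) give $|a-v_f(a,b)|\lesssim a^s+\sqrt\epsilon\,|N_k\phi_k'(b)|$ and allow $v_f$ to be replaced by $N_k\phi_k'(b)$ up to relative error $\sqrt\epsilon$ (resp.\ $N_k^{-s/2}\sqrt\epsilon$), after one checks $|W_e(a,b)|\le a^{-s/2}\epsilon_1^{1/2-p}$ so that Theorem~\ref{thm:1d1} applies to $W_f$. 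Then the claimed bound on $\frac{|v_g-N_k\phi_k'|}{|N_k\phi_k'|}$ holds as soon as both $\{|W_e(a,b)|\lesssim a^{-s/2}\sqrt{\epsilon_1}\}$ and $\{|W^{(1)}_e(a,b)|\lesssim a^{1-3s/2}\sqrt{\epsilon_1}\}$ hold, with thresholds tightened by $a^{s/2}$ on $S_\delta$ to match the sharper conclusion there. Since these two events depend on the independent variables $W_e(a,b)$ and $W^{(1)}_e(a,b)$, they hold simultaneously with probability at least a product $\big(1-e^{-O(N_k^{\alpha}\epsilon_1^{-q})}\big)\big(1-e^{-O(N_k^{\beta}\epsilon_1^{-q})}\big)$; the exponents $\alpha,\beta$ are read off from the distinct powers of $a\simeq N_k$ multiplying $W_e$ and $W^{(1)}_e$ in the identity and in the lower bound for $|W_g|$, which is how one lands on the $N_k^{2-3s}$, $N_k^{2-2s}$, $N_k^{-s-2}$, $N_k^{-2}$ in the statement. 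The additive $O(\epsilon N_k^{-(m-4)(1-s)-2})$ again absorbs the frequency-tail leakage (one order deeper because of the $\partial_b$) and the error from Theorem~\ref{thm:1d1}, and $m\ge\frac{2}{1-s}+4$ is exactly what forces this term to decay in $N_k$.

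I expect the main obstacle to be the bookkeeping in (iv)--(v). Because $\partial_bW_e$ is the sum of a ``slow'' part $2\pi i a\,W_e$ (variance $\sim\epsilon_1^{1+q}$, but carrying a factor $a\simeq N_k$) and a ``fast'' part $-|a|^sW^{(1)}_e$ (variance $\sim\epsilon_1^{1+q}$, carrying $a^s$), the two need different thresholds, produce different powers of $N_k$, and interact differently with the lower bound on $|W_g|$ and with the $\sqrt\epsilon$ (or $N_k^{-s/2}\sqrt\epsilon$) relative error inherited from Theorem~\ref{thm:1d1}; keeping all of this aligned while retaining the independence that upgrades $\Pr(A\cap B)$ to $\Pr(A)\Pr(B)$ is the delicate part. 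A secondary, more conceptual issue is that ``$(a,b)\in R_\delta$'' and ``$(a,b)\in S_\delta$'' are themselves random events, so one cannot condition on them directly; the device used throughout is that every bad event is deterministically contained in an event of the pure form $\{|W_e|>\text{threshold}\}$ or $\{|W^{(1)}_e|>\text{threshold}\}$, to which the Gaussian estimates apply verbatim. Finally, I would verify at the outset that $W_e(a,b)$ and $\partial_bW_e(a,b)$ are genuine Gaussian random variables with the stated second moments even though $e$ is only a generalized process — this is where the $C^m$ smoothness and decay built into a type-$(\epsilon,m)$ mother wave packet are used.
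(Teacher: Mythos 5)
Your skeleton matches the paper's first step: reduce to the deterministic Theorem~\ref{thm:1d2}/\ref{thm:1d1}, contain every bad event in a pure event of the form $\{|W_e|>\text{threshold}\}$ or $\{|\partial_b W_e|>\text{threshold}\}$, and then apply Gaussian tail bounds; your identity for $v_g-v_f$ and the threshold bookkeeping in (iv)--(v) are sound (in fact your split gives $N_k^{-s}$ where the paper's diagonalization gives the weaker $N_k^{-s-2}$, which is fine since these are lower bounds). But there are two genuine gaps. First, the independence of $W_e(a,b)$ and $W^{(1)}_e(a,b)$, on which your product bound $P(A\cap B)=P(A)P(B)$ rests, is not available under the theorem's hypotheses: their cross-covariance is $\epsilon_1^{1+q}\langle w,w'\rangle=-2\pi i\,\epsilon_1^{1+q}\int \xi|\widehat w(\xi)|^2\,d\xi$, which is purely imaginary but has no reason to vanish, since a type-$(\eps,m)$ mother wave packet is only required to have $\widehat w$ real-valued and positive on $B_1(0)$, not even. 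Your parenthetical ``for the symmetric choice of $\widehat w$'' is an extra assumption not in the statement. The paper avoids this by keeping the correlated pair $(W_e,\partial_b W_e)$ with covariance $\epsilon_1^{1+q}V$, diagonalizing $V^{-1}=U^*DU$ with $D_{11}\simeq a^{2(1-s)}$, $D_{22}\simeq a^{-2}$, and extracting the product lower bound from the change of variables; a union bound would only give a sum-type bound, not the asserted product, so this step needs to be repaired along those lines. Relatedly, even the single-variable tail $P(|W_e|>t)=e^{-t^2/(\epsilon_1^{1+q}\|w\|^2)}$ requires circular symmetry, i.e.\ vanishing pseudo-covariance $\E[W_e^2]$, which holds exactly only when $\widehat w$ is compactly supported in $B_1(0)$.

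Second, and more substantially, the theorem is stated for wave packets of type $(\eps,m)$ with $m\geq\frac{2}{1-s}+4$, and the additive terms $O(\eps/N_k^{m(1-s)})$ in (ii)--(iii) and $O(\eps/N_k^{(m-4)(1-s)-2})$ in (iv)--(v) are precisely the quantitative price of the non-circularity caused by the frequency tails. Your proposal only asserts that $W_e$, $W^{(1)}_e$ are ``asymptotically circularly symmetric'' and that the additive term ``absorbs the leakage,'' but never derives these exponents. The paper's proof spends its entire second step on this: the pseudo-covariances are bounded by $O(\epsilon_1^{1+q}\eps\,a^{-m(1-s)})$, the augmented covariance matrix of $(W_e,\partial_b W_e,W_e^*,\partial_b W_e^*)$ is inverted and shown to be block-diagonal up to a perturbation of $2$-norm $O(\epsilon_1^{-(1+q)}\eps\,a^{(m-4)(s-1)})$, the determinant is expanded, and the perturbed joint density is integrated over the polydisc events $G_1\cap G_3$, $G_2\cap G_3$ to produce exactly the correction $O(\eps/a^{(m-4)(1-s)-2})$ (and $O(\eps/a^{m(1-s)})$ for the one-variable events); the hypothesis $m\geq\frac{2}{1-s}+4$, equivalently $\frac{m-6}{m-4}\geq s$, is what makes this perturbation analysis close. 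Without carrying out this density-perturbation argument (or an equivalent), the probability bounds as stated are not established for the non-compactly-supported mother wave packets the theorem allows.
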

We only sketch the proof of the above theorem. See the proof of Theorem 3.2.2 in \cite{Thesis} for a long proof.
\begin{proof}
Step $1$: we prove this theorem when the mother wave packet is of type $(0,m)$ first, i.e., compactly supported in the frequency domain. 

Since $w_{ab}$ and $\partial_b w_{ab}$ are in $L^1\cap C^{m-1}$, $W_e(a,b)$ and $\partial_b W_e(a,b)$ are Gaussian variables. Hence, $W_g(a,b)=W_f(a,b)+W_e(a,b)$ and $\partial_b W_g(a,b)=\partial_b W_f(a,b)+\partial_b W_e(a,b)$ can be understood as Gaussian variables. Furthermore, $W_e(a,b)$ and $\left(W_e(a,b),\partial_b W_e(a,b)\right)$ are circularly symmetric Gaussian variables by checking that their pseudo-covariance matrices are zero. Therefore, the distribution of $W_e(a,b)$ is determined by its variance 
\[
\frac{e^{-\epsilon_1^{-\left(1+q\right)}|z_1|^2\|w\|^{-2}}}{\pi\epsilon_1^{1+q}\|w\|^2}.
\] 
If we define
\begin{equation*}
V=
\begin{pmatrix}
\|\widehat{w}\|^2 & \langle \widehat{w_{ab}},2\pi i\xi\widehat{ w_{ab}}\rangle\\
\langle 2\pi i\xi\widehat{ w_{ab}},\widehat{w_{ab}}\rangle & \langle 2\pi i\xi\widehat{w_{ab}},2\pi i\xi\widehat{ w_{ab}}\rangle
\end{pmatrix},
\end{equation*}
then $\epsilon_1^{1+q}V$ is the covariance matrix of $\left(W_e(a,b),\partial_b W_e(a,b)\right)$ and its distribution is described by the joint probability density
\[
\frac{e^{-\epsilon_1^{-\left(1+q\right)} z^{*} V^{-1}z  }  }{\pi^2\epsilon_1^{2\left(1+q\right)} \det V},
\] 
where $z = \left(z_1,z_2\right)^T$, $T$ and $*$ denote the transpose operator and conjugate transpose operator. $V$ is an invertible and self-adjoint matrix, since $W_e(a,b)$ and $\partial_b W_e(a,b)$ are linearly independent. Hence, there exist a diagonal matrix $D$ and a unitary matrix $U$ such that $V^{-1}=U^*DU$.

Part $\left(i\right)$ is true by previous theorems. Define the following events
\[
G_1=\left\{|W_e(a,b)|<a^{-s/2}\sqrt{\epsilon_1}\right\},
\]
\[
G_2=\left\{|W_e(a,b)|<\sqrt{\epsilon_1}\right\},
\]
\[
G_3 = \left\{|\partial_b W_e(a,b)|<\sqrt{\epsilon_1}\left(a^{1-s/2}+a^{s/2}\right)\right\},
\]
\[
  H_k=\left\{ \frac{|v_g(a,b)-N_k\phi_k'(b)|}{|N_k\phi_k'(b)|}\lesssim \sqrt{\epsilon} + \epsilon_1^p \right\},
  \]
  and
  \[
  J_k=\left\{ \frac{|v_g(a,b)-N_k\phi_k'(b)|}{|N_k\phi_k'(b)|}\lesssim \frac{\sqrt{\epsilon} + \epsilon_1^p }{N_k^{s/2}}\right\},
  \]
  for $1\leq k\leq K$. To conclude Part $\left(ii\right)$ to $\left(v\right)$, we need to estimate the probability $P\left(G_1\right)$, $P\left(G_2\right)$, $P\left(G_1\cap G_3\right)$, $P\left(G_2\cap G_3\right)$, $P\left(H_k\right)$ and $P\left(J_k\right)$.  Algebraic calculations show that 
 \begin{equation*}
 P\left(G_1\right)=1-e^{-a^{-s}\epsilon_1^{-q}\|w\|^{-2}} \quad\text{ and }\quad  P\left(G_2\right)=1-e^{-\epsilon_1^{-q}\|w\|^{-2}}.
 \end{equation*}
 
We are ready to summarize and conclude $\left(ii\right)$ and $\left(iii\right)$. If $(a,b)\in R_\delta$, then 
  \begin{equation}
  \label{Robust:eqn:1d31}
  |W_e(a,b)+W_f(a,b)|\geq a^{-s/2}\left(\epsilon_1^{1/2-p}+\sqrt{\epsilon}\right).
  \end{equation}
  If $(a,b)\notin \bigcup_{1\le k \le K} Z_k$, then 
  \begin{equation}
  \label{Robust:eqn:1d32}
  |W_f(a,b)|\leq a^{-s/2}\epsilon.
  \end{equation}
Equation \eqref{Robust:eqn:1d31} and \eqref{Robust:eqn:1d32} lead to $|W_e(a,b)|\geq a^{-s/2}\sqrt{\epsilon_1}$. Hence,  
\begin{equation*}
P\left( (a,b)\notin\bigcup_{1\le k \le K} Z_k\right)\leq P\left(|W_e(a,b)|\geq a^{-s/2}\sqrt{\epsilon_1} \right)= 1-P\left(G_1\right).
\end{equation*}
This means that if $(a,b)\in R_\delta$, then $(a,b)\in \bigcup_{1\le k \le K}Z_k$ with a probability at least $P\left(G_1\right)=1-e^{-a^{-s}\epsilon_1^{-q}\|w\|^{-2}}=1-e^{-O\left(N_k^{-s}\epsilon_1^{-q}\right)}$, since $a\simeq N_k$ if $(a,b)\in Z_k$. So, $\left(ii\right)$ is true. 
A similar argument applied to $(a,b)\in S_\delta$ shows that $(a,b)\in \bigcup_{1\le k \le K}Z_k$ with a probability at least $P\left(G_2\right)=1-e^{-\epsilon_1^{-q}\|w\|^{-2}}$.
Hence, $\left(iii\right)$ is proved. 
 
Recall that $V^{-1}=U^*DU$. By the change of variables $z'=Uz$, we can show that
  \begin{equation*}
  P\left(G_1\cap G_3\right) \geq \left(1-e^{-\frac{D_{11}d_1^2}{\epsilon_1^{1+q}}}\right)\left(1-e^{-\frac{D_{22}d_1^2}{\epsilon_1^{1+q}}}\right),
  \end{equation*}
  and 
    \begin{eqnarray*}
  P\left(G_2\cap G_3\right)  \geq \left(1-e^{-\frac{D_{11}d_2^2}{\epsilon_1^{1+q}}}\right)\left(1-e^{-\frac{D_{22}d_2^2}{\epsilon_1^{1+q}}}\right).
  \end{eqnarray*}
  We can further estimate that $D_{11}\simeq a^{2\left(1-s\right)}$ and $D_{22}\simeq a^{-2}$. Therefore,
  \[
    P\left(G_1\cap G_3\right) \geq \left(1-e^{-O\left(a^{2-3s}\epsilon_1^{-q}\right)}\right)\left(1-e^{-O\left(a^{-s-2}\epsilon_1^{-q}\right)}\right),
    \]
    and
    \[
     P\left(G_2\cap G_3\right) \geq \left(1-e^{-O\left(a^{2-2s}\epsilon_1^{-q}\right)}\right)\left(1-e^{-O\left(a^{-2}\epsilon_1^{-q}\right)}\right).
  \]
  By Theorem \ref{thm:1d2}, if $(a,b)\in R_\delta\cap Z_k$ for some $k$, then
  \[
  P\left(H_k\right)\geq P\left(H_k|G_1\cap G_3\right)P\left(G_1\cap G_3\right)= P\left(G_1\cap G_3\right) \geq \left(1-e^{-O\left(a^{2-3s}\epsilon_1^{-q}\right)}\right)\left(1-e^{-O\left(a^{-s-2}\epsilon_1^{-q}\right)}\right).
  \]
  Note that $a\simeq N_k$ when $a\in Z_k$, then 
   \[
  P\left(H_k\right)\geq  \left(1-e^{-O\left(N_k^{2-3s}\epsilon_1^{-q}\right)}\right)\left(1-e^{-O\left(N_k^{-s-2}\epsilon_1^{-q}\right)}\right).
  \]
  Similarly, if $(a,b)\in S_\delta\cap Z_k$ for some $k$, then
  \[
    P\left(J_k\right)\geq P\left(J_k|G_2\cap G_3\right)P\left(G_2\cap G_3\right)= P\left(G_2\cap G_3\right) \geq \left(1-e^{-O\left(N_k^{2-2s}\epsilon_1^{-q}\right)}\right)\left(1-e^{-O\left(N_k^{-2}\epsilon_1^{-q}\right)}\right).
  \]
  These arguments prove $\left(iv\right)$ and $\left(v\right)$.
  
  Step $2$: we  go on to prove this theorem when the mother wave packet is of type $(\eps,m)$ with $m\geq \frac{2}{1-s}+4$. We would like to emphasize that the requirement is crucial to the following asymptotic analysis and it keeps the error caused by the non-compact support of $\widehat{w}$ reasonably small.
   
  The sketch of the proof is similar to the first step. $W_e(a,b)$ and $\left(W_e(a,b),\partial_b W_e(a,b)\right)$ are still Gaussian variables but in general not circularly symmetric, because they would not have zero pseudo-covariance matrices. Suppose they have covariance matrices $C_1$ and $C_2$, pseudo-covariance matrices $P_1$ and $P_2$, respectively. We can still check that they have zero mean, $C_1=\eps_1^{1+q}\|w\|^2$ and $C_2=\eps_1^{1+q}V$, where $V$ is defined in the first step. By the definition of the mother wave packet of type $(\eps,m)$, the magnitude of every entry in $P_1$ and $P_2$ is bounded by $O\left(\frac{\epsilon_1^{1+q}\eps}{a^{m(1-s)}} \right)$. Notice that the covariance matrix of $\left(W_e(a,b), W_e^*(a,b)\right)$ is 
\[
V_1=
\begin{pmatrix}
C_1 & P_1\\
P_1^* & C_1^*
\end{pmatrix}.
\]
By Equation $(27)$ in \cite{Gaussian3} and the Taylor expansion, the distribution of $W_e(a,b)$ is described by the following distribution
\[
\frac{e^{-\frac{1}{2} (z_1^*,z_1)V_1^{-1} (z_1,z_1^*)^T   }}{ \pi \sqrt{\det V_1}  }
= \frac{e^{-\epsilon_1^{-\left(1+q\right)}|z_1|^2\|w\|^{-2}}}{\pi\epsilon_1^{1+q}\|w\|^2} \left( 1+ O\left( \frac{\eps |z_1|^2}{\epsilon_1^{1+q}a^{m(1-s)}}  \right)\right).
\] 
By the same argument, the covariance matrix of $\left(W_e(a,b),\partial_b W_e(a,b),W_e^*(a,b),\partial_b W_e^*(a,b)\right)$ is
\[
V_2=
\begin{pmatrix}
C_2 & P_2\\
P_2^* & C_2^*
\end{pmatrix}.
\]
Let $z = \left(z_1,z_2\right)^T$. Then the distribution of $\left(W_e(a,b),\partial_b W_e(a,b) \right)$ is described by the joint probability density
\begin{equation}
\label{Robust:eqn:1d3p}
\frac{e^{-\frac{1}{2}(z_1^*,z_2^*,z_1,z_2) V_2^{-1}(z_1,z_2,z_1^*,z_2^*)^T  }  }{\pi^2\sqrt{\det V_2}}.
\end{equation}
Notice that $C_2=\eps_1^{1+p}V$ and $V$ has eigenvalues of order $a^2$ and $a^{2(s-1)}$ determined by estimating the diagonal entries of the matrix $D$ in the diagonalization $V^{-1}=U^*DU$. Hence, $C_2$ has eigenvalues of order $\eps_1^{1+p}a^2$ and $\eps_1^{1+p}a^{2(s-1)}$. Recall that the magnitude of every entry in $P_2$ is bounded by $O\left(\frac{\epsilon_1^{1+q}\eps}{a^{m(1-s)}} \right)$. This means that $V_2$ is nearly dominated by diagonal blocks $C_2$ and $C_2^*$. Basic spectral theory for linear transforms shows that
\[
V_2^{-1}=
\begin{pmatrix}
C_2^{-1} & \\
 & (C_2^*)^{-1}
\end{pmatrix}
+ P_\eps,
\]
where $P_\eps$ is a matrix with $2$-norm bounded by $O\left(\epsilon_1^{-\left(1+q\right)} \eps a^{(m-4)(s-1)} \right)$.
$\frac{m-6}{m-4}\geq s$ is crucial to the above spectral analysis. Since every entry of $P_2$ is bounded by $O\left(\frac{\epsilon_1^{1+q}\eps}{a^{m(1-s)}} \right)$,
\[
\det V_2 =  (\det C_2)^2 +O\left(\frac{\epsilon_1^{4(1+q)}\eps}{a^{m-2-(m+2)s}} \right),
\]
where the residual comes from the entry bound and the eigenvalues of $C_2$. Hence \eqref{Robust:eqn:1d3p} is actually
\[
\frac{e^{-\epsilon_1^{-\left(1+q\right)} z^{*} V^{-1}z  }e^{-\frac{1}{2}(z_1^*,z_2^*,z_1,z_2) P_\eps (z_1,z_2,z_1^*,z_2^*)^T   }  }{\pi^2 \epsilon_1^{2(1+q)}\sqrt{ (\det V)^2 +O\left(\frac{\eps}{a^{m-2-(m+2)s)}} \right) }}.
\] 
By the same argument as in the first step, we can show that there exist a diagonal matrix $D=\diag\{a^{2(1-s)},a^{-2} \} $ and a unitary matrix $U$ such that $V^{-1}=U^*DU$. Part $\left(i\right)$ is still true by previous theorems. To conclude Part $\left(ii\right)$ to $\left(v\right)$, we still need to estimate the probability of those events defined in the first step, i.e.,  $P\left(G_1\right)$, $P\left(G_2\right)$, $P\left(G_1\cap G_3\right)$, $P\left(G_2\cap G_3\right)$, $P\left(H_k\right)$ and $P\left(J_k\right)$.  By the estimations above, one can show that
 \begin{eqnarray*}
 P\left(G_1\right)=1-e^{-a^{-s}\epsilon_1^{-q}\|w\|^{-2}} + O\left( \frac{\eps}{a^{m(1-s)}}\right) ,
 \end{eqnarray*}
 and
  \begin{eqnarray*}
 P\left(G_2\right)
=1-e^{-\epsilon_1^{-q}\|w\|^{-2}}  + O\left( \frac{\eps}{a^{m(1-s)}}\right) .
 \end{eqnarray*}

Hence, we can conclude $\left(ii\right)$ and $\left(iii\right)$ follows the same proof in the first step. Next, we look at the last two part of this theorem.

Let us introduce notations $\delta_1=a^{-s/2}\sqrt{\epsilon_1}$, $\delta_2= \sqrt{\epsilon_1}$, $\delta_3=\left(a^{1-s/2}+a^{s/2}\right)\sqrt{\epsilon_1}$, $d_1=\min\{\frac{\delta_1}{\sqrt{2}},\frac{\delta_3}{\sqrt{2}}\}$, and $d_2=\min\{\frac{\delta_2}{\sqrt{2}},\frac{\delta_3}{\sqrt{2}}\}$. By previous estimations, we have
  \begin{eqnarray}
  \label{Robust:eqn:1d3g1g3}
 & & P\left(G_1\cap G_3\right) \nonumber \\
&=& \int_{\{|z_1|<\delta_1,|z_2|<\delta_3\}}  \frac{e^{-\frac{1}{2}(z_1^*,z_2^*,z_1,z_2) V_2^{-1}(z_1,z_2,z_1^*,z_2^*)^T  }  }{\pi^2\sqrt{\det V_2}} dz_1dz_2\nonumber \\ 
&=&   \int_{\{|z_1|<\delta_1,|z_2|<\delta_3\}}  \frac{e^{-\epsilon_1^{-\left(1+q\right)} z^{*} V^{-1}z  }e^{-\frac{1}{2}(z_1^*,z_2^*,z_1,z_2) P_\eps (z_1,z_2,z_1^*,z_2^*)^T   }  }{\pi^2 \epsilon_1^{2(1+q)}\sqrt{ (\det V)^2 +O\left(\frac{\eps}{a^{m-2-(m+2)s)}} \right) }} dz_1 dz_2.
\end{eqnarray}
Since 
\begin{equation}
\label{Robust:eqn:1d3er1}
\frac{\det V}{\sqrt{ (\det V)^2 +O\left(\frac{\eps}{a^{m-2-(m+2)s)}} \right) }}= 1+ O\left(\frac{\eps}{a^{(m-2)(1-s))}}\right),
\end{equation}
we can drop out the term $O\left(\frac{\eps}{a^{m-2-(m+2)s)}} \right) $ in \eqref{Robust:eqn:1d3g1g3}, which would generate an absolute error no more than $O\left(\frac{\eps}{a^{(m-2)(1-s))}}\right)$ in the estimate of $P(G_1\cap G_3)$. Let 
\[
g(z)=-\frac{1}{2}(z_1^*,z_2^*,z_1,z_2) P_\eps (z_1,z_2,z_1^*,z_2^*)^T,
\]
 then by the change of variables we have
\begin{eqnarray}
& & P\left(G_1\cap G_3\right) \nonumber \\
&\approx &   \int_{\{|z_1|<\delta_1,|z_2|<\delta_3\}}  \frac{e^{-\epsilon_1^{-\left(1+q\right)} z^{*} V^{-1}z  }e^{g(z)   }  }{\pi^2 \epsilon_1^{2(1+q)}\det V } dz_1 dz_2\nonumber \\
& = & \int_{\{|z_1|<\delta_1,|z_2|<\delta_3\}}\frac{e^{-\epsilon_1^{-\left(1+q\right)}\left(D_{11}|z_1'|^2+D_{22}|z_2'|^2\right)}  e^{g(U^*z')   }   }{\pi^2\epsilon_1^{2\left(1+q\right)}\det V} dz'_1dz'_2\nonumber \\
  & \geq & \int_{\{|z'_1|<d_1,|z'_2|<d_1\}}\frac{e^{-\epsilon_1^{-\left(1+q\right)}\left(D_{11}|z_1'|^2+D_{22}|z_2'|^2\right)} e^{g(U^*z')   } }{\pi^2\epsilon_1^{2\left(1+q\right)}\det V} dz'_1dz'_2\nonumber \\
  &= &\frac{1}{\pi^2\epsilon_1^{2\left(1+q\right)}\det V}\int_{0}^{d_1}   \int_{0}^{d_1}  \int_{0}^{2\pi}  \int_{0}^{2\pi} r_1 r_2 e^{-\frac{D_{11}r_1^2}{\epsilon_1^{1+q}}}  e^{-\frac{D_{22}r_2^2}{\epsilon_1^{1+q}}} e^{\tilde{g}(r_1,\theta_1,r_2,\theta_2)}d\theta_1 d\theta_2  dr_1dr_2\nonumber \\
  &=&   \frac{1}{\pi^2\epsilon_1^{2\left(1+q\right)}\det V}\int_{0}^{d_1}   \int_{0}^{d_1}  \int_{0}^{2\pi}  \int_{0}^{2\pi} r_1 r_2 e^{-\frac{D_{11}r_1^2}{\epsilon_1^{1+q}}}  e^{-\frac{D_{22}r_2^2}{\epsilon_1^{1+q}}} \left( e^{\tilde{g}(r_1,\theta_1,r_2,\theta_2)} -1 \right)d\theta_1 d\theta_2  dr_1dr_2\nonumber \\
  & & + \left(1-e^{-\frac{D_{11}d_1^2}{\epsilon_1^{1+q}}}\right)\left(1-e^{-\frac{D_{22}d_1^2}{\epsilon_1^{1+q}}}\right),
  \label{Robust:eqn:1d3g1g32}
  \end{eqnarray}
  where $\tilde{g}(r_1,\theta_1,r_2,\theta_2)=g(U^*z')$. Recall that the $2$-norm of $P_\eps$ is bounded by $O\left(\epsilon_1^{-\left(1+q\right)} \eps a^{(m-4)(s-1)} \right)$. Hence, 
  \[
  |\tilde{g}(r_1,\theta_1,r_2,\theta_2)|\leq O\left(\epsilon_1^{-\left(1+q\right)} \eps a^{(m-4)(s-1)} \right)\left( |z_1|^2+|z_2|^2\right)=O\left(\epsilon_1^{-\left(1+q\right)} \eps a^{(m-4)(s-1)} \right)\left( r_1^2+r_2^2\right).
  \]
  Therefore,
  the first term in \eqref{Robust:eqn:1d3g1g32} is bounded by
  \begin{eqnarray}
& &\frac{ O\left( \eps a^{(m-4)(s-1)} \right)}{\epsilon_1^{3\left(1+q\right)}\det V}\int_{0}^{d_1}   \int_{0}^{d_1} r_1 r_2 e^{-\frac{D_{11}r_1^2}{\epsilon_1^{1+q}}}  e^{-\frac{D_{22}r_2^2}{\epsilon_1^{1+q}}} \left( r_1^2+r_2^2\right)   dr_1dr_2 \nonumber  \\
& \leq & O\left( \frac{\eps}{D_{22} a^{(m-4)(1-s)}}\right) \int_{0}^{\infty} \int_{0}^{\infty} r_1 r_2 \left( r_1^2+r_2^2\right) e^{-r_1^2} e^{-r_2^2} dr_1 dr_2\nonumber \\
& = &  O\left( \frac{\eps}{a^{(m-4)(1-s)-2}}\right). 
\label{Robust:eqn:1d3er2}
  \end{eqnarray}
  The analysis in \eqref{Robust:eqn:1d3er1} and \eqref{Robust:eqn:1d3er2} implies that
  \[
  P(G_1\cup G_3) \geq \left(1-e^{-\frac{D_{11}d_1^2}{\epsilon_1^{1+q}}}\right)\left(1-e^{-\frac{D_{22}d_1^2 }{\epsilon_1^{1+q}}}\right) + O\left( \frac{\eps}{a^{(m-4)(1-s)-2}}\right).
  \]
  and similarly
    \begin{eqnarray*}
  P\left(G_2\cap G_3\right) 
  &\geq & \left(1-e^{-\frac{D_{11}d_2^2}{\epsilon_1^{1+q}}}\right)\left(1-e^{-\frac{D_{22}d_2^2}{\epsilon_1^{1+q}}}\right)  + O\left( \frac{\eps}{a^{(m-4)(1-s)-2}}\right).
  \end{eqnarray*}
The rest of the proof is exactly the same as the one in the first step and consequently we know this theorem is also true for a mother wave packets of type $(\eps,m)$ with $m$ satisfying $m\geq \frac{2}{1-s}+4$.
\end{proof}

Thus far, we have considered the analysis for small perturbation and white Gaussian process. Next, Theorem \ref{thm:1d3} is extended to a broader class of colored random processes. 

\begin{theorem}
  \label{thm:1d4}
  Suppose the mother wave packet is of type $(\eps,m)$, for any fixed $\eps\in(0,1)$ and any fixed integer $m\geq \frac{2}{1-s}+4$.  
  Suppose $g(x) = f(x) + e$, where $e$ is a zero mean stationary Gaussian process. Let $\widehat{e}(\xi)$ denote the spectrum of $e$, $\max_{\xi} |\widehat{e}(\xi)|\leq \eps^{-1}$ and $M_a=\max_{|\xi|<1}\widehat{e}(a^s\xi+a)$.  For any $p\in(0,\frac{1}{2}]$ and $q>0$, let $\delta_a=M_a^{(\frac{1}{2}-p)/(1+q)}+\sqrt\eps$,
  \[
  R_{\delta_a} = \{(a,b): |W_g(a,b)|\geq a^{-s/2} \delta_a\},
  \]
    \[
  S_{\delta_a} = \{(a,b): |W_g(a,b)|\geq  \delta_a\},
  \]
  and 
  \[
  Z_{k} = \{(a,b): |a-N_k\phi_k'(b)|\leq a^s \}
  \]
  for $1\le k\le K$.   
  For fixed $M$, $m$, $K$, $s$, and $\eps$, there
  exists a constant $N_0\left(M,m,K,s,\eps\right)\simeq \max \left\{  \epsilon^{\frac{-1}{2s-1}},\epsilon^{\frac{-1}{1-s}}\right\}$ such that for any
  $N>N_0\left(M,m,K,s,\eps\right)$ and $f(x)\in F\left(M,N,K,s\right)$ the following statements
  hold.
  \begin{enumerate}[(i)]
  \item $\{Z_k: 1\le k \le K\}$ are disjoint. 
  \item If $(a,b)\in R_{\delta_a}$, then $(a,b)\in \bigcup_{1\le k \le K} Z_k$ with a probability at least \[1-e^{-O\left(N_k^{-s}M_a^{-q/(1+q)}\right)}+ O\left( \frac{\eps}{N_k^{m(1-s)}}\right) .\]
  \item If $(a,b)\in S_{\delta_a}$, then $(a,b)\in \bigcup_{1\le k \le K} Z_k$ with a probability at least \[1-e^{-O(M_a^{-q/(1+q)})}+ O\left( \frac{\eps}{N_k^{m(1-s)}}\right) .\]
  \item If $(a,b) \in R_{\delta_a} \cap Z_k$ for some $k$, then 
    \[
    \frac{|v_g(a,b)-N_k\phi_k'(b)|}{ |N_k \phi_k'(b)|}\lesssim\sqrt \eps + M_a^{p/(1+q)}
    \]
    is true with a probability at least \[\left(1-e^{-O\left(N_k^{2-3s}M_a^{-q/(1+q)}\right)}\right)\left(1-e^{-O\left(N_k^{-s-2}M_a^{-q/(1+q)}\right)}\right)+ O\left( \frac{\eps}{N_k^{(m-4)(1-s)-2}}\right).\]
    \item If $(a,b) \in S_{\delta_a} \cap Z_k$, then
    \[
    \frac{|v_g(a,b)-N_k\phi_k'(b)|}{ |N_k \phi_k'(b)|}\lesssim N_k^{-s/2}\left(\sqrt \eps + M_a^{p/(1+q)} \right)
    \]
    is true with a probability at least \[\left(1-e^{-O\left(N_k^{2-2s}M_a^{-q/(1+q)} \right)}\right)\left(1-e^{-O\left(N_k^{-2}M_a^{-q/(1+q)}\right)}\right)+ O\left( \frac{\eps}{N_k^{(m-4)(1-s)-2}}\right).\]
  \end{enumerate}
\end{theorem}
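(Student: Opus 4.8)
The plan is to reduce this theorem to Theorem~\ref{thm:1d3} by showing that, at a fixed point $(a,b)$, the wave packet coefficients of a colored stationary Gaussian process behave like those of a white Gaussian process whose constant variance $\epsilon_1^{1+q}$ has been replaced by the location-dependent quantity $M_a$. Under the formal substitution $\epsilon_1^{1+q}\mapsto M_a$, equivalently $\epsilon_1\mapsto M_a^{1/(1+q)}$, the threshold $\sqrt\epsilon+\epsilon_1^{1/2-p}$ of Theorem~\ref{thm:1d3} turns into $\delta_a=\sqrt\epsilon+M_a^{(1/2-p)/(1+q)}$, the relative-error term $\epsilon_1^p$ turns into $M_a^{p/(1+q)}$, and each exponent $\epsilon_1^{-q}$ in the probability bounds turns into $M_a^{-q/(1+q)}$, which is exactly the form of the statement. (One may assume $M_a\le1$, since when $M_a>1$ the probability bounds are vacuous.) So the only genuinely new ingredient is the covariance computation; the remainder is a transcription of the proofs of Theorems~\ref{thm:1d2} and~\ref{thm:1d3}, and part (i) is immediate from Theorem~\ref{thm:1d1}.

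For the covariance computation I would use the spectral representation of $e$ together with $\widehat{w_{ab}}(\xi)=a^{-s/2}\widehat w\bigl(a^{-s}(\xi-a)\bigr)e^{-2\pi ib\xi}$ and the identity $\partial_b w_{ab}=-\partial_x w_{ab}$, which gives $\widehat{\partial_b w_{ab}}(\xi)=-2\pi i\xi\,\widehat{w_{ab}}(\xi)$. Then $W_e(a,b)$ and $\bigl(W_e(a,b),\partial_b W_e(a,b)\bigr)$ are zero-mean Gaussian, and after the change of variables $\eta=a^{-s}(\xi-a)$,
\[
\E|W_e(a,b)|^2=\int|\widehat w(\eta)|^2\,\widehat e(a^s\eta+a)\,d\eta,
\]
while the entry for $\partial_b W_e$ picks up a factor $|2\pi(a^s\eta+a)|^2\simeq a^2$ on the essential support $|\eta|\lesssim1$ and the cross entry a factor of size $O(a)$. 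Splitting each integral at $|\eta|=1$, the core is controlled by $M_a\|w\|^2$ (respectively $O(a^2M_a\|w\|^2)$ and $O(aM_a\|w\|^2)$), and the tail is controlled by $\max_\xi|\widehat e(\xi)|\le\epsilon^{-1}$ times $\int_{|\eta|\ge1}|\widehat w(\eta)|^2\,d\eta=O(\epsilon^2)$ by the type-$(\epsilon,m)$ decay, hence is $O(\epsilon)$. Therefore the covariance matrix of $\bigl(W_e(a,b),\partial_b W_e(a,b)\bigr)$ equals $M_a'V$ up to an $O(\epsilon)$ truncation, where $0\le M_a'\le M_a$ and $V$ is self-adjoint and invertible with the same spectral orders in $a$ as in the proof of Theorem~\ref{thm:1d3}, namely $V^{-1}=U^*DU$ with $D=\diag\{a^{2(1-s)},a^{-2}\}$; and, as in Step~2 of that proof, the pseudo-covariances (which vanish only for a type-$(0,m)$ packet) are $O\bigl(M_a\epsilon/a^{m(1-s)}\bigr)$, since the lobe of $\widehat{w_{ab}}$ at $+a$ meets the reversed lobe at $-a$ only through the tails of $\widehat w$.

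With this in hand I would run the two steps of the proof of Theorem~\ref{thm:1d3} under the substitution $\epsilon_1^{1+q}\mapsto M_a$. In Step~1 (type $(0,m)$) the coefficients are circularly symmetric; defining $G_1,G_2,G_3$ as there with $\sqrt{\epsilon_1}$ replaced by $M_a^{1/(2(1+q))}$ and diagonalizing via $V^{-1}=U^*DU$ gives $P(G_1)\ge1-e^{-a^{-s}M_a^{-q/(1+q)}\|w\|^{-2}}$, $P(G_2)\ge1-e^{-M_a^{-q/(1+q)}\|w\|^{-2}}$ (the bound $M_a'\le M_a$ entering in the favorable direction), and the product lower bounds for $P(G_1\cap G_3)$, $P(G_2\cap G_3)$ with exponents of orders $N_k^{2-3s}M_a^{-q/(1+q)}$, $N_k^{-s-2}M_a^{-q/(1+q)}$ and $N_k^{2-2s}M_a^{-q/(1+q)}$, $N_k^{-2}M_a^{-q/(1+q)}$ after using $a\simeq N_k$ on $Z_k$. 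On these good events the error $e$ obeys the coefficient bounds \eqref{Robust:eqn:1d21} with $\sqrt{\epsilon_1}$ replaced by $M_a^{1/(2(1+q))}$, so the deterministic conclusions of Theorems~\ref{thm:1d1} and~\ref{thm:1d2} apply with effective parameter $\epsilon_1=M_a^{1/(1+q)}$ — giving the containments in (ii),(iii) and the relative errors $\sqrt\epsilon+M_a^{p/(1+q)}$ in (iv),(v) — and the stated probabilities are those of the good events. Step~2 (type $(\epsilon,m)$, $m\ge\frac{2}{1-s}+4$) introduces non-vanishing pseudo-covariances of order $O(M_a\epsilon/a^{m(1-s)})$ that make the joint density block-diagonally dominated; the spectral perturbation estimate of Theorem~\ref{thm:1d3} (using $\frac{m-6}{m-4}\ge s$) carries over verbatim, and together with the $O(\epsilon)$ frequency-tail error in $\E|W_e|^2$ it produces the additive error terms $O(\epsilon/N_k^{m(1-s)})$ in (ii),(iii) and $O(\epsilon/N_k^{(m-4)(1-s)-2})$ in (iv),(v).

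The step I expect to be the main obstacle is the covariance computation itself — verifying that a single scalar ($M_a$, or its averaged version $M_a'$) simultaneously governs the variance of $W_e$, the rescaled variance of $\partial_b W_e$, and their cross-covariance, so that $V$ retains the eigenvalue orders $a^{2(1-s)}$ and $a^{-2}$ on which the change of variables depends. The subtlety is that $M_a=\max_{|\xi|<1}\widehat e(a^s\xi+a)$ is only an upper bound for $\widehat e$ over the essential frequency support of $\widehat{w_{ab}}$, not its pointwise value; but since every conclusion of the theorem is a probability \emph{lower} bound, for which only upper bounds on the relevant variances are needed, replacing $\widehat e$ by $M_a$ there is always in the right direction, and the residual discrepancies — in particular the contribution of $\widehat e$ far from the frequency $a$ — are absorbed into the stated $\epsilon$-truncation terms.
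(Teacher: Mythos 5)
Your proposal follows essentially the same route as the paper, which proves Theorem~\ref{thm:1d4} by observing it is ``nearly identical'' to Theorem~\ref{thm:1d3} with more cumbersome covariance calculations (deferring details to the thesis): you replace the white-noise variance $\epsilon_1^{1+q}$ by $M_a$ via the spectral representation of $e$, bound the core of the covariance by $M_a$ on the essential support of $\widehat{w_{ab}}$ and the tail by the $(\eps,m)$ decay together with $\max_\xi|\widehat e(\xi)|\le\eps^{-1}$, and then rerun the two-step argument of Theorem~\ref{thm:1d3}. Your added observation that only upper bounds on the covariance are needed for the probability lower bounds (and that the residual spectral discrepancies are absorbed into the $\eps$-truncation terms) is consistent with, and a reasonable filling-in of, the calculation the paper leaves to the thesis.
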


\begin{proof}
The proof of this theorem is nearly identical to Theorem \ref{thm:1d3}. Although calculations are more cumbersome, the derivation proceeds along the same line (see the proof of Theorem 3.2.3 in \cite{Thesis} for details).
\end{proof}

Theorem \ref{thm:1d3} and \ref{thm:1d4} provide a new insight that a smaller $s$ yields a synchrosqueezed transform that can provide a good estimation with higher probability. The parameter $m$ in the mother wave packet is also important, e.g., satisfying $m\geq \frac{2}{1-s}+4$. In a special case, if a compactly supported synchrosqueezed wavelet transform (corresponding to $s=1$) is preferable, then we require that $m\geq  \frac{2}{1-s}+4=\infty$. Hence, the mother wavelet is better to be $C^\infty$. 

We have not optimized the requirement of the variance of the Gaussian process $e$ and the probability bound in Theorem \ref{thm:1d3} and Theorem \ref{thm:1d4}. According to the numerical performance of the SSTs, the requirement of the variance could be weakened and the probability estimation could be improved. A key step is to improve the estimate of $P(G_1\cap G_3)$ and $P(G_2\cap G_3)$ in the above proofs. This is left as future work.

The above statistical property of the $1$D SSWPT can be extended to higher dimensional cases, e.g. the $2$D SSWPT and SSCT. However, the notations are much heavier and the calculations are more tedious. We close this section with the theorem for the $2$D SSWPT. See the proof of Theorem 3.4.4 and a similar theorem for the SSCT in \cite{Thesis}.

\begin{theorem}
  \label{thm:2d3}
  Suppose the $2$D mother wave packet is of type $(\epsilon,m)$, for any fixed $\eps\in(0,1)$ and any fixed integer $m \geq \max\left\{ \frac{2(1+s)}{1-s},\frac{2}{1-s}+4 \right\}$.  
  Suppose $g(x)=f(x)+e$, where $e$ is a zero mean stationary Gaussian process with a spectrum denoted by  $\widehat{e}(\xi)$ and $\max_{\xi} |\widehat{e}(\xi)|\leq \eps^{-1}$. Define $M_a=\max_{|\xi|<1}\widehat{e}(|a|^s\xi+a)$. For any $p\in(0,\frac{1}{2}]$ and $q>0$, let $\delta_a=M_a^{(\frac{1}{2}-p)/(1+q)}+\sqrt\eps$,
  \[
  R_{\delta_a} = \{(a,b): |W_g(a,b)|\geq |a|^{-s} \delta_a\},
  \]
    \[
  S_{\delta_a} = \{(a,b): |W_g(a,b)|\geq  \delta_a\},
  \]
  and 
  \[
  Z_{k} = \{(a,b): |a-N_k\grad_b\phi_k(b)|\leq |a|^s \}
  \]
 for $1\le k\le K$. For fixed $M$, $m$, $s$, $\eps$ and $K$, there
  exists a constant $N_0\left(M,m,K,s,\eps\right)\simeq \max\left\{ \epsilon^{\frac{-2}{2s-1}},\epsilon^{\frac{-1}{1-s}} \right\}$ such that for any
  $N>N_0$ and $f(x)\in F\left(M,N,K,s\right)$ the following statements
  hold.
  \begin{enumerate}[(i)]
  \item $\{Z_k: 1\le k \le K\}$ are disjoint. 
  \item If $(a,b)\in R_{\delta_a}$, then $(a,b)\in \bigcup_{1\le k \le K} Z_k$ with a probability at least \[1-e^{-O\left(N_k^{-2s}M_a^{-q/(1+q)}\right)}+ O\left( \frac{\eps}{N_k^{m(1-s)}}\right) .\]
  \item If $(a,b)\in S_{\delta_a}$, then $(a,b)\in \bigcup_{1\le k \le K} Z_k$ with a probability at least \[1-e^{-O(M_a^{-q/(1+q)})}+ O\left( \frac{\eps}{N_k^{m(1-s)}}\right) .\]
    \item If $(a,b) \in R_{\delta_a} \cap Z_{k}$ for some $k$, then
    \[
    \frac{|v_g(a,b)-N_k\grad \phi_k(b)|}{ |N_k \grad \phi_k(b)|}\lesssim\sqrt \eps +M_a^{p/(1+q)} 
    \]
    is true with a probability at least 
    \begin{eqnarray*}
    &\left(1-e^{-O\left(N_k^{2-4s}M_a^{-q/(1+q)}\right)}\right)\left(1-e^{-O\left(N_k^{-4s}M_a^{-q/(1+q)}\right)}\right)\left(1-e^{-O\left(N_k^{-2-2s}M_a^{-q/(1+q)}\right)}\right)\\
    &+ O\left( \frac{\eps}{N_k^{(m-4)(1-s)-2}}\right)+O\left(\frac{\eps}{N_k^{m-2-(m+2)s}}\right).
    \end{eqnarray*}
     \item If $(a,b) \in S_{\delta_a} \cap Z_{k}$ for some $k$, then
    \[
    \frac{|v_g(a,b)-N_k\grad \phi_k(b)|}{ |N_k \grad \phi_k(b)|}\lesssim N_k^{-s}\left(\sqrt \eps+M_a^{p/(1+q)} \right)
    \]
    is true with a probability at least 
    \begin{eqnarray*}
   & \left(1-e^{-O\left(N_k^{2-2s}M_a^{-q/(1+q)}\right)}\right)\left(1-e^{-O\left(N_k^{-2s}M_a^{-q/(1+q)}\right)}\right)\left(1-e^{-O\left(N_k^{-2}M_a^{-q/(1+q)}\right)}\right)\\
    &+ O\left( \frac{\eps}{N_k^{(m-4)(1-s)-2}}\right)+O\left(\frac{\eps}{N_k^{m-2-(m+2)s}}\right).
    \end{eqnarray*}
  \end{enumerate}
\end{theorem}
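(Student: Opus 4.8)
The plan is to mirror, step by step, the two–stage argument behind Theorems~\ref{thm:1d3} and~\ref{thm:1d4}; the only essentially new feature in two dimensions is that the local wave vector lives in $\R^2$, so the relevant Gaussian object is the three–component complex vector $\bm z(a,b)=\bigl(W_e(a,b),\,\partial_{b_1}W_e(a,b),\,\partial_{b_2}W_e(a,b)\bigr)$, and one must control a $3\times 3$ (later a $6\times 6$) covariance matrix in place of the $2\times 2$ (resp. $4\times 4$) one. As in Theorem~\ref{thm:1d4}, the local spectral bound $M_a$ plays the role of the effective noise variance $\epsilon_1^{1+q}$ of the white case, so the dictionary $\epsilon_1^{1+q}\leftrightarrow M_a$, $\sqrt{\epsilon_1}\leftrightarrow M_a^{1/(2(1+q))}$, $\epsilon_1^{-q}\leftrightarrow M_a^{-q/(1+q)}$ turns the $1$D probability bounds into the ones claimed here; part~(i) I would simply quote from Theorem~\ref{thm:1d1}(i), which is already stated in $\R^n$.

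First I would treat the compactly supported case, type $(0,m)$. Since $\widehat w$ is real and supported in $B_1$, $\bm z(a,b)$ is a zero–mean \emph{circularly symmetric} complex Gaussian vector: its pseudo–covariance vanishes because the relevant self–correlation is $\widehat{w^2}$ evaluated at a point of modulus $2|a|^{1-s}\gg 2$, outside $\operatorname{supp}\widehat{w^2}\subset B_2$. Hence its law is governed by the Hermitian matrix $M_aV$ with $V_{11}=\|w\|^2$, $V_{1,j+1}=\langle\widehat{w_{ab}},2\pi i\xi_j\widehat{w_{ab}}\rangle$, $V_{j+1,\ell+1}=\langle 2\pi i\xi_j\widehat{w_{ab}},2\pi i\xi_\ell\widehat{w_{ab}}\rangle$. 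The key linear–algebra step is to diagonalize $V^{-1}=U^*DU$ and estimate $D$: after rotating the $b$–axes so that $a$ points along the first axis, $\partial_{b_\parallel}W_e$ equals $2\pi i|a|W_e$ up to an $O(|a|^s)$ envelope remainder, so a Gram–determinant computation gives $\operatorname{trace}V\simeq|a|^2$, the intermediate symmetric invariant $\simeq|a|^{2+2s}$, and $\det V\simeq|a|^{4s}$; hence $V$ has eigenvalues of orders $|a|^2$ (the near–dependent $W_e$/$\partial_{b_\parallel}W_e$ combination), $|a|^{2s}$ (the transverse derivative $\partial_{b_\perp}W_e$), and $|a|^{2s-2}$ (the parallel envelope remainder, as in $1$D), and $D$ has diagonal entries of orders $|a|^{-2}$, $|a|^{-2s}$, $|a|^{2(1-s)}$. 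With $V$ in hand I would then transcribe the $1$D bookkeeping: set $G_1=\{|W_e(a,b)|<|a|^{-s}M_a^{1/(2(1+q))}\}$, $G_2=\{|W_e(a,b)|<M_a^{1/(2(1+q))}\}$, $G_3=\{|\nabla_bW_e(a,b)|<C(|a|^{1-s}+|a|^s)M_a^{1/(2(1+q))}\}$, and $H_k,J_k$ as before; arguing as around \eqref{Robust:eqn:1d31}--\eqref{Robust:eqn:1d32}, lying in $R_{\delta_a}$ (resp. $S_{\delta_a}$) but not in $\bigcup_kZ_k$ forces $|W_e(a,b)|\geq|a|^{-s}M_a^{1/(2(1+q))}$ (resp. $\geq M_a^{1/(2(1+q))}$), so (ii) and (iii) follow from the scalar tail identities $P(G_1)=1-e^{-O(|a|^{-2s}M_a^{-q/(1+q)})}$, $P(G_2)=1-e^{-O(M_a^{-q/(1+q)})}$ and $a\simeq N_k$ on $Z_k$. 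For (iv) and (v) I would change variables into the eigenbasis of $V$, inscribe a polydisc of radius $d_1\simeq|a|^{-s}M_a^{1/(2(1+q))}$ (resp. $d_2\simeq M_a^{1/(2(1+q))}$, the smallest of the thresholds) inside the region cut out by $G_1\cap G_3$ (resp. $G_2\cap G_3$), and integrate to get $\prod_{j=1}^3\bigl(1-e^{-D_{jj}d_1^2/M_a}\bigr)$; substituting the three $D_{jj}$ reproduces exactly the factors $1-e^{-O(N_k^{2-4s}M_a^{-q/(1+q)})}$, $1-e^{-O(N_k^{-4s}M_a^{-q/(1+q)})}$, $1-e^{-O(N_k^{-2-2s}M_a^{-q/(1+q)})}$ of (iv) and the $N_k^{2-2s},N_k^{-2s},N_k^{-2}$ triple of (v). Finally, on $G_1\cap G_3$ the pointwise perturbation estimate underlying Theorem~\ref{thm:1d2} already forces $H_k$, so $P(H_k)\geq P(G_1\cap G_3)$, and likewise $P(J_k)\geq P(G_2\cap G_3)$.

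Next I would lift the compact–support hypothesis. Now $\bm z$ and $(\bm z,\bar{\bm z})$ carry nonzero pseudo–covariances, but since $\widehat{w^2}$ and its derivatives at modulus $\simeq|a|^{1-s}$ are bounded by the tail of $\widehat w$, every entry of these blocks is $O\!\bigl(M_a\epsilon\,|a|^{-m(1-s)}\bigr)$. Using the real–Gaussian density formula (Eq.~(27) of \cite{Gaussian3}) and a Taylor expansion as in Step~2 of Theorem~\ref{thm:1d3}, the $6\times 6$ covariance is block–dominated by $\operatorname{diag}(M_aV,M_a\overline V)$: the perturbation of its inverse has $2$–norm $O\!\bigl(M_a^{-1}\epsilon\,|a|^{(m-4)(s-1)}\bigr)$, and its determinant is perturbed multiplicatively by $1+O(\epsilon\,|a|^{-(m-2)(1-s)})+O(\epsilon\,|a|^{-(m-2-(m+2)s)})$. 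The two exponent hypotheses $m\geq\frac{2}{1-s}+4$ and $m\geq\frac{2(1+s)}{1-s}$ are precisely what force these two residues to decay; they are the $2$D replacements of the condition $\frac{m-6}{m-4}\geq s$ used in one dimension. Re–running the Step~1 integrals with these perturbations, each probability acquires an additive error $O(\epsilon/N_k^{m(1-s)})$ in (ii)--(iii) and $O(\epsilon/N_k^{(m-4)(1-s)-2})+O(\epsilon/N_k^{m-2-(m+2)s})$ in (iv)--(v) — exactly the correction terms in the statement — and nothing else changes.

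I expect the main obstacle to be the spectral/determinant bookkeeping of the second step. Unlike one dimension, the $2$D gradient block carries \emph{two} small eigenvalues, $\simeq M_a|a|^{2s}$ and $\simeq M_a|a|^{2s-2}$, and one must check that the $O(M_a\epsilon\,|a|^{-m(1-s)})$ pseudo–covariance entries — after inversion and after losing several powers of $|a|$ in the spectral–perturbation estimate — still do not swamp the smaller of them; this is what forces the stronger requirement $m\geq\frac{2(1+s)}{1-s}$ and produces the extra term $O(\epsilon/N_k^{m-2-(m+2)s})$ absent in $1$D. Verifying that inscribing a polydisc of radius $d_1$ costs only an absolute constant when the Gaussian is mildly non–circular, and that the determinant expansion stays controlled over the relevant integration region, is where essentially all the genuine work lies; the rest is a faithful transcription of the $1$D arguments.
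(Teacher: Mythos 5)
Your proposal is correct and follows essentially the same route as the paper, which itself defers the details to the thesis and explicitly describes the 2D case as the same two-step argument (compactly supported case via the circularly symmetric Gaussian vector and its covariance, then the $(\eps,m)$ case via pseudo-covariance perturbation) used for Theorems \ref{thm:1d3} and \ref{thm:1d4}. Your $3\times 3$ covariance analysis with eigenvalue orders $|a|^2$, $|a|^{2s}$, $|a|^{2s-2}$, the dictionary $\epsilon_1^{1+q}\leftrightarrow M_a$, and the identification of $m\geq\frac{2}{1-s}+4$ and $m\geq\frac{2(1+s)}{1-s}$ with the two additive error terms reproduce exactly the exponents in (ii)--(v).
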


\section{Implementation and numerical results}
\label{sec:numeric}
In this section, we provide numerical examples to demonstrate some statistical properties discussed in Section \ref{sec:thm}. Guided by these properties, we explore several new ideas to improve the statistical stability of discrete SSTs in the presence of heavy noise. We have developed SynLab, a collection of MATLAB implementation for various SSTs that has been publicly available at: \url{https://github.com/HaizhaoYang/SynLab}.
Most numerical examples presented in this paper can be found in this toolbox. 

\begin{figure}[ht!]
  \begin{center}
    \begin{tabular}{cc}
   \includegraphics[height=2in]{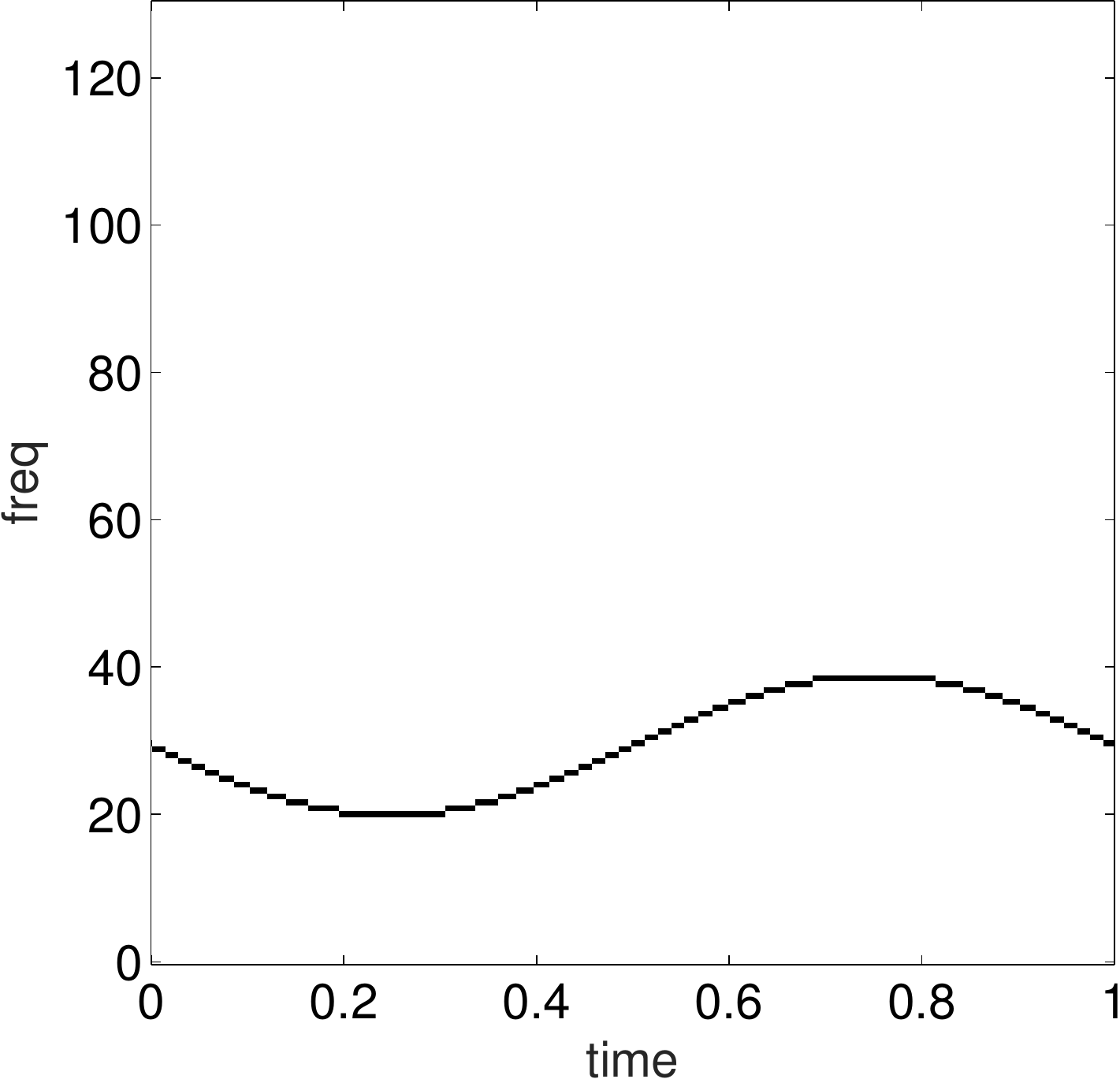} &  \includegraphics[height=2in]{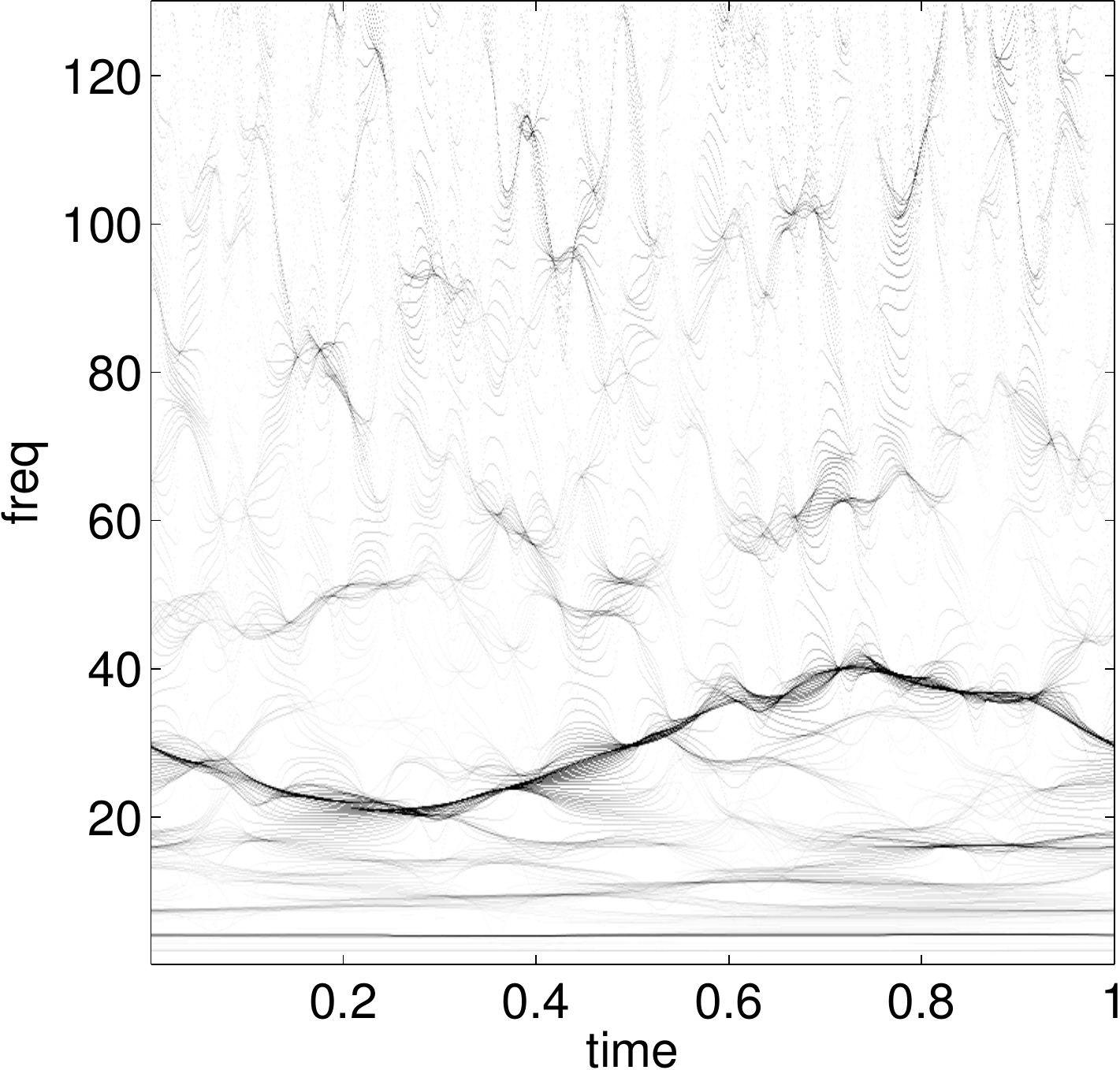}\\
   \includegraphics[height=2in]{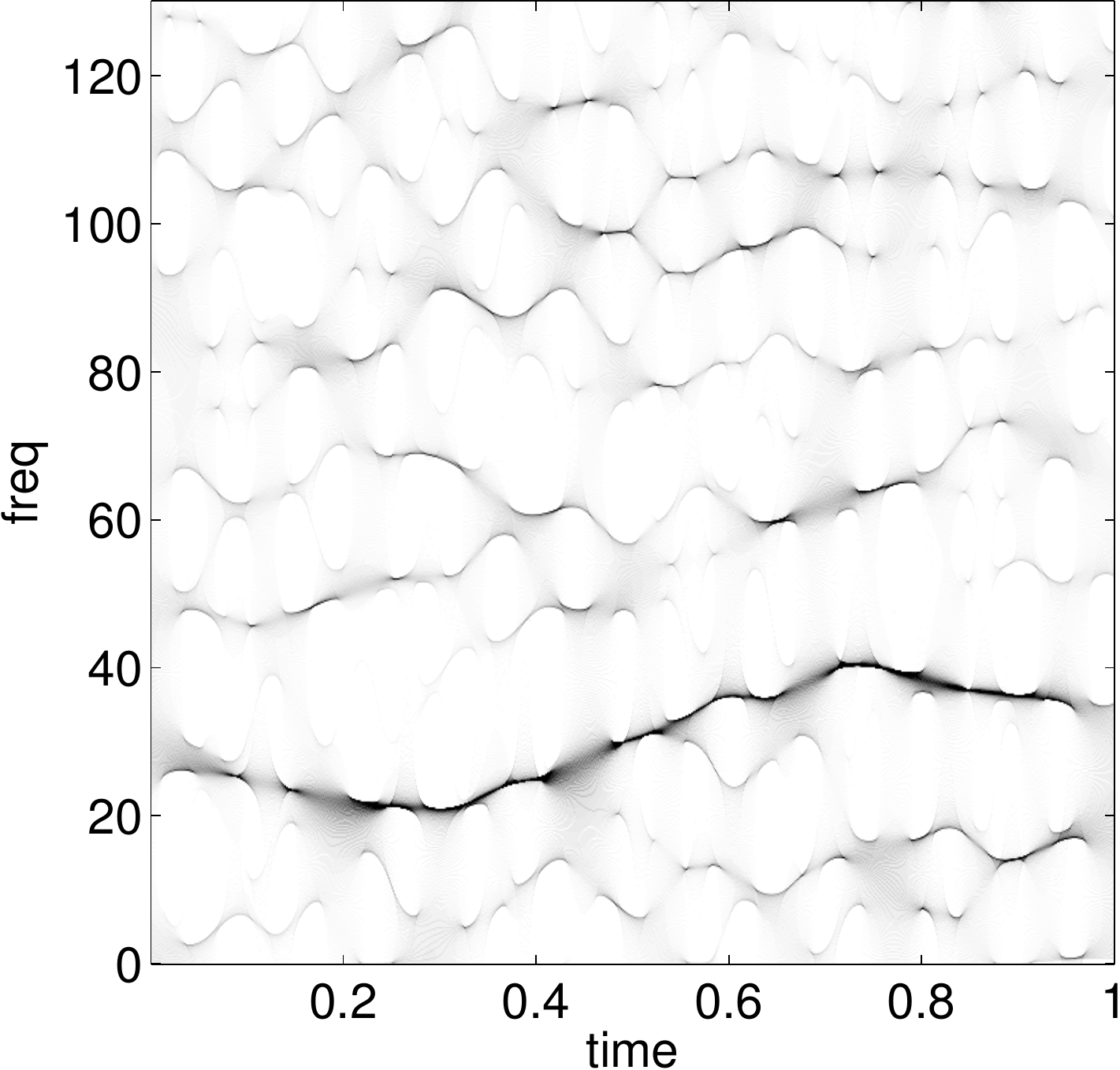} & \includegraphics[height=2in]{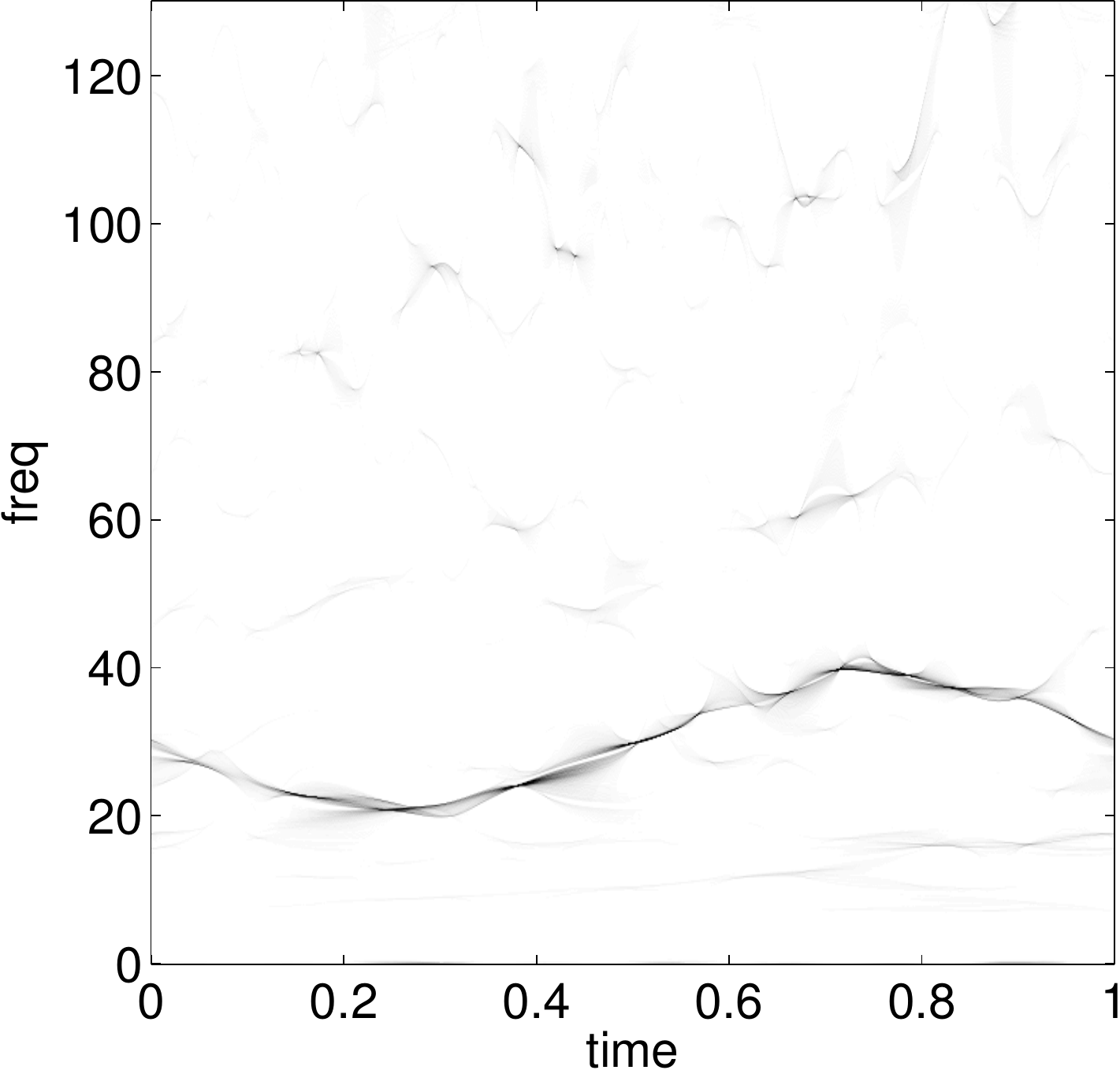} 
    \end{tabular}
  \end{center}
  \caption{Top-left: The real instantaneous frequency. Top-right: The standard SSWT available in \cite{code}. Bottom-left: The standard SSSTFT available in \cite{code}. Bottom-right: The SSWPT with only one frame, i.e., the redundancy parameter $red=1$ in SynLab, which only one wave packet frame is used to compute the synchrosqueezed transform. Parameters in these transforms have been tuned for reasonably good  visualization.}
\label{fig:orgSST}
\end{figure}

\subsection{Implementation for better statistical stability}
\label{subsec:idea}
From the discussion in Section \ref{sec:thm}, we can summarize several observations for designing better implementation of SSTs to reduce the fluctuation of noise. 

\begin{enumerate}
\item \textbf{Smaller geometric parameter $s$}\\
As we can see in the theorems for noisy data, a smaller geometric parameter $s$ results in a higher probability of a good estimation of the instantaneous frequency or the local wave vector via the information function $v_f(a,b)$. Hence, in the case when noise is heavy, it is better to apply an SST with a smaller geometric parameter $s$.
\item  \textbf{Highly redundant frames}\\
Previously in the synchrosqueezed wavelet transform (SSWT) \cite{Daubechies2011}, the synchrosqueezed short-time Fourier transform (SSSTFT) \cite{SSSTFT}, and the synchrosqueezed wave packet transform (SSWPT) \cite{1DSSWPT}, various SSTs were generated from time-frequency transforms with low redundancy to obtain efficient forward and inverse transforms. However, the resultant transforms are not reliable when noise is heavy as we can see in the example in Figure \ref{fig:orgSST}. In this example, one single IMT $f(x)=e^{60\pi i(x+0.05\cos(2\pi x))}$ is embedded in white Gaussian noise with a distribution $2\mathcal{N}(0,1)$. The signal is sampled in a time interval $[0,1]$ with a sampling rate $1024$ Hz. We apply the standard SSWT, SSSTFT in \cite{code} and the SSWPT with $s=0.75$ in SynLab and visualize their results using the same discrete grid in the time-frequency domain. Although we could identify a rough curve in these results to estimate the instantaneous frequency of $f(x)$, the accuracy is poor in some areas and there are many misleading curves coming from the noise.

As we can see in the theory for SSTs, the accuracy of the estimation provided by the information function $v_f(a,b)$ is essentially independent of $a$ and the mother wave packet $w(x)$. This motivates us to synchrosqueeze a highly redundant time-frequency transform with over-complete samples in $a$ and different mother wave packets $w(x)$. This generates many samples of estimations from a single realization of the noisy data. Averaging these estimation samples leads to a better result. It can be understood in the point of view that the contribution of IMTs to the synchrosqueezed energy distribution $T_f(a,b)$ will remain the same due to the coherent averaging, but the contribution of the noise will be smoothed out because of the  incoherent averaging. Applying different mother wave packets is essentially the same as applying multitapers in the multitaper time-frequency reassignment in \cite{Taper4}. In this paper, we only focus on oversampling the variable $a$ and leave the design of different multidimensional mother wave packets as future work. In the numerical examples later, for a fixed geometric parameter $s$, we synchrosqueeze a union of wave packet frames generated by time-frequency shifting, dilation (and rotation in multidimensional spaces). The number of frames is denoted as $red$ in SynLab.

\item \textbf{Selective time-frequency coefficients}\\
As we can see in the proofs of previous theorems, a larger time-frequency coefficient results in a higher probability for a good estimation provided by the information function $v_f(a,b)$. This inspires two ideas: 1) applying an adaptive time-frequency transform before synchrosqueezing; 2) only reassigning the largest coefficient in the domain where there is at most one IMT. The first idea aims at generating large coefficients while the second idea avoids incorrect reassignment as much as possible. Selecting the best coefficients to reassign, in some sense, is similar to the idea of sparse matching pursuit in a highly redundant frame, but we avoid its expensive optimization.
\end{enumerate}

\subsection{Numerical examples}
Here we provide numerical examples to demonstrate the efficiency of the proposed implementation in Section \ref{subsec:idea}. 
In all examples, we assume the given data $g(x) = f(x) + e(x)$ is defined in $[0,1]^n$, where $f(x)$ is the target signal, $e(x)$ is white Gaussian noise with a distribution $\sigma^2\mathcal{N}(0,1)$, and $n$ is the dimension. 

As we have seen in the theorems in Section \ref{sec:thm}, a proper threshold adaptive to the noise level after the wave packet transform is important to obtain an accurate instantaneous frequency/local wave vector estimate. We refer to \cite{noise1,noise2} for estimating noise level and \cite{SSRobust} for designing thresholds for the SSWT. The generalization of these techniques for the transforms here is straightforward. In this paper, we use a small uniform threshold $\delta=10^{-2}$ (rather than a threshold adaptive to noise level) and set $\sigma^2$ such that the noise is overwhelming the original signal. 
We refer the reader to \cite{SSWPT,SSCT} for detailed implementation using these parameters.

\subsubsection{Visual illustrations for statistical properties}

To support the theoretical analysis in Section \ref{sec:thm} and the proposals in Section\ref{subsec:idea}, we compare the performance of the SSWPT with different redundancy parameter $red$ and $s=1/2+k/8$, where $k=1$, $2$ and $3$, in both noiseless cases and highly noisy cases.

\begin{figure}[ht!]
  \begin{center}
    \begin{tabular}{ccc}
   \includegraphics[height=1.6in]{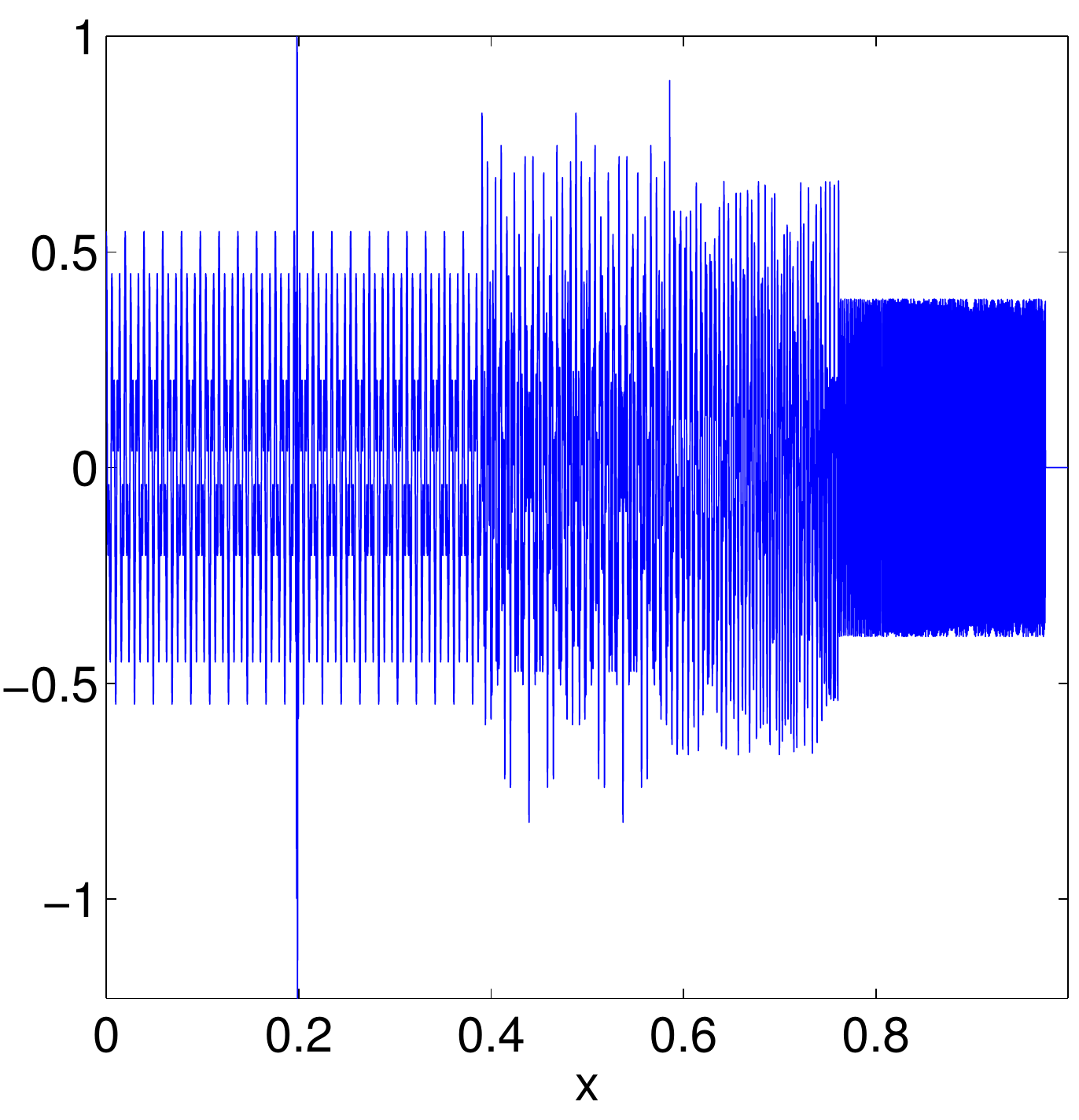}& \includegraphics[height=1.6in]{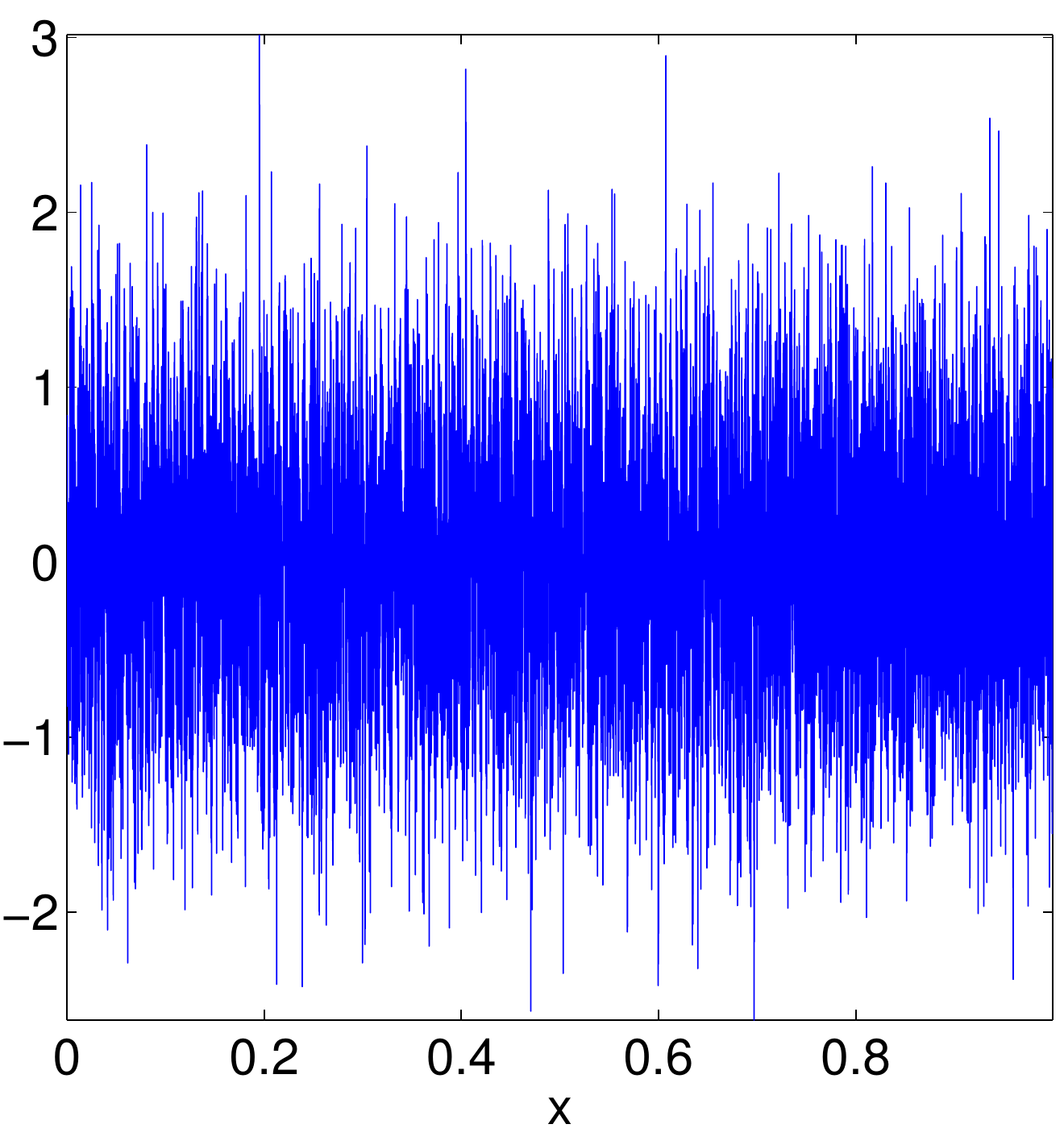} & \includegraphics[height=1.6in]{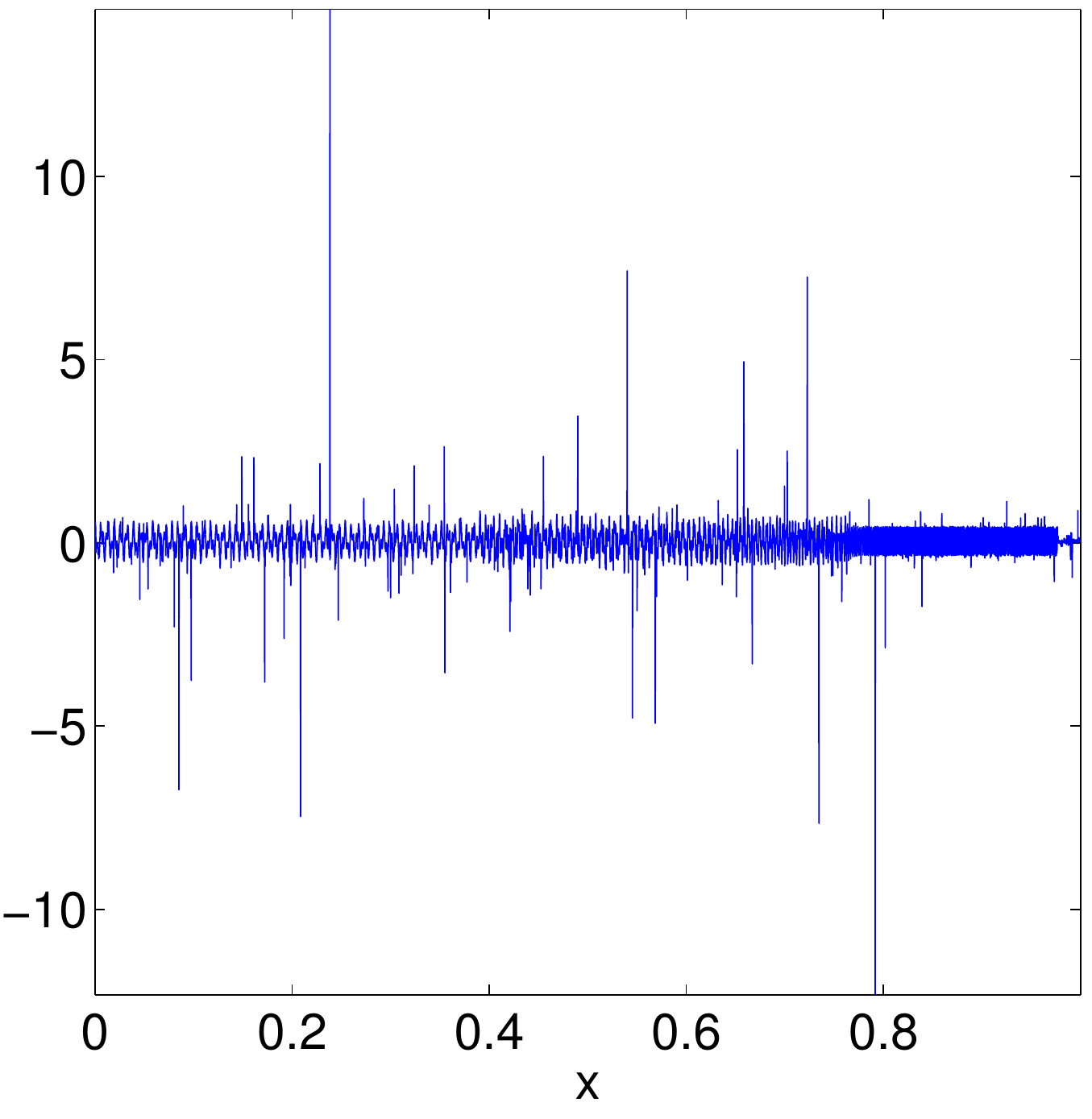} 
    \end{tabular}
  \end{center}
  \caption{Left: A $1$D synthetic benchmark signal. It is normalized using $L^\infty$ norm. Middle: A noisy version generated with white Gaussian noise $0.75\mathcal{N}(0,1)$. Right: A noisy version contaminated by an $\alpha$ stable random noise \cite{impulse} with parameters $\alpha=1$, dispersion$=0.9$, $\delta=1$, $N=8192$. The noise is re-scaled to have a $L^\infty$-norm equal to $15$ by dividing a constant factor.}
\label{fig:1Dsignal}
\end{figure}

\textbf{One-dimensional examples:}
\vspace{0.5cm}

We start with the $1$D SSWPT. In some real applications, e.g., seismic data analysis \cite{GeoReview,SSCT}, wave-like components are only supported in a bounded domain or they have sharp changes in instantaneous frequencies. Hence, we would like to test a benchmark signal $f(x)$ in which there is a component with a bounded support and an oscillatory instantaneous frequency, and a component with an exponential instantaneous frequency (see Figure \ref{fig:1Dsignal}). Of a special interest to test the performance of synchrosqueezed transforms for impulsive waves, a wavelet component is added in this signal at $x=0.2$. The synthetic benchmark signal\footnote{Prepared by Mirko van der Baan and available in \cite{GeoReview,BM}.} $f(x)$ is defined as 
\[
f(x) = \chi_{[0,0.6]}(x)f_1(x) + \chi_{[0,0.6]}(x)f_2(x) + \chi_{[0.6,1]}(x)f_4(x) + \chi_{[0.4,0.8]}(x)f_3(x) +f_5(x),
\]
where
\begin{eqnarray*}
f_1(x)&=&0.6\cos(700\pi x),\\
f_2(x)&=&0.8\cos(300\pi x),\\
f_3(x)&=&0.7\cos(1300\pi x+5\sin(20\pi x)),\\
f_4(x)&=&\sin\left( \frac{80\pi 100^{5x/4}}{\ln (100)} \right),\\
f_5(x)&=&3e^{-50(x-0.2)^2}\cos(50(x-0.2)),
\end{eqnarray*}
and $\chi(x)$ is the indicator function. 
$f(x)$ is sampled in $[0,1]$ with a sampling rate $8192$ Hz and the range of instantaneous frequencies is $150-1600$ Hz. The white Gaussian noise in this example is $0.75\mathcal{N}(0,1)$. 

Although we are not aware of the optimal value of the scaling parameter $s$, it is clear from Theorem \ref{thm:1d3} and \ref{thm:1d4} that the synchrosqueezed transform with a smaller $s$ is more suitable for noisy signals. As shown in the second and the third rows in Figure \ref{fig:1D}, in the noisy cases, the synchrosqueezed energy distribution with $s=0.625$ (in the first column) is better than the one with $s=0.75$ (in the second column), which is better than the one with $s=0.875$ (in the last column). This agrees with the conclusion in Theorem \ref{thm:1d3} and \ref{thm:1d4} that a smaller $s$ results in a higher probability to obtain a good instantaneous frequency estimate. 

Another key point is that a wave packet coefficient with a larger magnitude has a higher probability to give a good instantaneous frequency estimate. A highly redundant wave packet transform would have wave packets better fitting the local oscillation of wave-like components. In another word, there would be more coefficients with large magnitudes. The resulting synchrosqueezed energy distribution has higher non-zero energy concentrating around instantaneous frequencies. This is also validated in Figure \ref{fig:1D}. The results in the third row obtained with $red=16$ is better than those in the second row obtained with $red=1$. 

It is also interesting to observe that the SST with a smaller $s$ is better at capturing the component boundaries, e.g. at $x=0.39$, $0.59$ and $0.77$ and is more robust to an impulsive perturbation (see Figure \ref{fig:1Dsignal} and \ref{fig:1D} at $x=0.2$ and an example of $\alpha$ stable noise in Figure \ref{fig:1Dsignal} and \ref{fig:1D2}). Boundaries and impulse perturbations would produce frequency aliasing. The SSWPT with a smaller $s$ has wave packets with a smaller support in frequency and a larger support in space. Hence, it is more robust to frequency aliasing in the sense that the influence of impulsive perturbations is smoothed out and the synchrosqueezed energy of the target components might not get dispersed when it meets the frequency aliasing, as shown in Figure \ref{fig:1D2}.

However, if $s$ is small, the instantaneous frequency estimate might be smoothed out and it is difficult to observe detailed information of instantaneous frequencies. As shown in the first row of Figure \ref{fig:1D}, when the input signal is noiseless, the synchrosqueezed transforms with $s=0.75$ and $0.875$ have better accuracy than the one with $s=0.625$.  In short, it is important to tune scaling parameters for data-dependent synchrosqueezed transforms, which has been implemented in the SynLab toolbox.

\begin{figure}[ht!]
  \begin{center}
    \begin{tabular}{ccc}
       \includegraphics[height=1.8in]{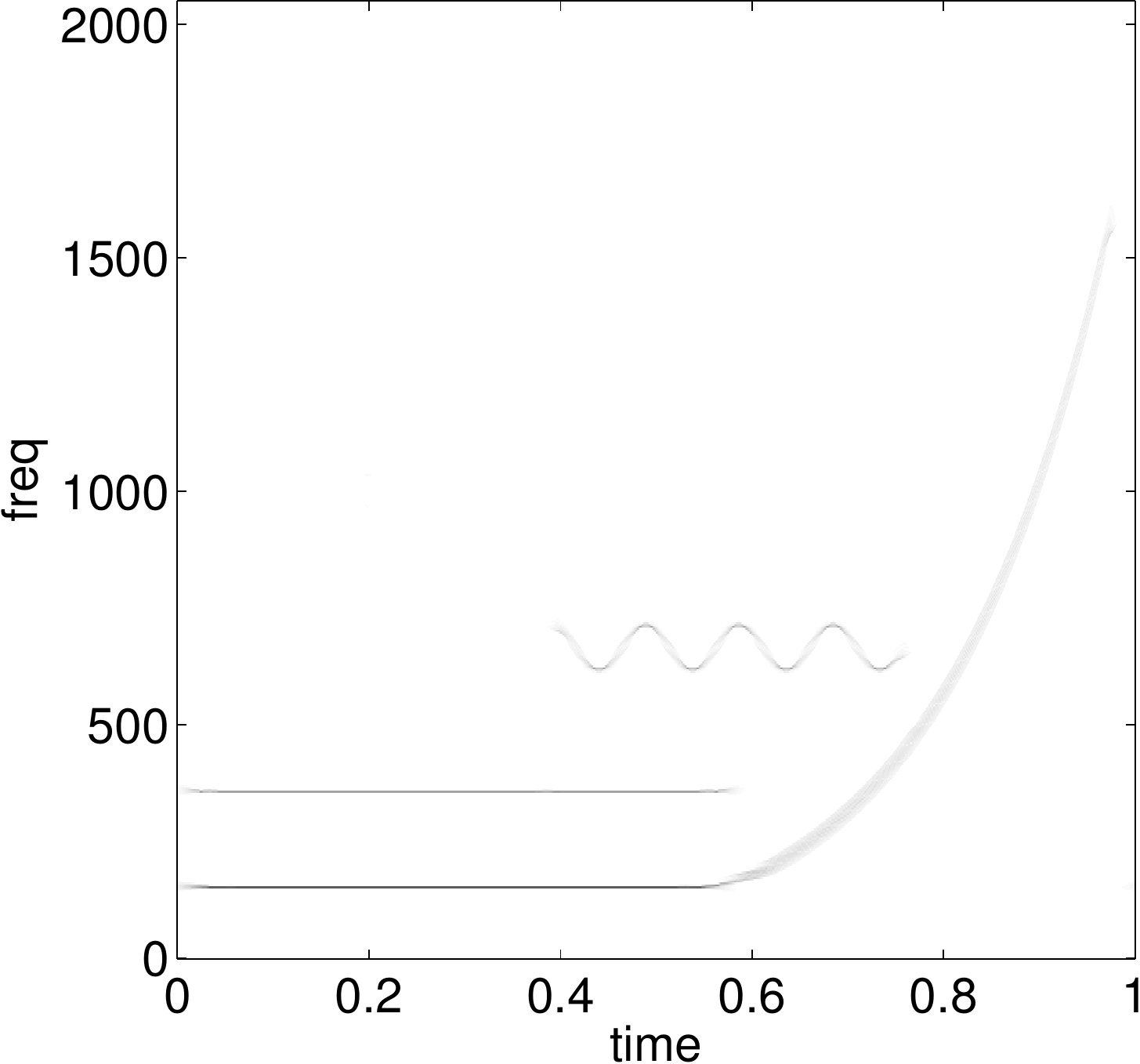}& \includegraphics[height=1.8in]{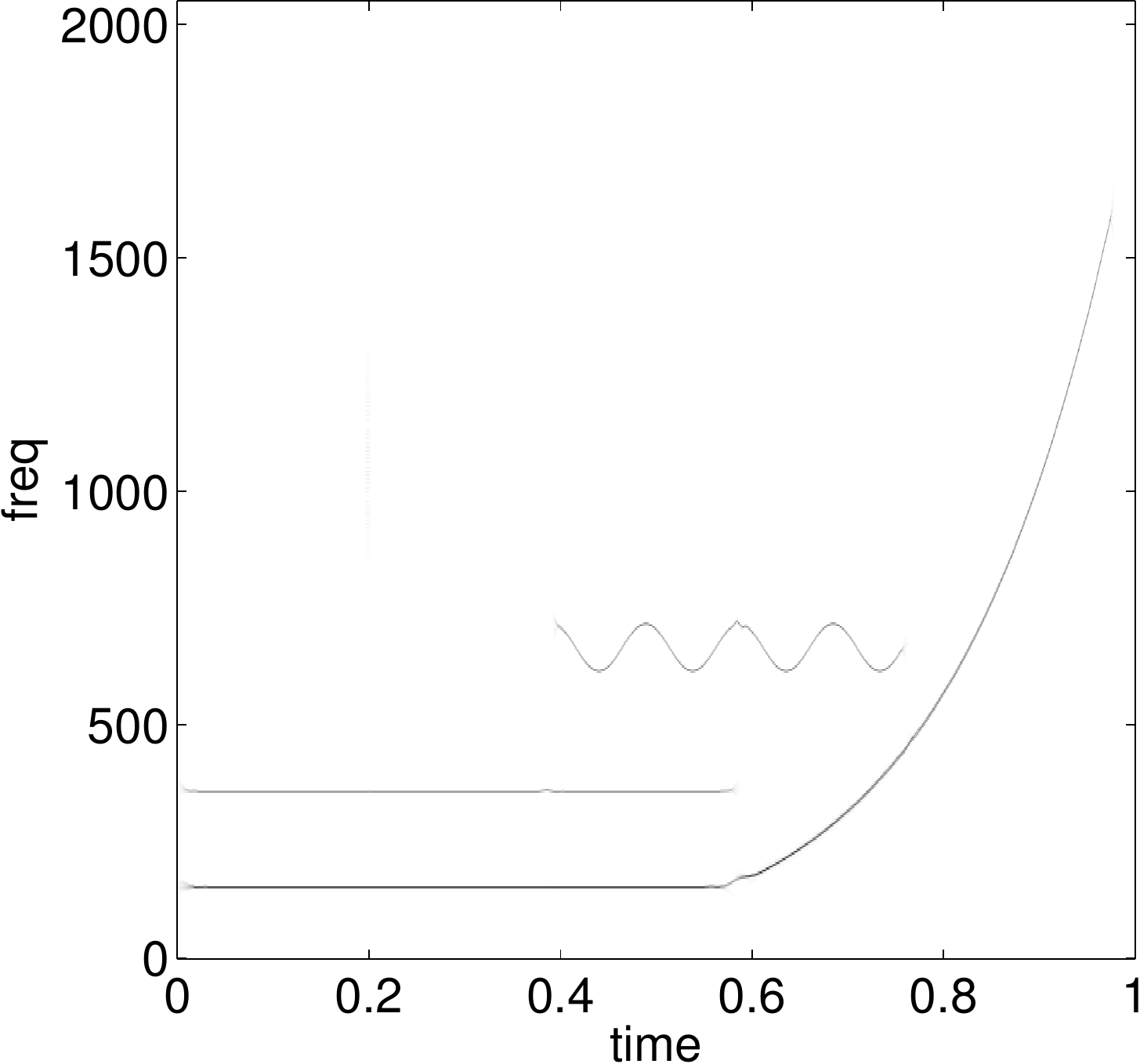}  & \includegraphics[height=1.8in]{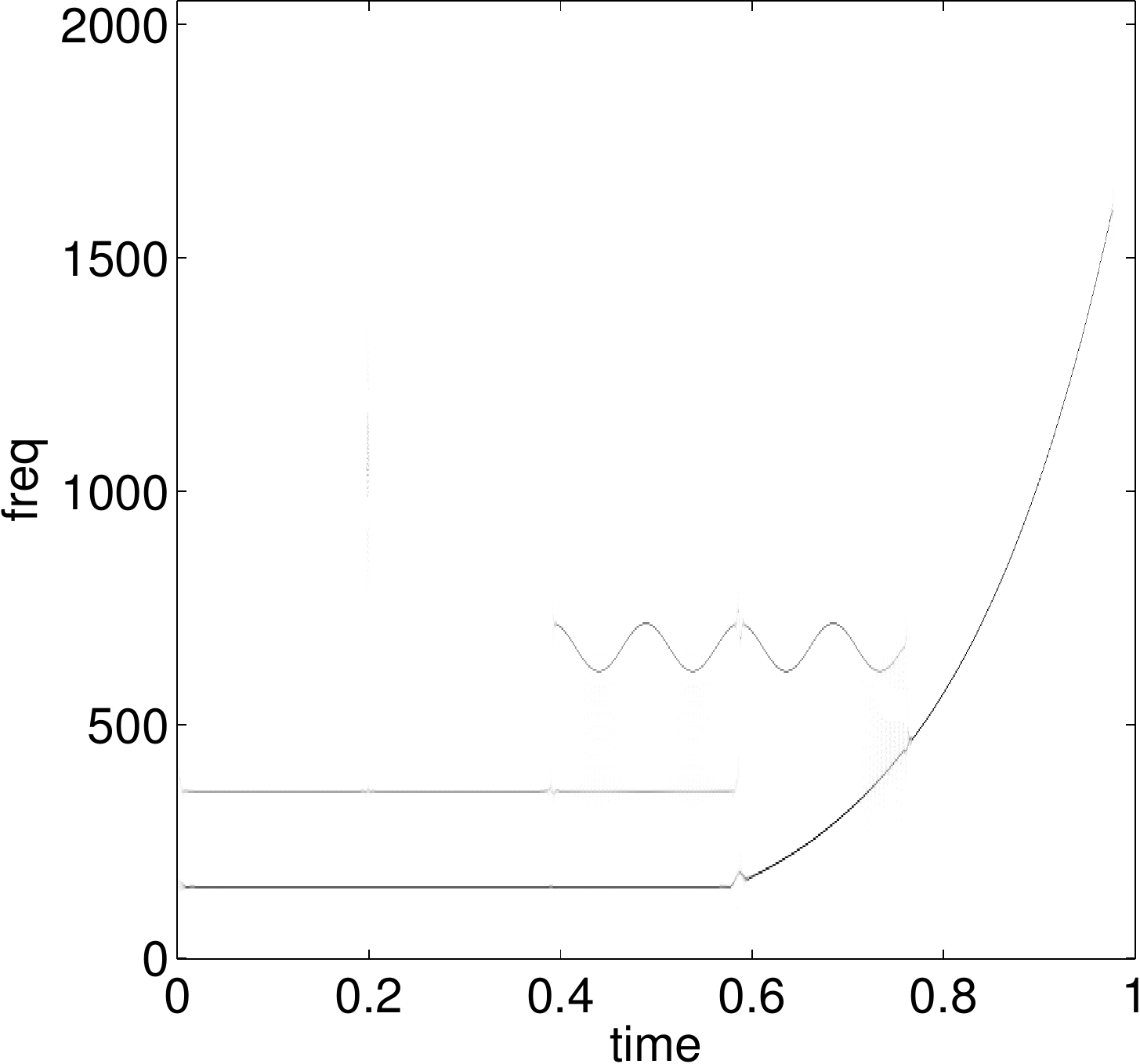}\\
   \includegraphics[height=1.8in]{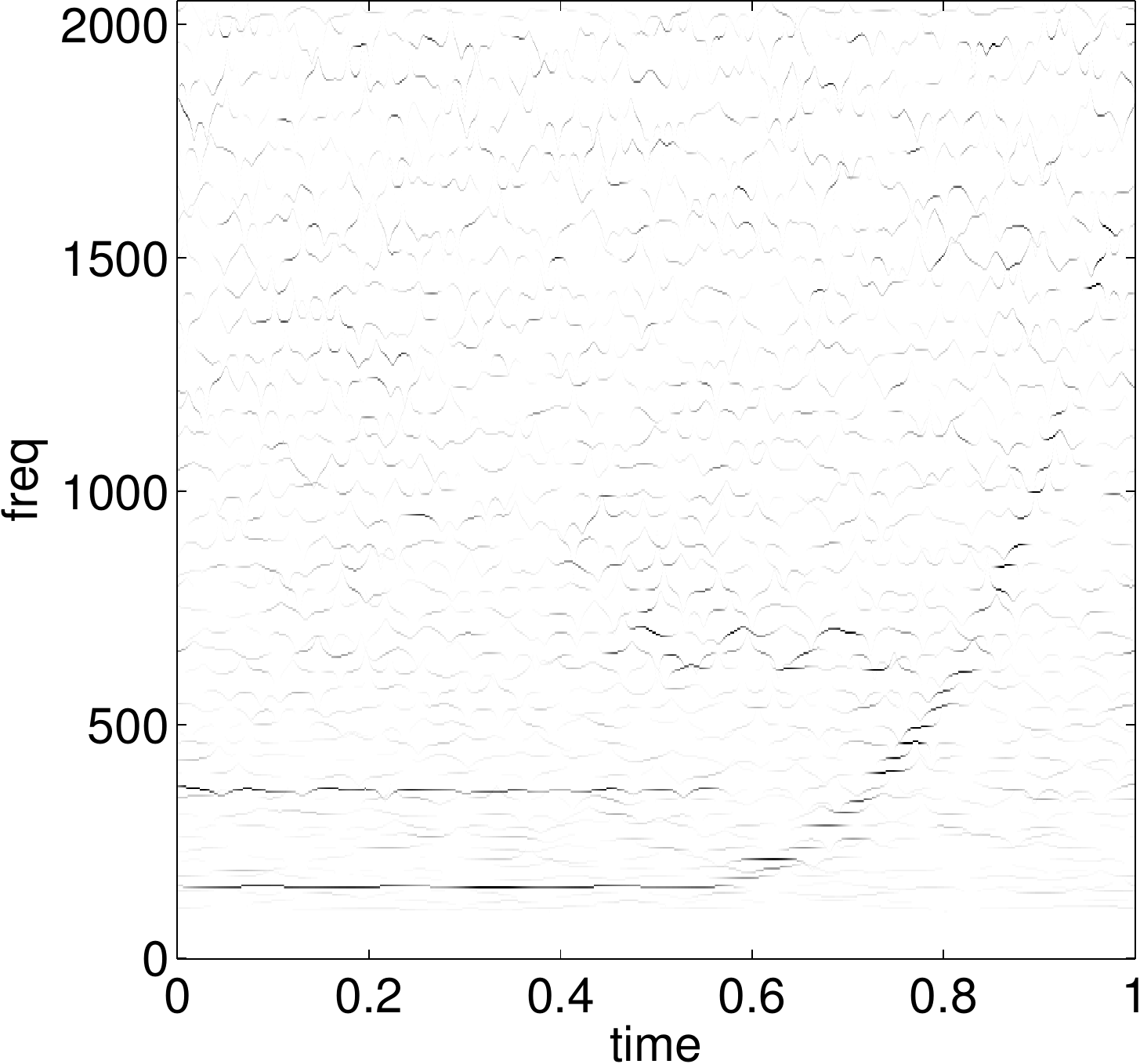}& \includegraphics[height=1.8in]{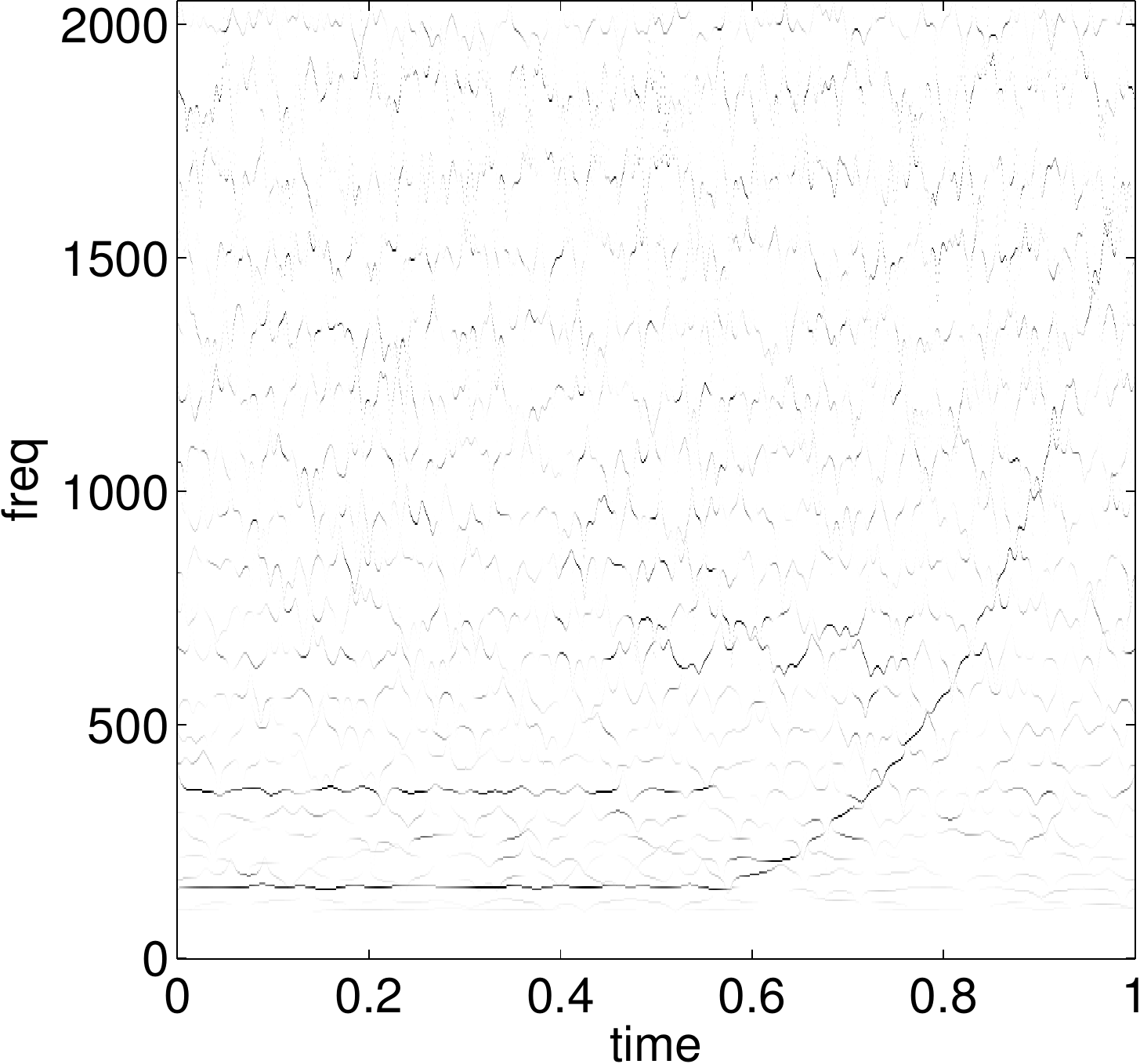}  & \includegraphics[height=1.8in]{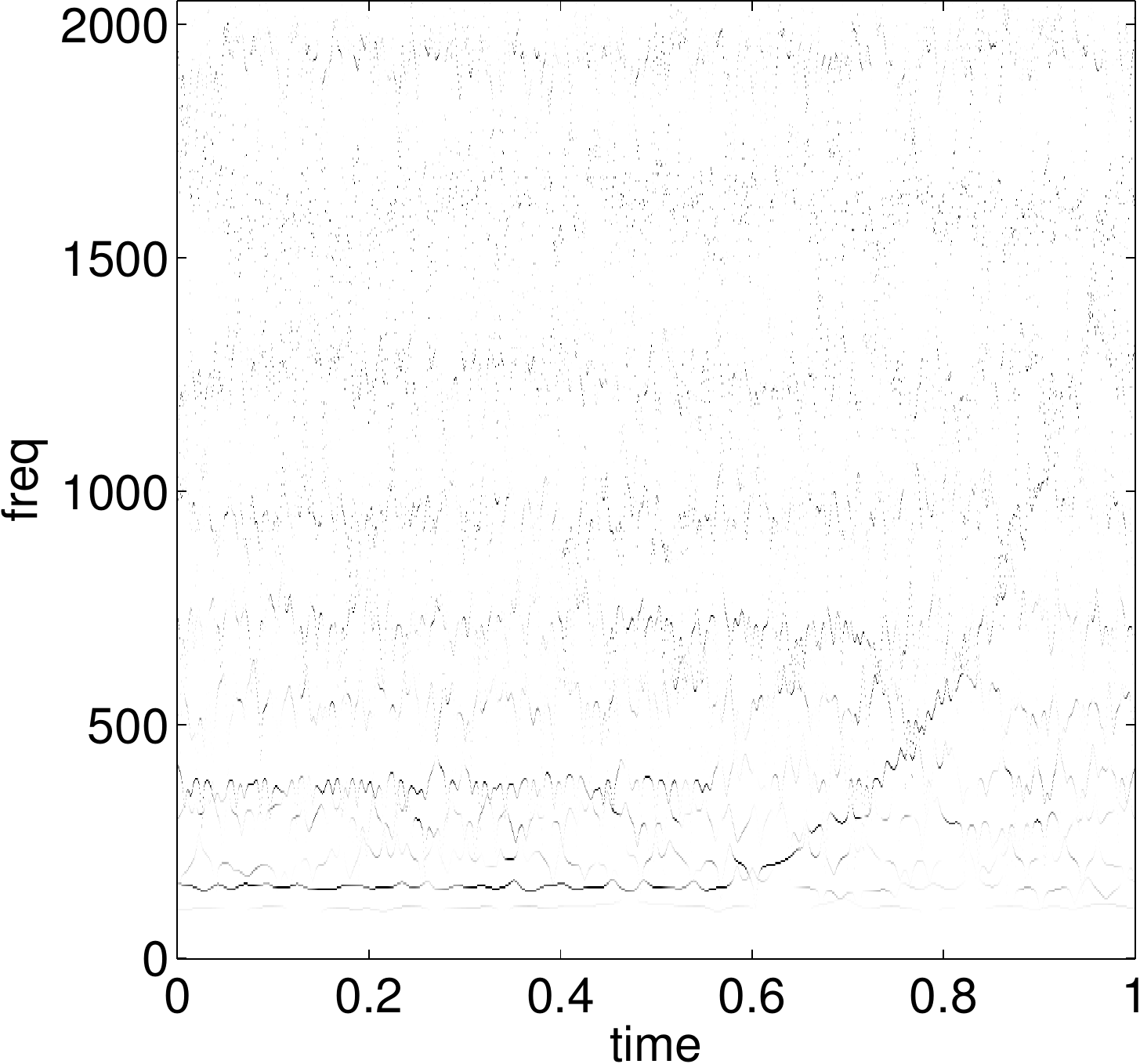}\\
  \includegraphics[height=1.8in]{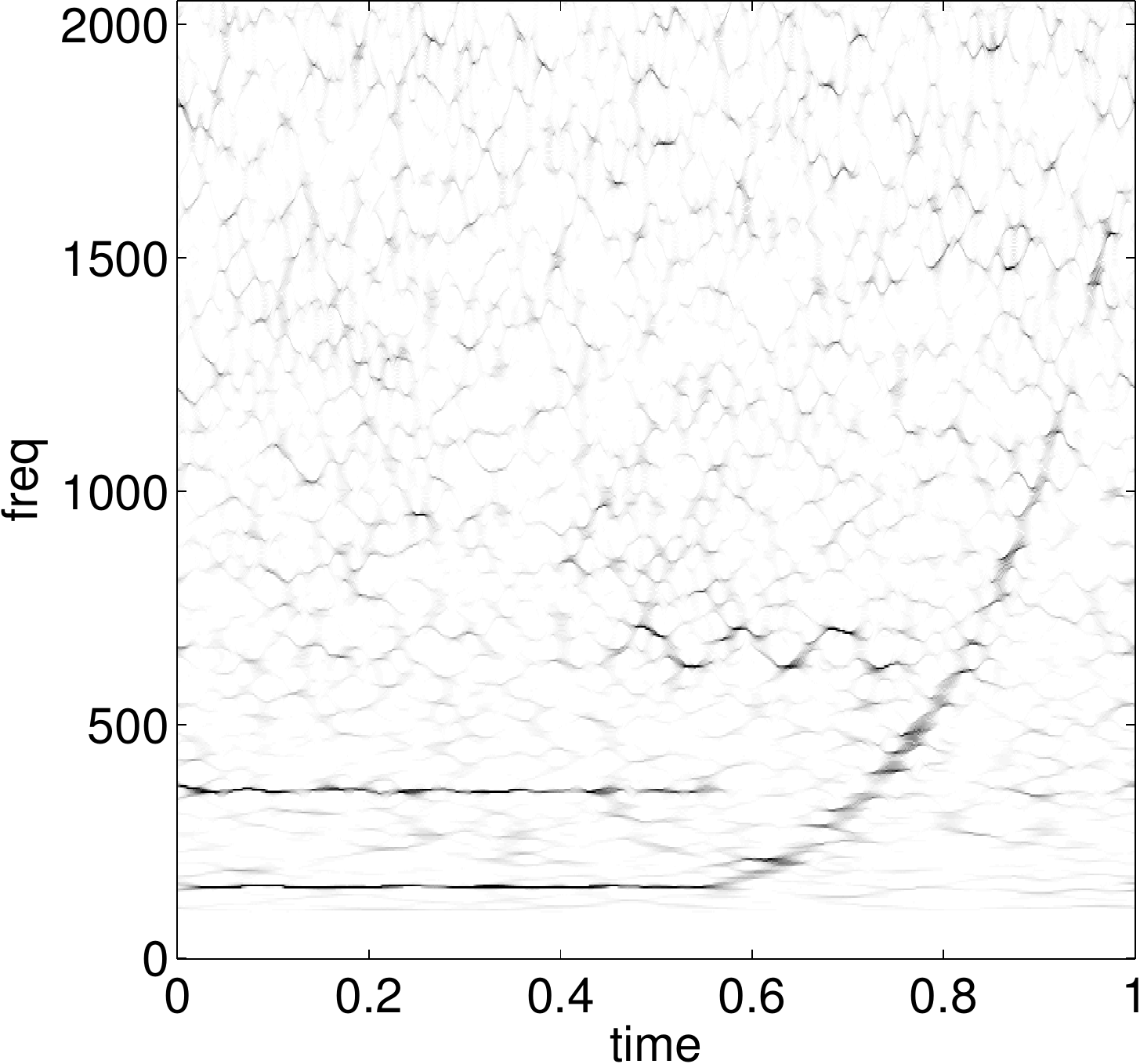}& \includegraphics[height=1.8in]{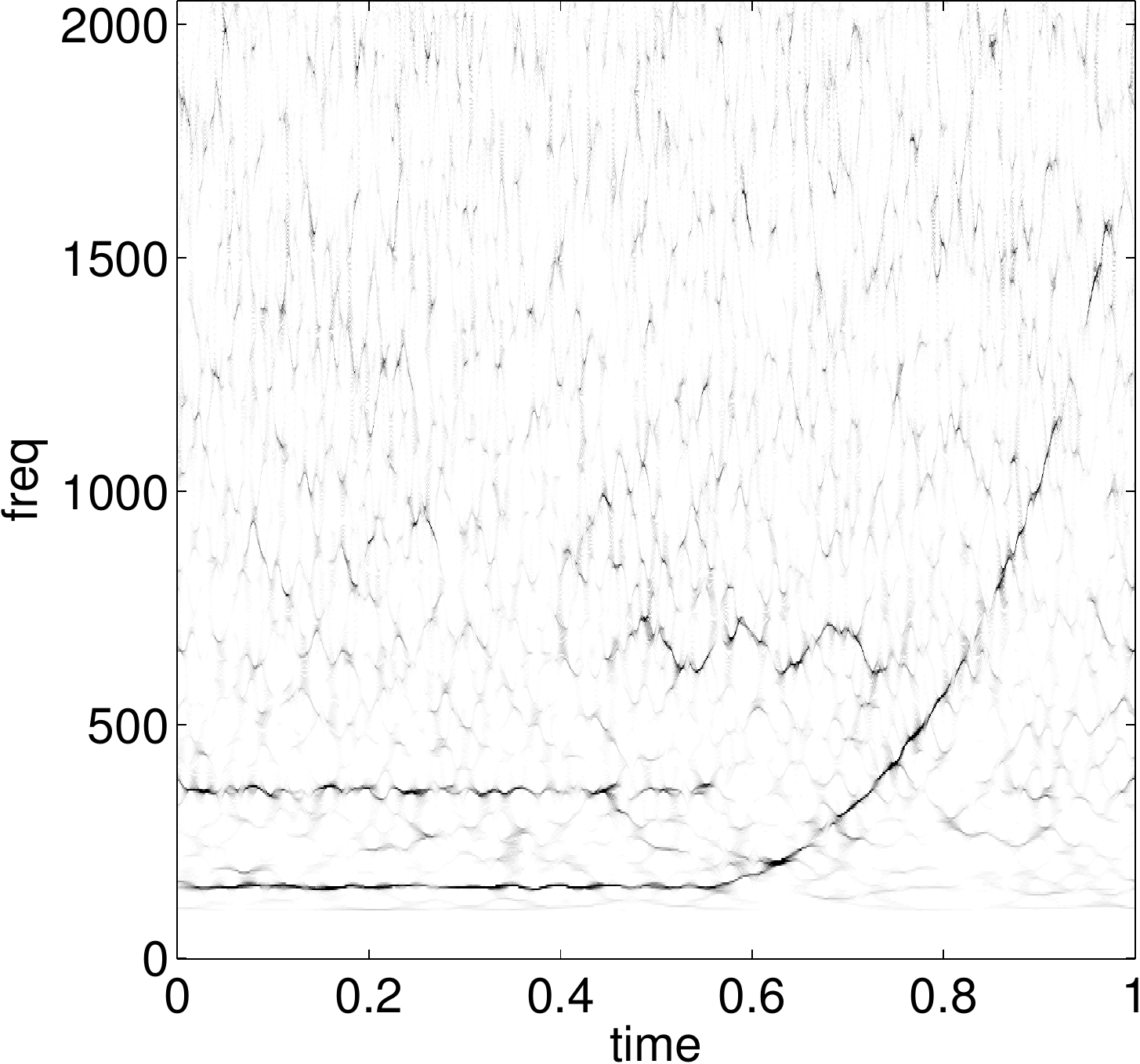}  & \includegraphics[height=1.8in]{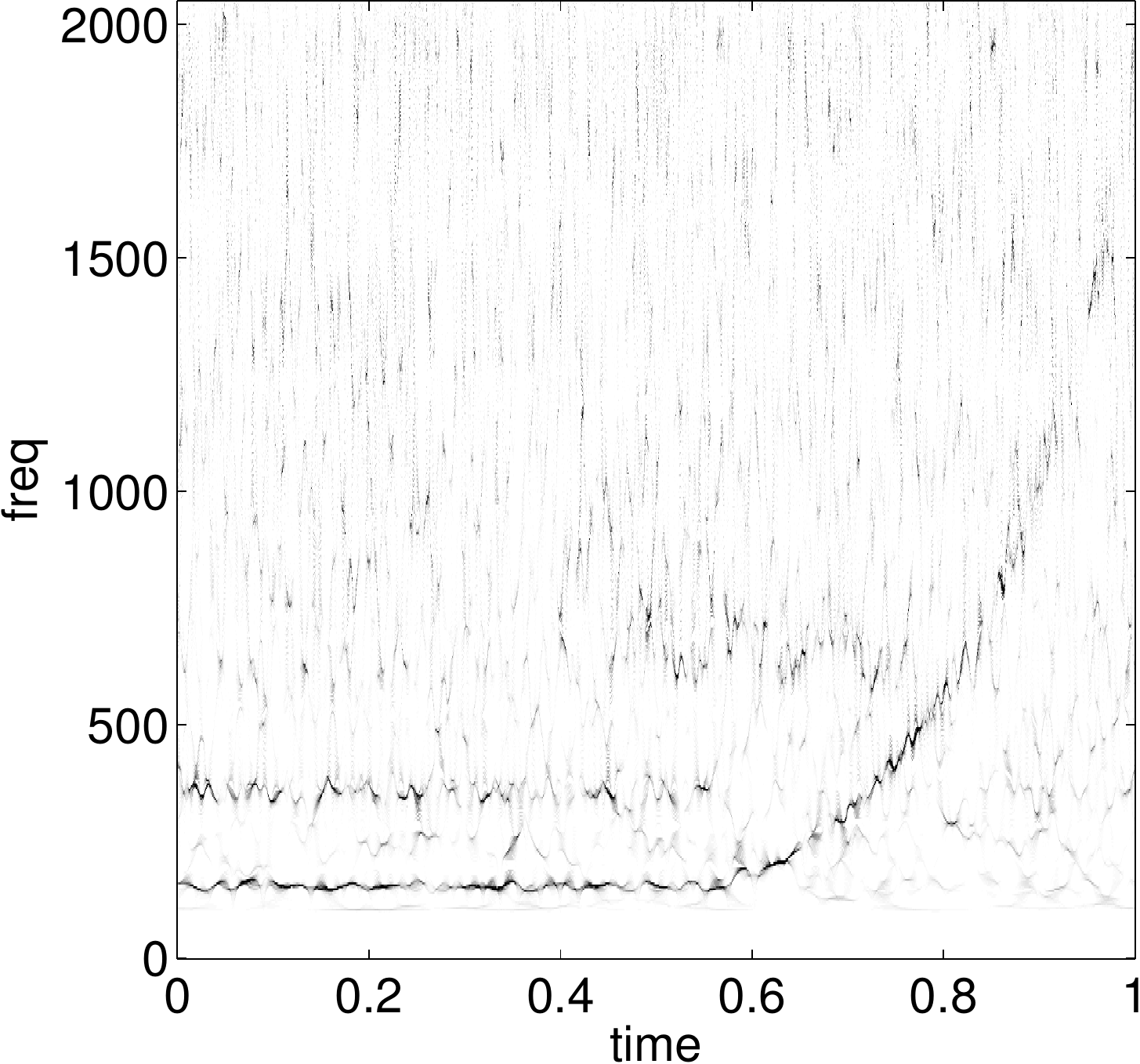}
    \end{tabular}
  \end{center}
  \caption{Synchrosqueezed energy distributions with $s=0.625$ (left column), $s=0.75$ (middle column) and $s=0.875$ (right column). In the first row, we apply the SSWPT to clean data. In the second row, the SSWPT with a smaller redundancy is applied to the noisy data with $0.75\mathcal{N}(0,1)$ noise in Figure \ref{fig:1Dsignal}. In the last row, a highly redundant SSWPT is applied to the same noisy data.}
\label{fig:1D}
\end{figure}

\begin{figure}[ht!]
  \begin{center}
    \begin{tabular}{ccc}
  \includegraphics[height=1.6in]{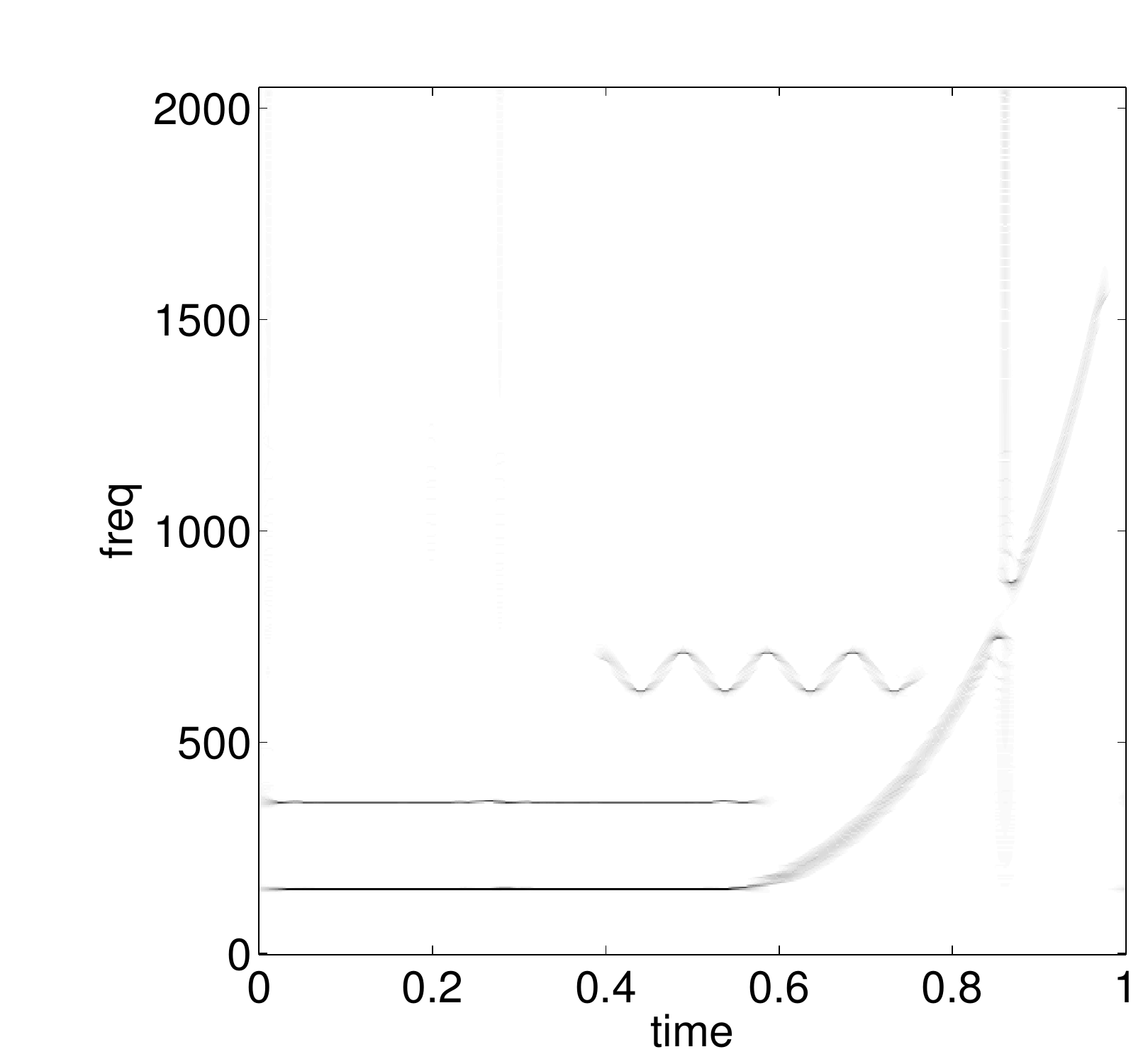}& \includegraphics[height=1.6in]{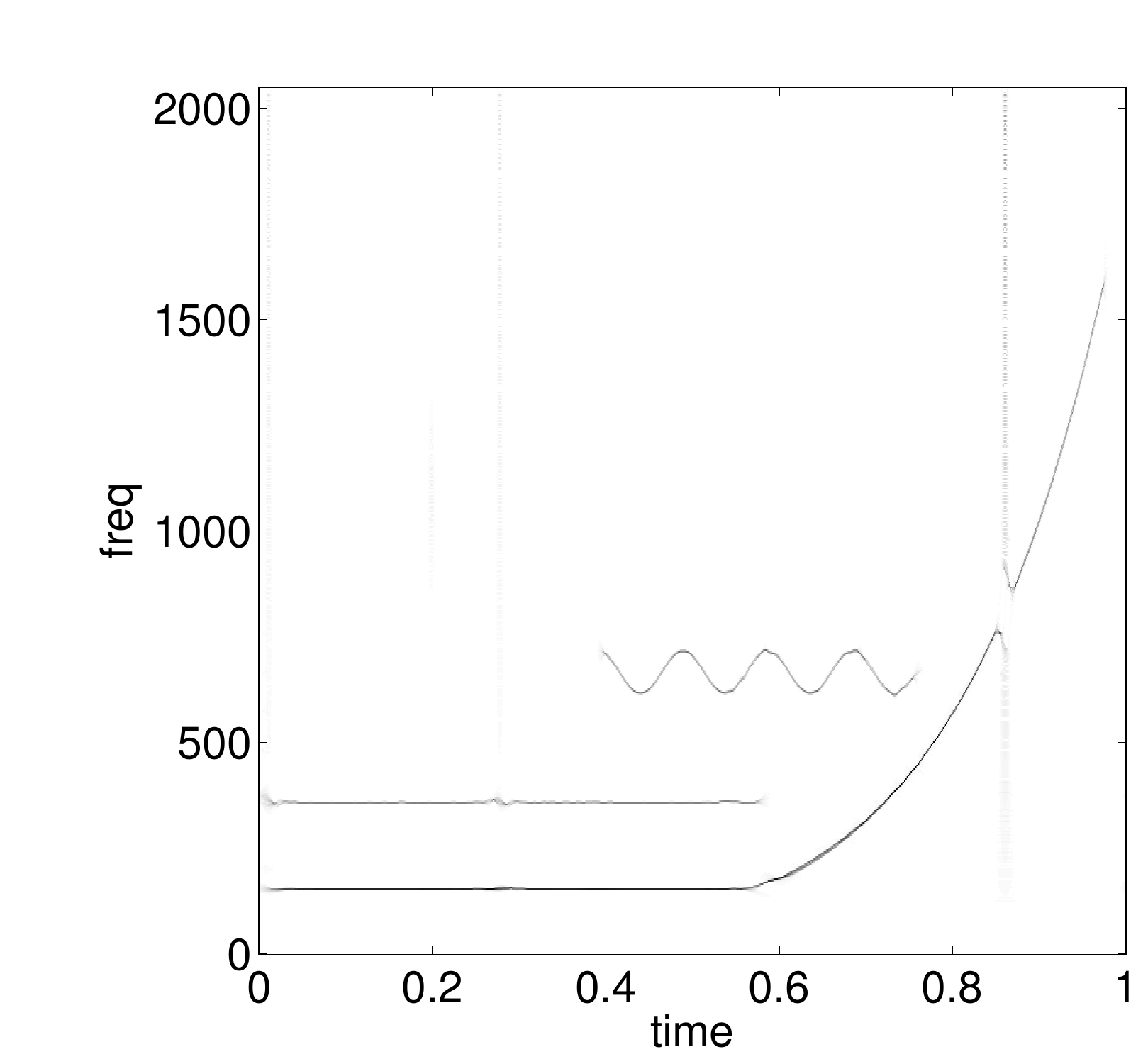}  & \includegraphics[height=1.6in]{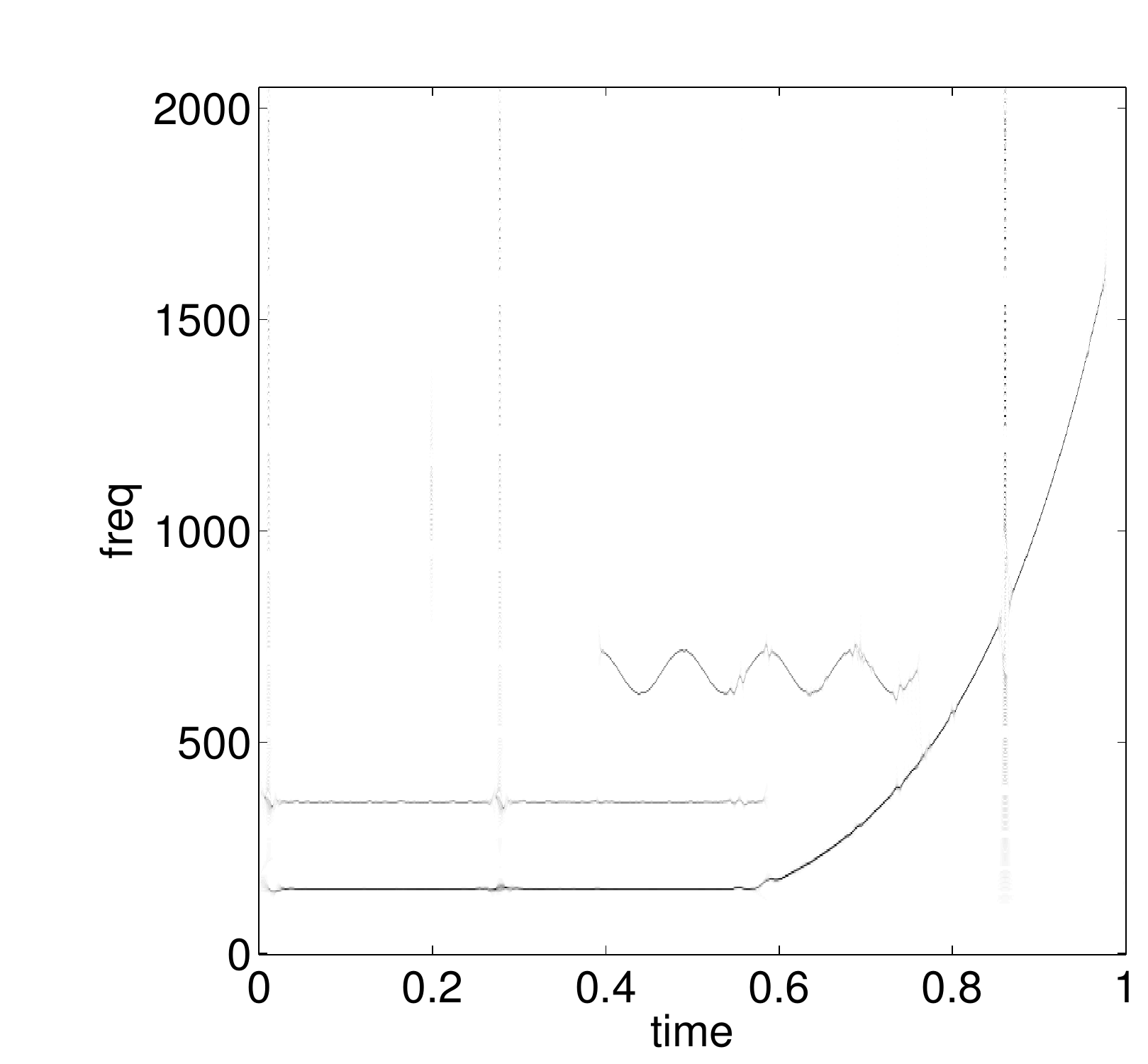}
    \end{tabular}
  \end{center}
  \caption{Synchrosqueezed energy distributions with $s=0.625$ (left), $s=0.75$ (middle) and $s=0.875$ (right) using highly redundant SSWPTs. The synchrosqueezed energy with a smaller $s$ is smoother and the influence of impulsive noise is weaker.}
\label{fig:1D2}
\end{figure}

\begin{figure}[ht!]
  \begin{center}
    \begin{tabular}{cc}
   \includegraphics[height=2in]{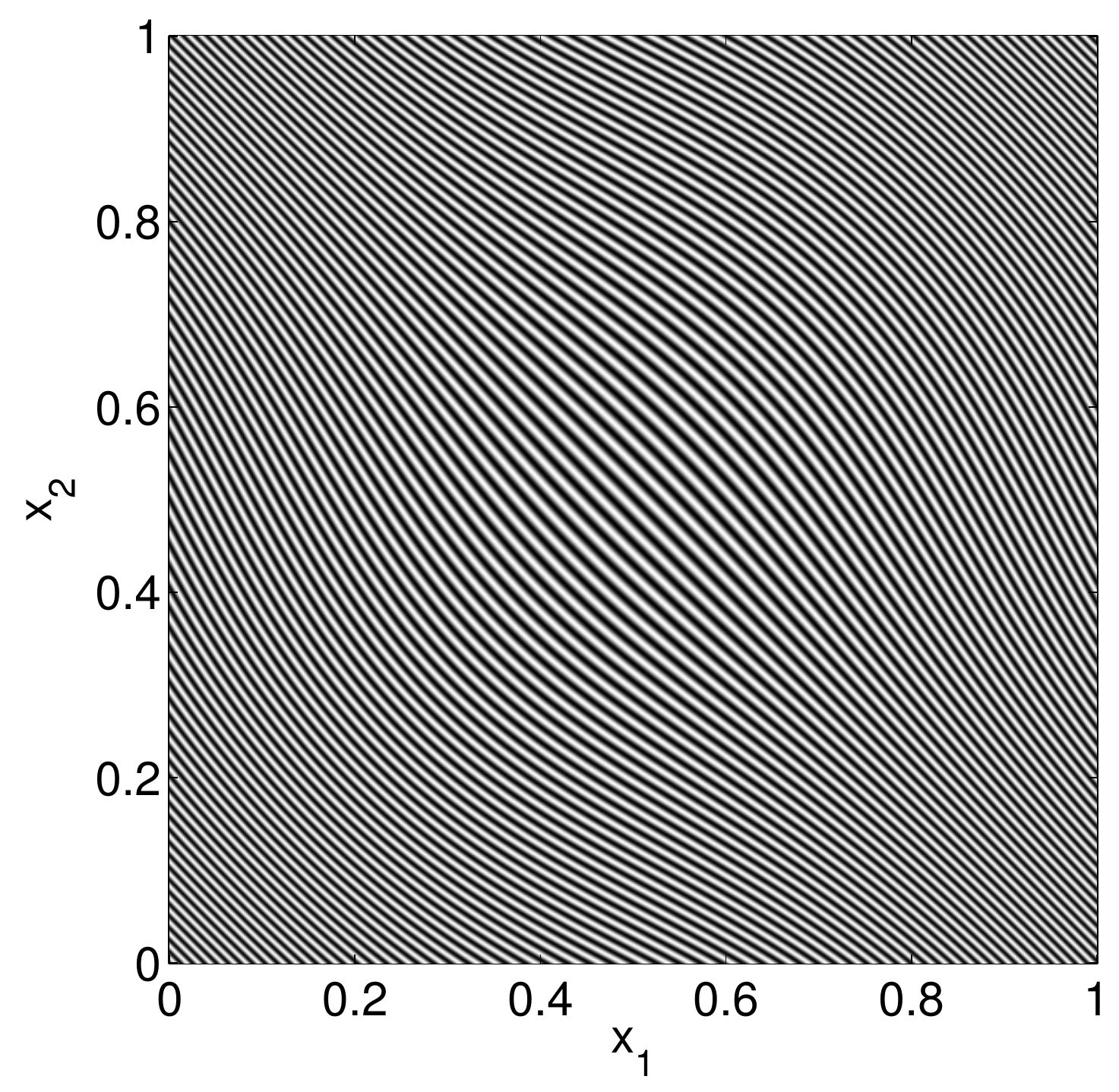}& \includegraphics[height=2in]{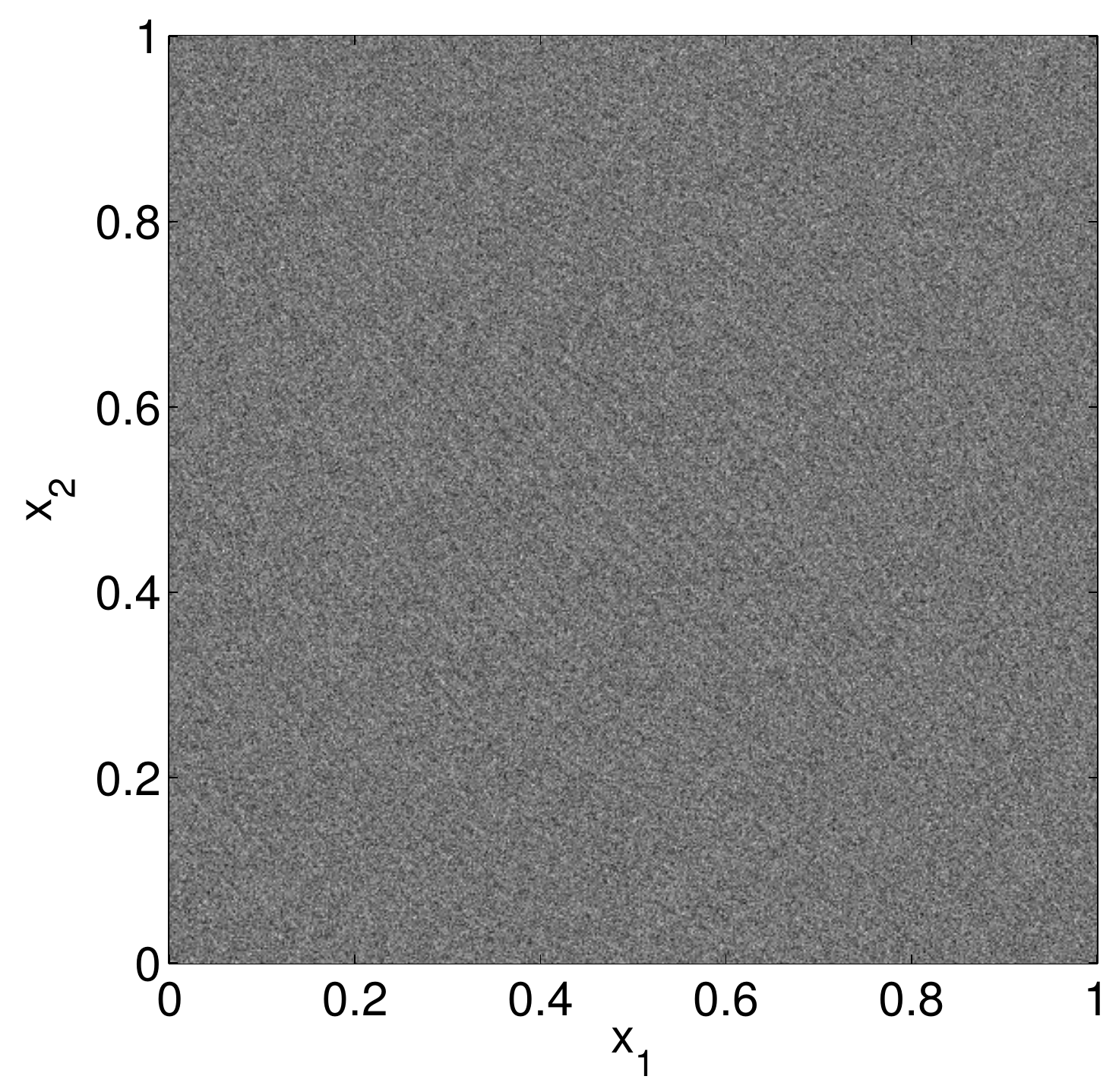} 
    \end{tabular}
  \end{center}
  \caption{Left: A $2$D noiseless wave-like component. Right: A noisy wave-like component generated with white Gaussian noise $5\mathcal{N}(0,1)$.}
\label{fig:2Dsignal}
\end{figure}

\vspace{1cm}
\textbf{Two-dimensional examples:}
\vspace{0.5cm}

We now explore the performance of the $2$D SSWPT 
using a single wave-like component in Figure \ref{fig:2Dsignal}. The function 
\begin{equation}
\label{eqn:2deximg}
f(x)=e^{2\pi i\left( 60\left(x_1+0.05\sin(2\pi x_1) \right) +60\left(x_2+0.05\sin(2\pi x_2)\right)\right)}
\end{equation}
is uniformly sampled in $[0,1]^2$ with a sampling rate $512$ Hz and is disturbed by additive white Gaussian noise $5\mathcal{N}(0,1)$. The $2$D SSWPTs with $s=0.625$, $0.75$ and $0.875$ are applied to this noisy example and their results are shown in Figure \ref{fig:2D}. Since the synchrosqueezed energy distribution $T_f(v_1,v_2,x_1,x_2)$ of an image is a function in $\R^4$, we fix $x_2=0$, stack the results in $v_2$, and visualize $\int_{\R}T_f(v_1,v_2,x_1,0)dv_2$.
The results in Figure \ref{fig:2D} again validate the theoretical conclusion in Theorem \ref{thm:2d3} that a smaller scaling parameter $s$ and a higher redundancy yield to a better SST for noisy data. 

It is interesting that a band-limited SST can also provide better statistical stability if the range of instantaneous frequencies/local wave vectors is known a priori. Let us justify this idea with the follow example. We apply the band-limited SSWPT to the $2$D noisy image in Figure \ref{fig:2D} and present the results in the last row of Figure \ref{fig:2D}. Comparing to the results in the second row of Figure \ref{fig:2D}, the band-limited SSWPT clearly outperforms the original SSWPT. 

\begin{figure}[ht!]
  \begin{center}
    \begin{tabular}{ccc}
   \includegraphics[height=1.6in]{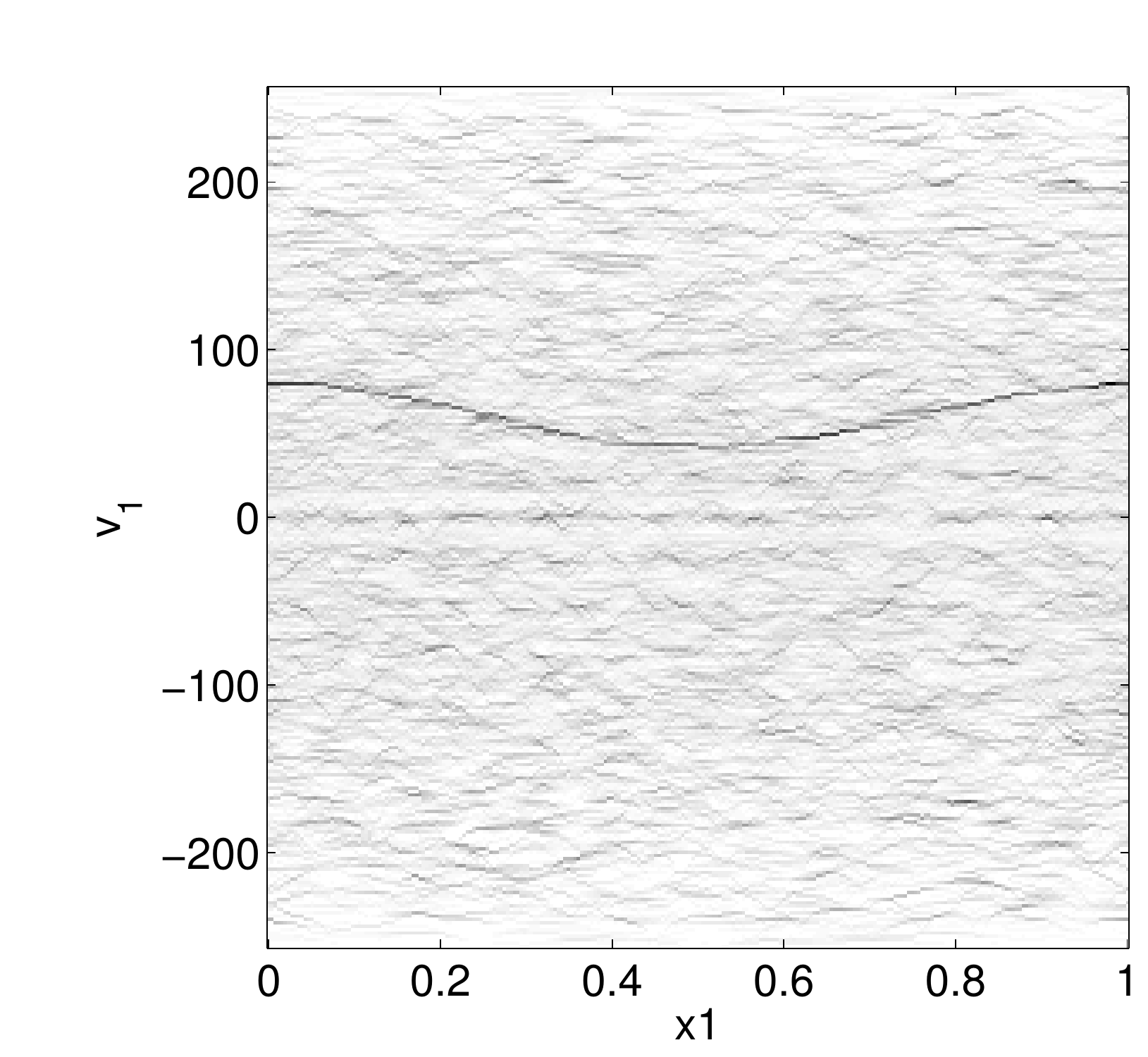}& \includegraphics[height=1.6in]{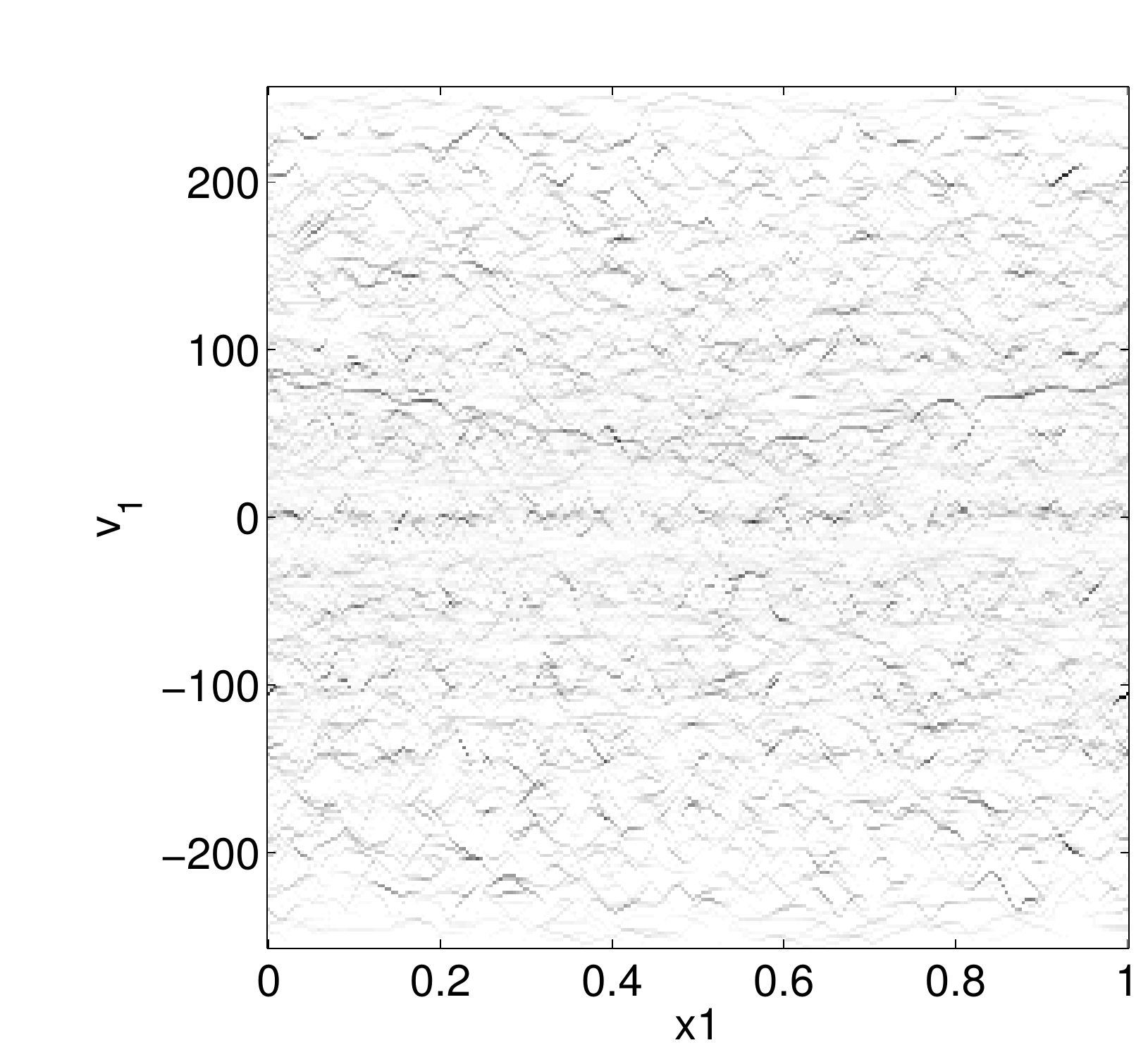}  & \includegraphics[height=1.6in]{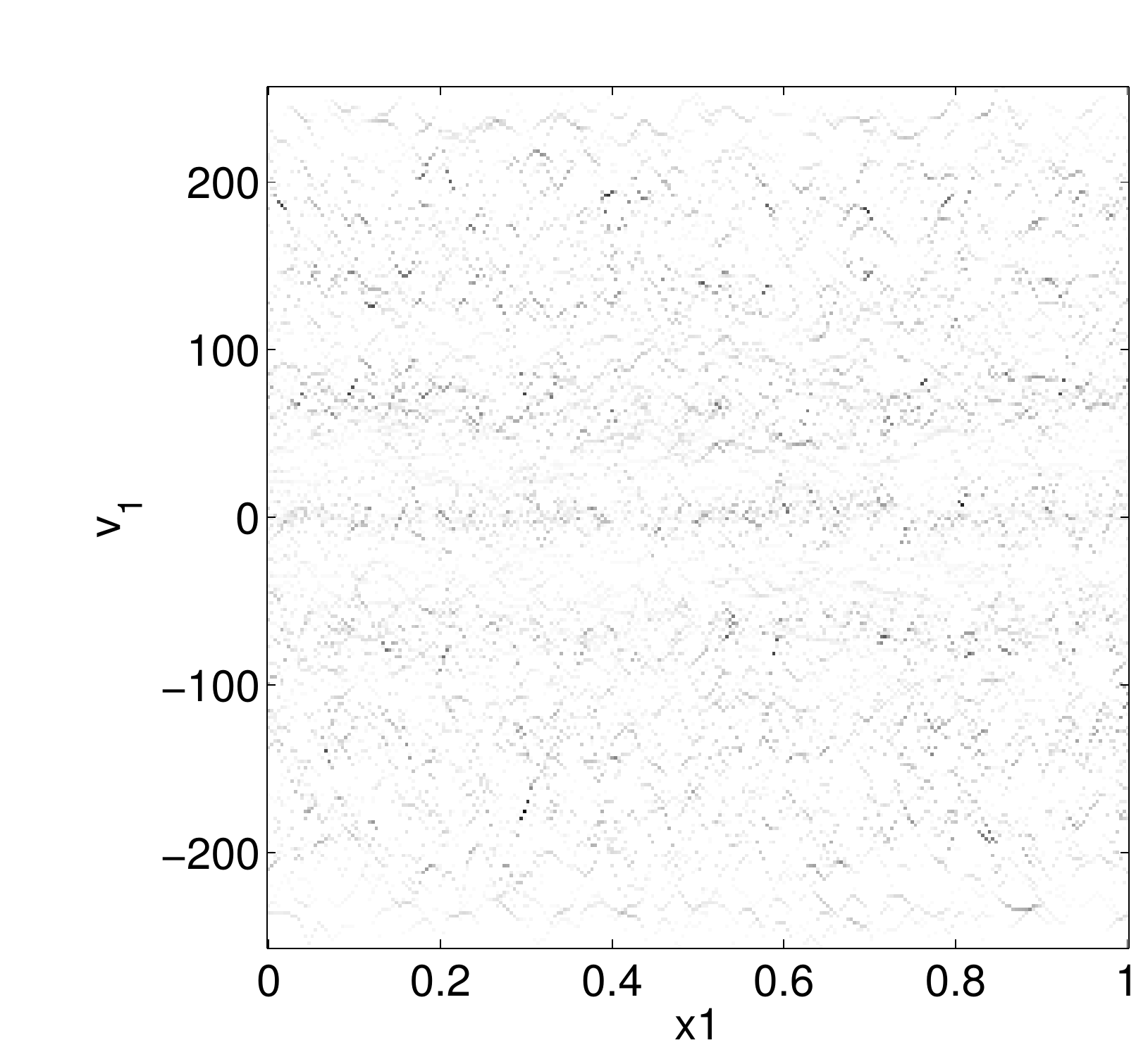}\\
   \includegraphics[height=1.6in]{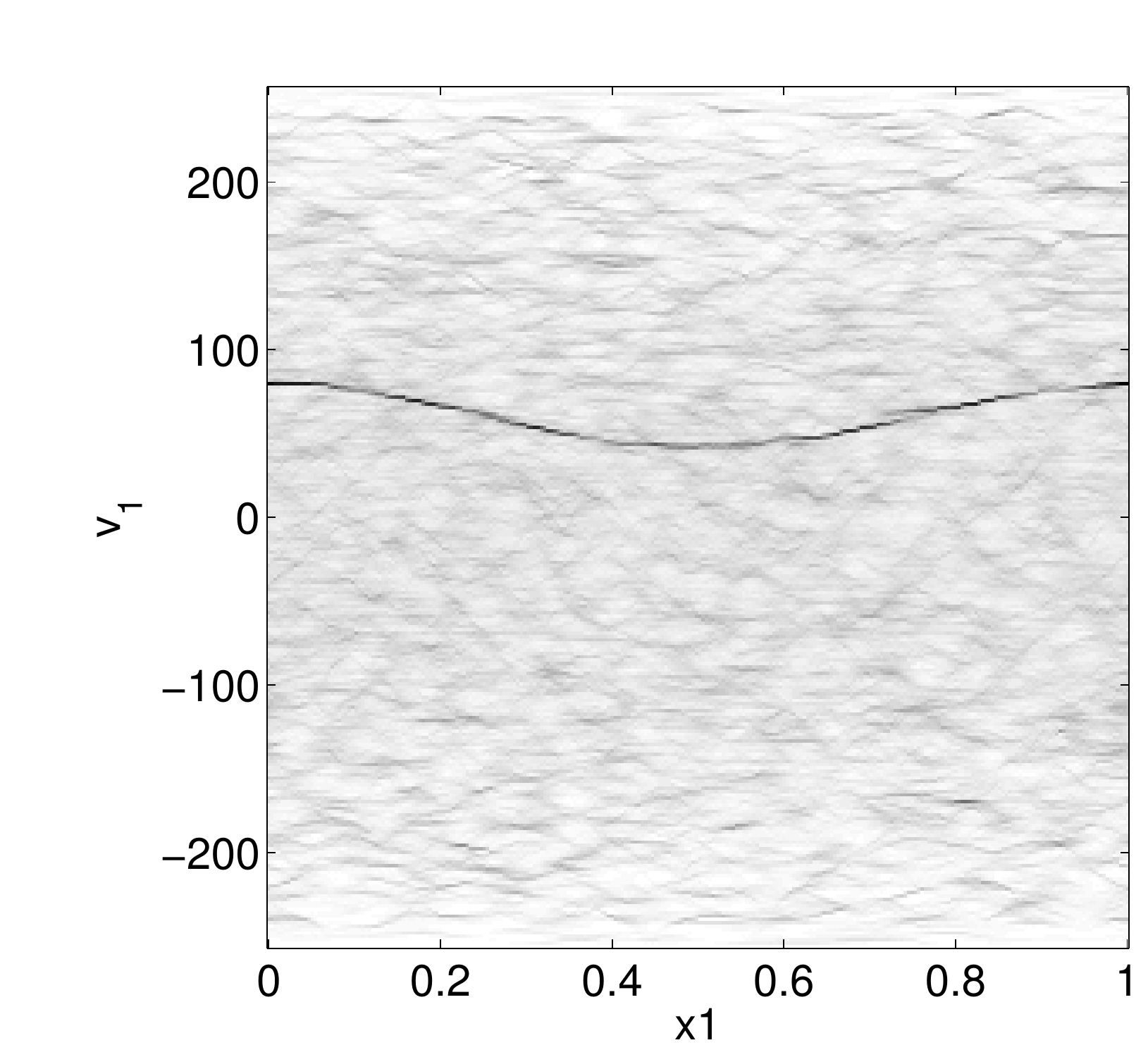}& \includegraphics[height=1.6in]{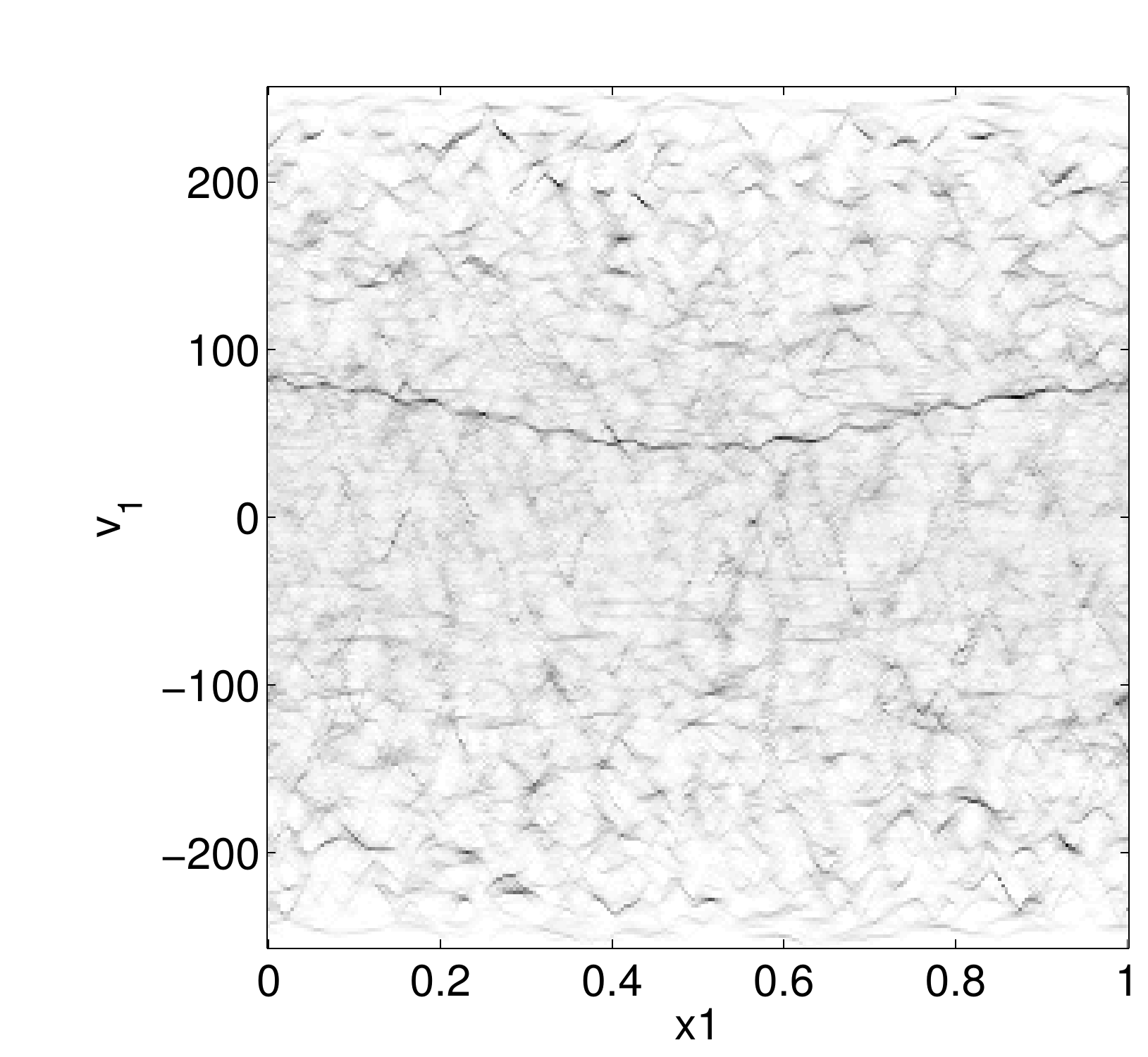}  & \includegraphics[height=1.6in]{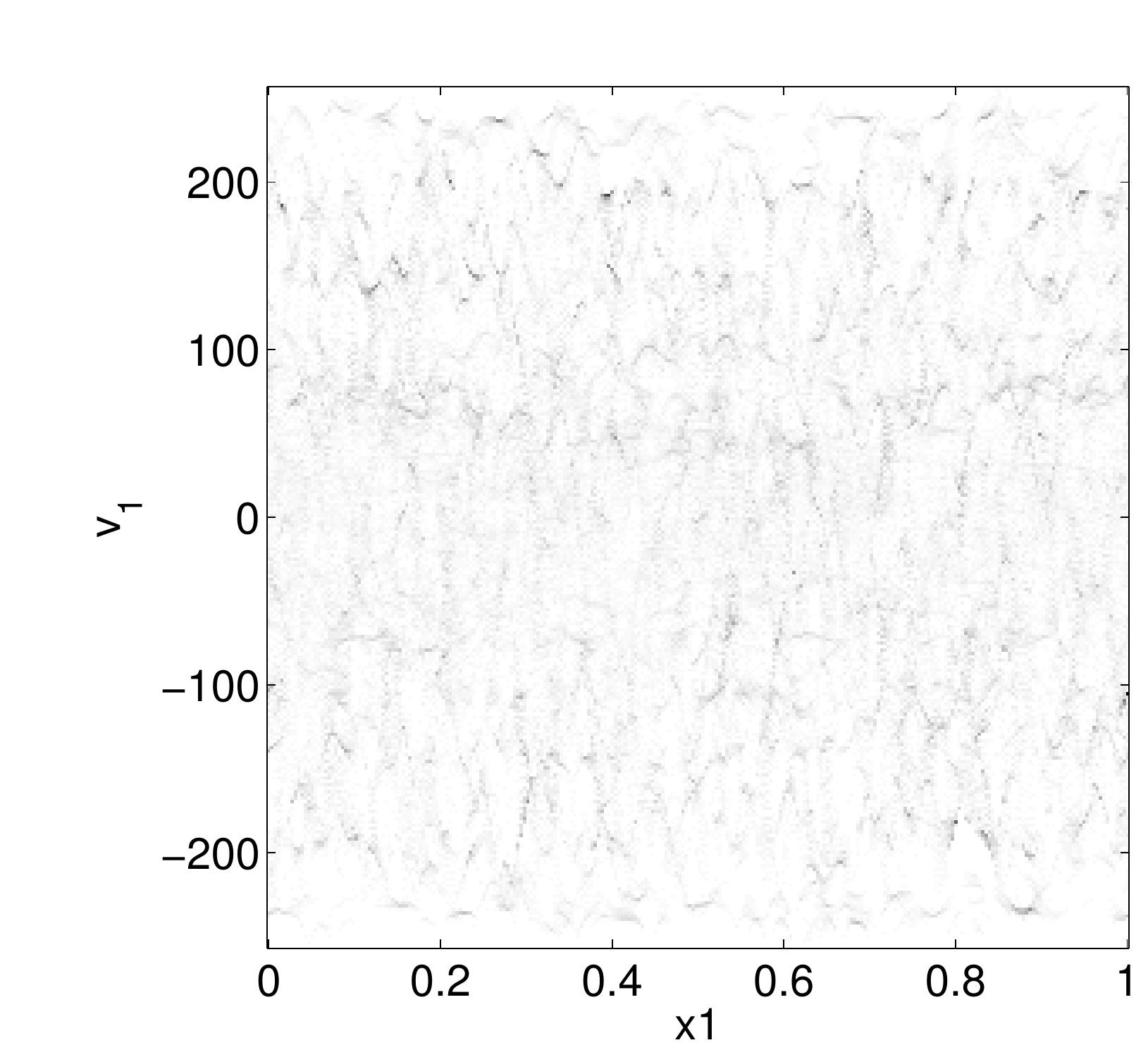}\\
      \includegraphics[height=1.6in]{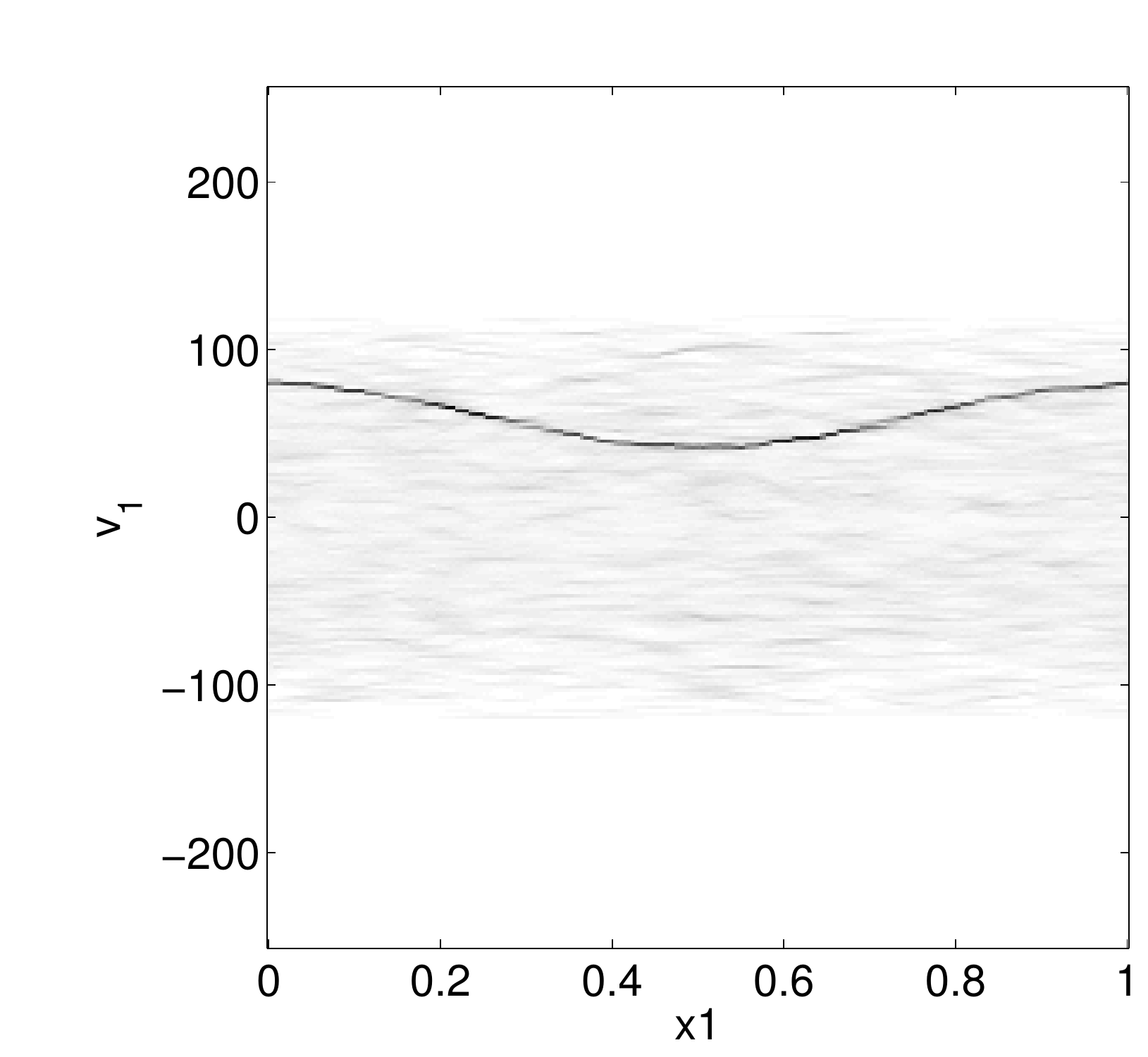}& \includegraphics[height=1.6in]{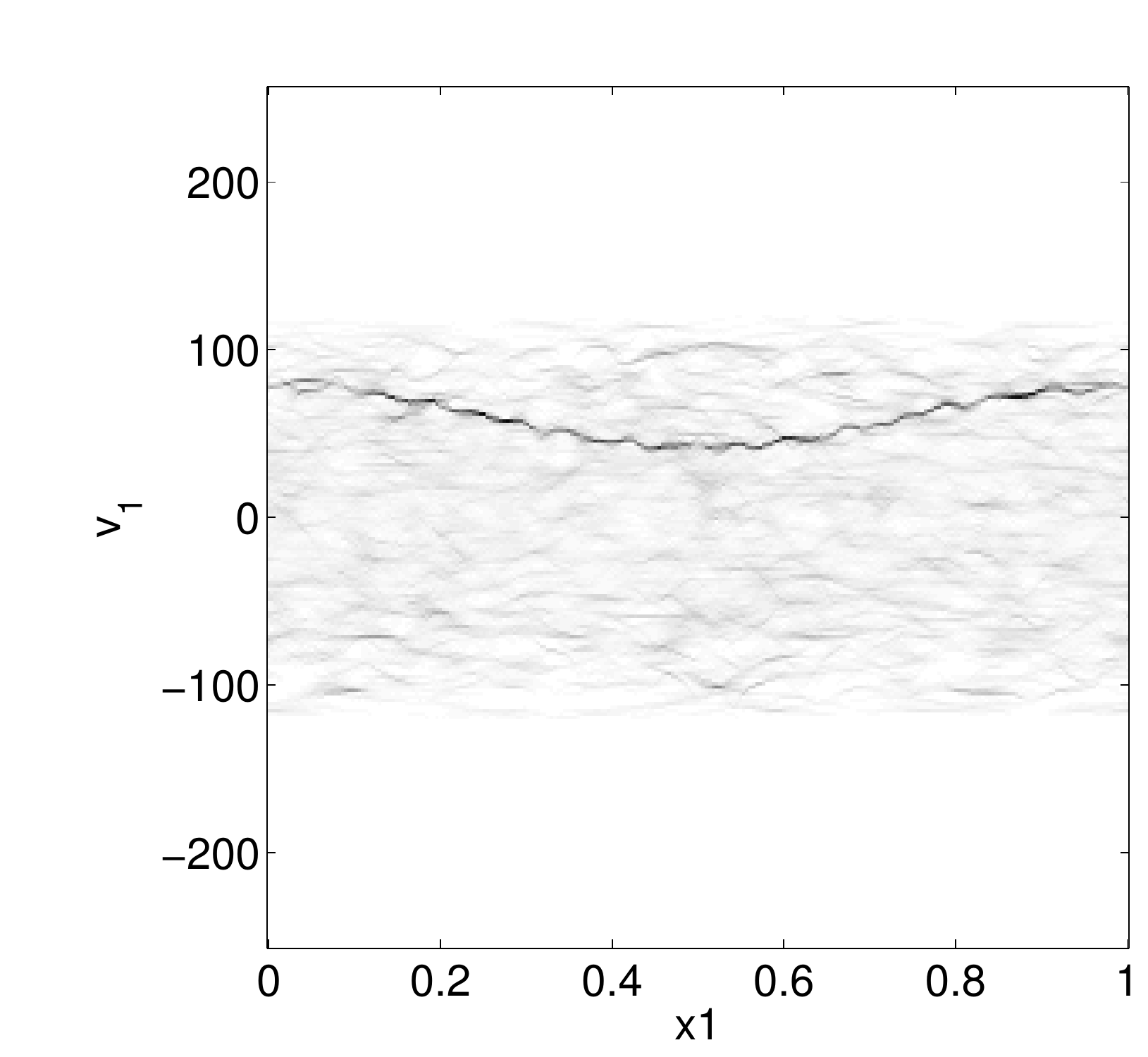}  & \includegraphics[height=1.6in]{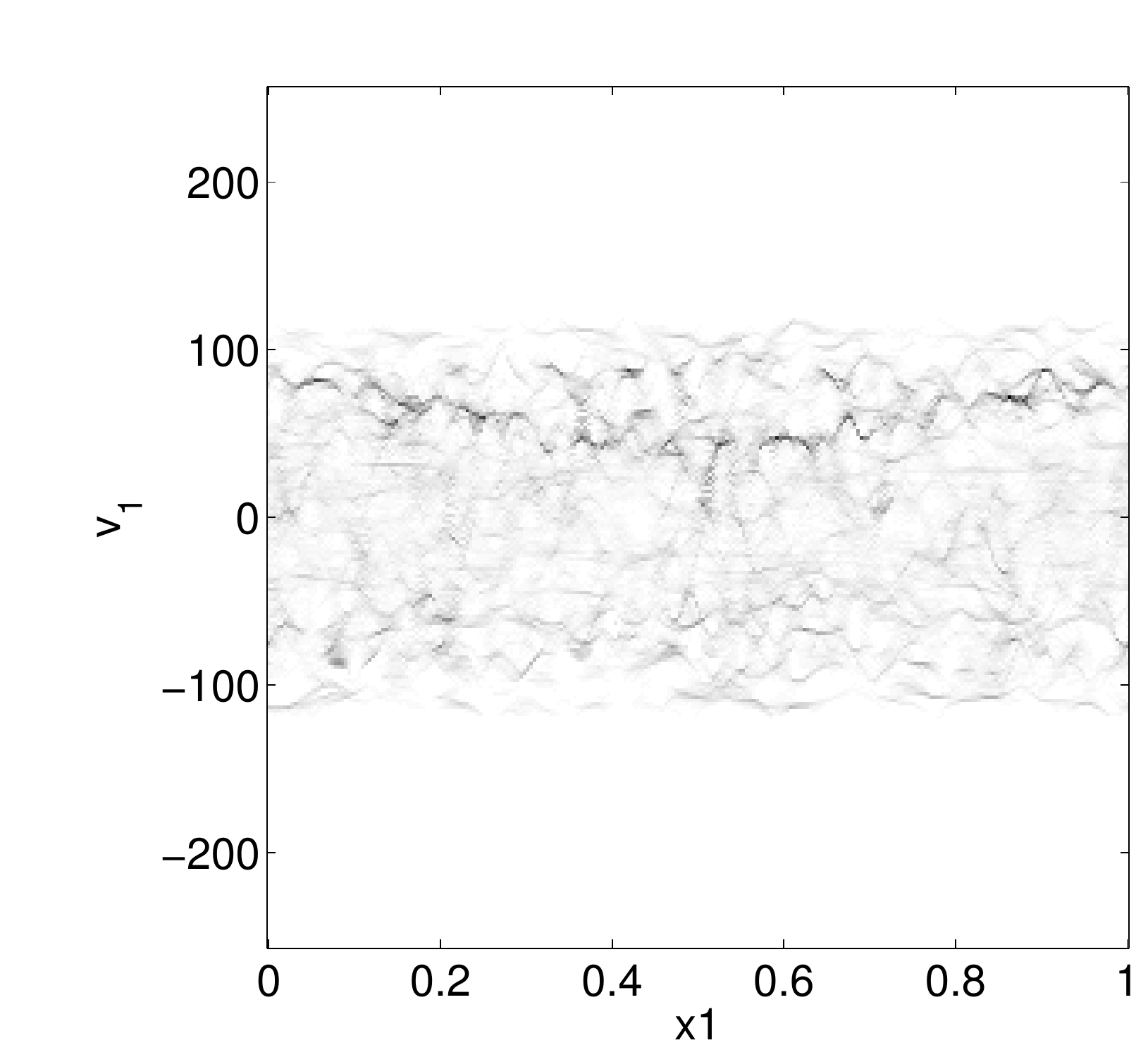}
    \end{tabular}
  \end{center}
  \caption{Stacked synchrosqueezed energy distribution $\int_{\R}T_f(v_1,v_2,x_1,0)dv_2$ of the noisy $2$D signal in Figure \ref{fig:2Dsignal}. From left to right, $s=0.625$, $0.75$ and $0.875$. From top to bottom: $red=1$, $red=10$, and $red=10$ with a restricted frequency band from $20$ to $120$ Hz.}
\label{fig:2D}
\end{figure}

\vspace{1cm}
\textbf{Component test}
\vspace{0.5cm}

Here we present an example to validate the last observation in Section \ref{subsec:idea}. Suppose we look at a region in the time-frequency or phase space domain and we know there might be only one IMT in this region. This assumption is reasonable because after the SST people might be interested in the synchrosqueezed energy in a particular region: is this corresponding to a component or just heavy noise? A straightforward solution is that, at each time or space grid point, we only reassign those coefficients with the largest magnitude. By Theorem \ref{thm:2d3}, if there is an IMT, we can obtain a sketch of its instantaneous frequency or  local wave vector with a high probability. If there was only noise, we would obtain random reassigned energy with a high probability. Using this idea, we apply the band-limited SSWPT with $s=0.625$ and $red=10$ to a noisy version of the image in Figure \ref{fig:2Dsignal} left. From left to right, Figure \ref{fig:max} shows the results of a noisy image \eqref{eqn:2deximg} with $5\mathcal{N}(0,1)$ noise, a noisy image \eqref{eqn:2deximg} with $10\mathcal{N}(0,1)$ noise, and an image with only noise, respectively. A reliable sketch of the  local wave vector is still visible even if the input image is highly noisy.

\begin{figure}[ht!]
  \begin{center}
    \begin{tabular}{ccc}
     \includegraphics[height=1.6in]{2Dm17red10IsBand1T1S1.pdf} &  \includegraphics[height=1.6in]{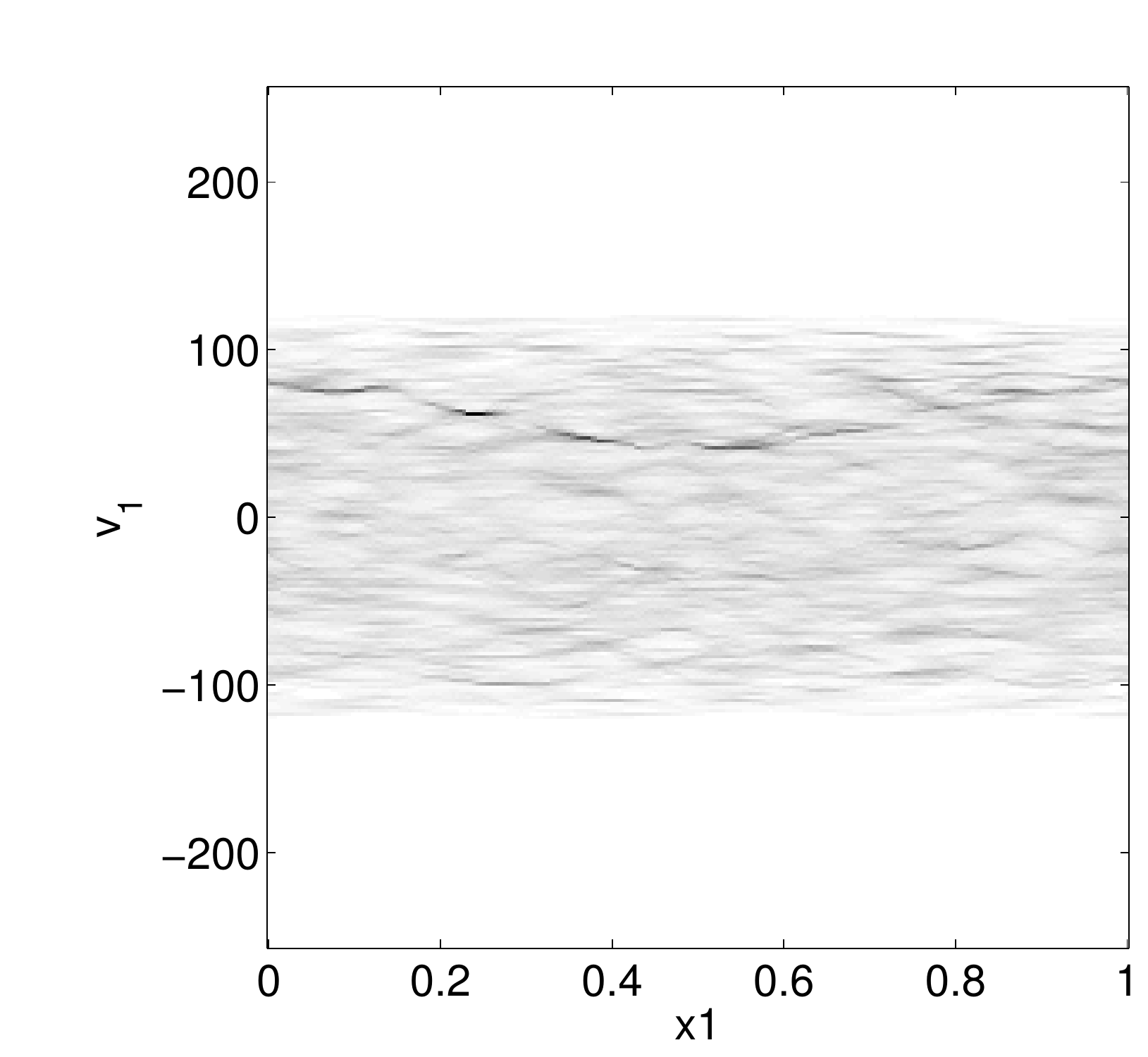} &  \includegraphics[height=1.6in]{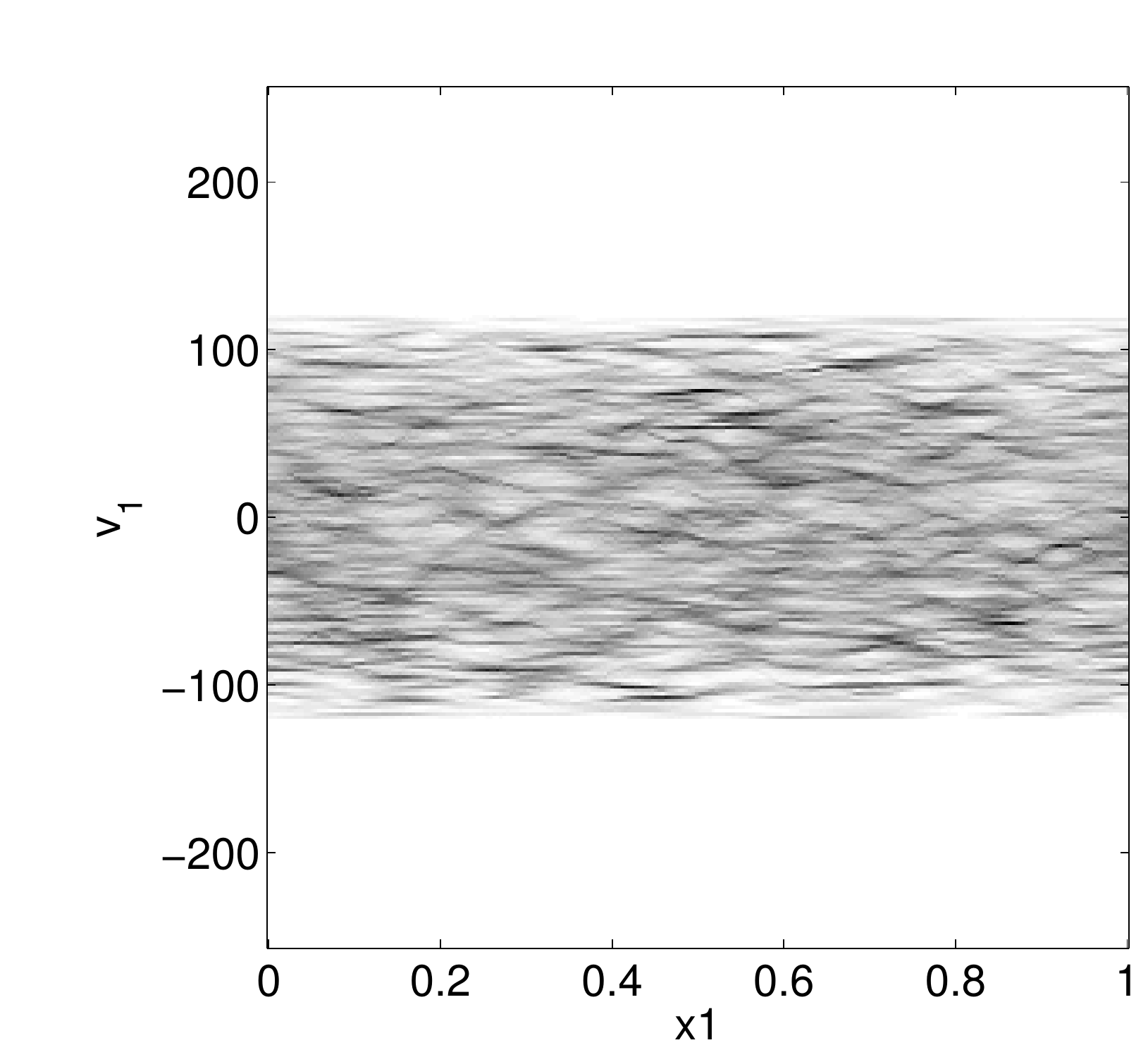} \\
 \includegraphics[height=1.6in]{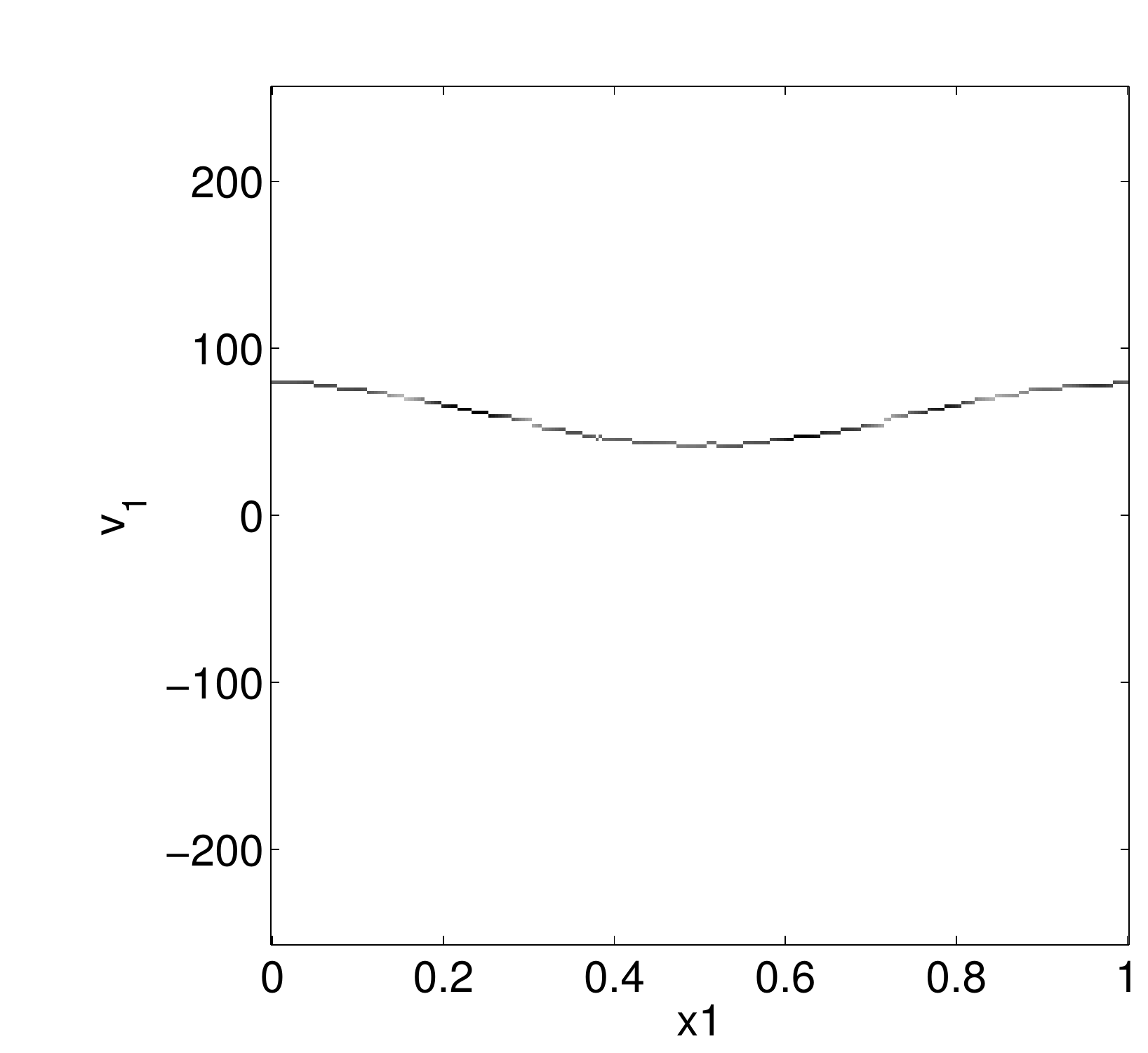} &  \includegraphics[height=1.6in]{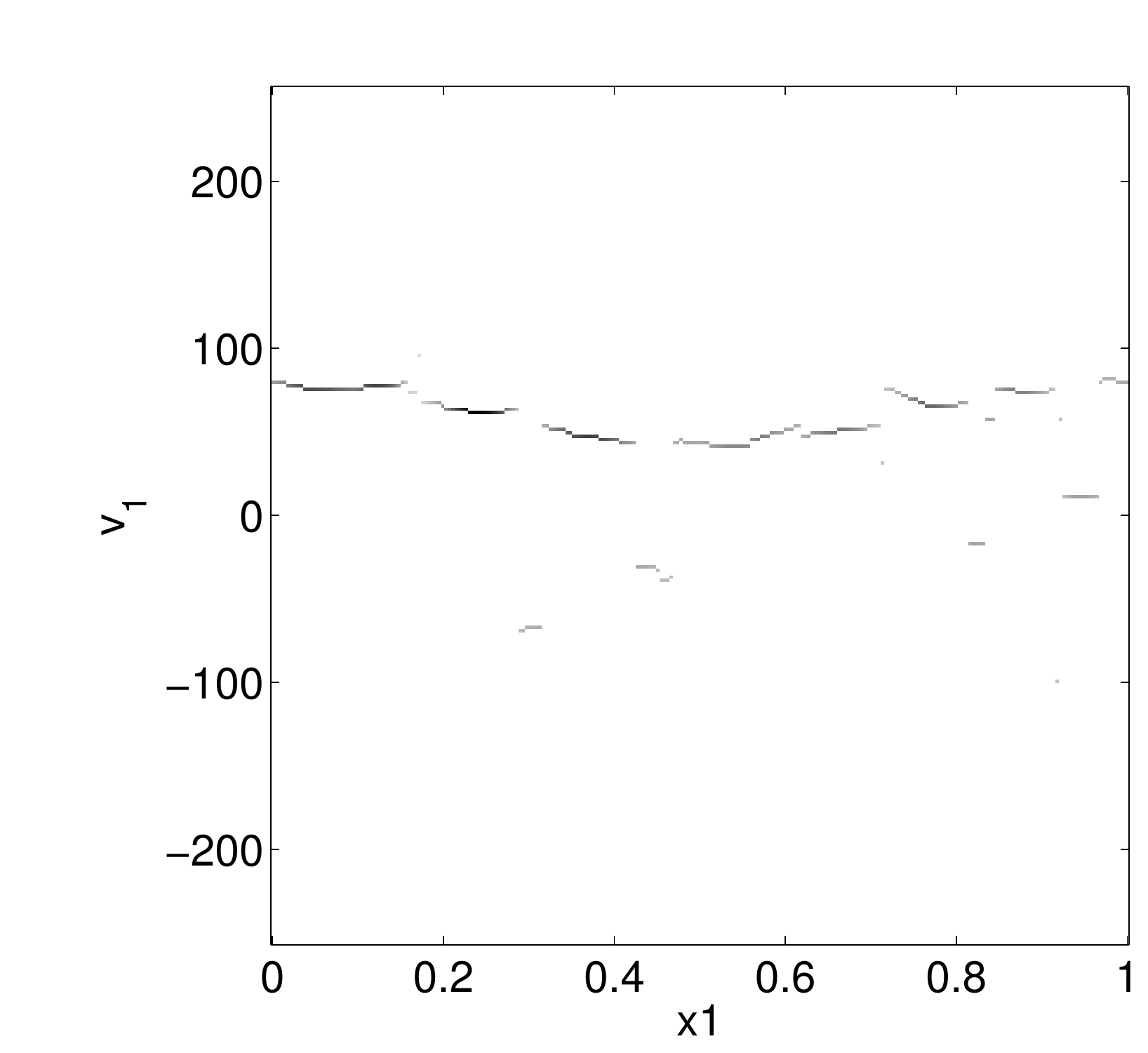} &  \includegraphics[height=1.6in]{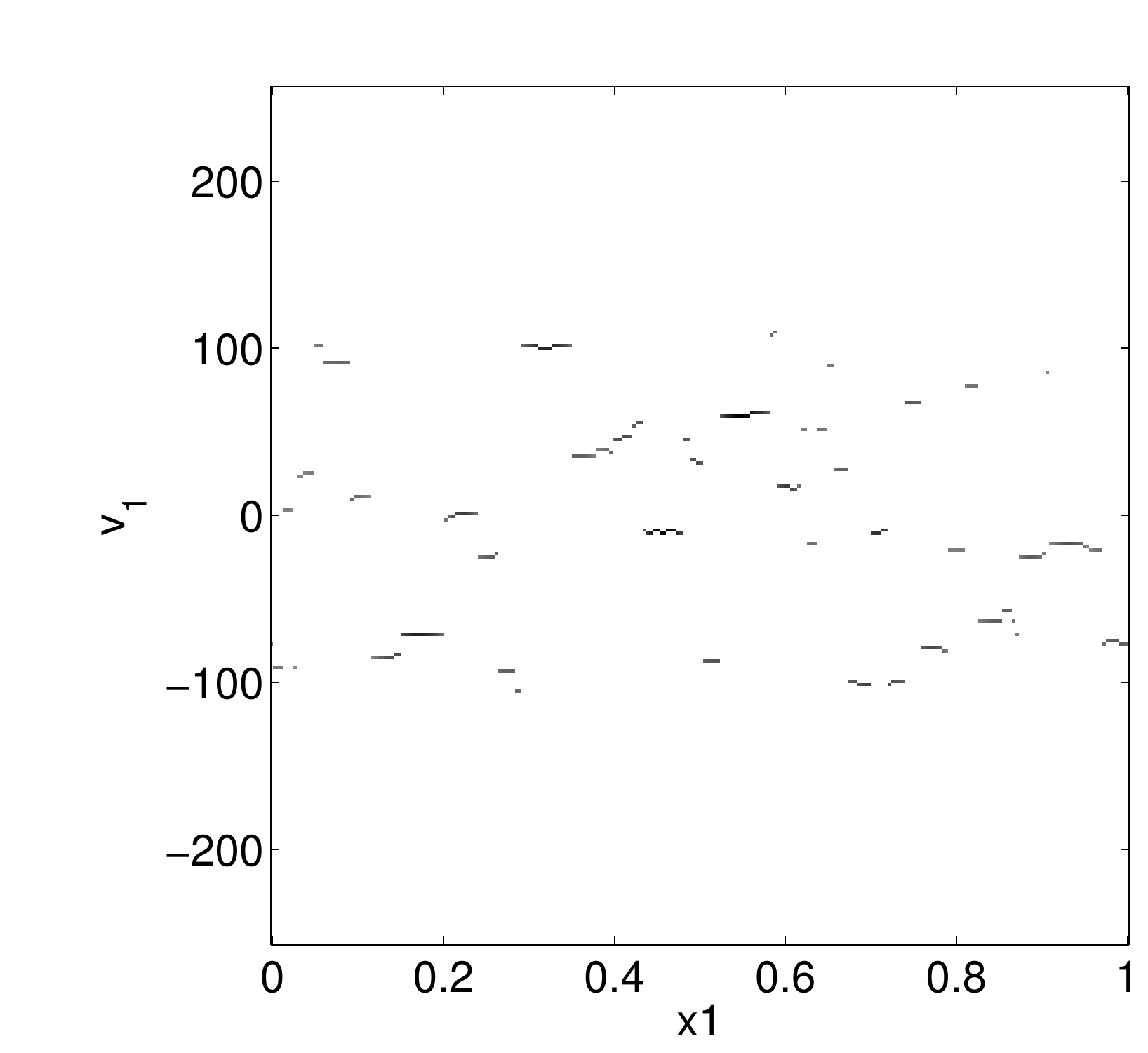} 
    \end{tabular}
  \end{center}
  \caption{Top row: The synchrosqueezed energy distribution of the highly redundant band-limited SSWPT with a frequency band $20$ to $120$ Hz. Bottom row: reassigned wave packet coefficients with the largest magnitude at a space location. Left column: $5\mathcal{N}(0,1)$ noise. Middle column: $10\mathcal{N}(0,1)$ noise. Right column: noise only.}
\label{fig:max}
\end{figure}
\subsubsection{Quantitative performance analysis}
We quantitatively analyze the performance of the highly redundant SSWPT and compare it with other methods in this subsection in terms of statistical stability and computational efficiency. Let us revisit the wave-like component in Figure \ref{fig:orgSST}, where 
\[
f(x)=e^{60\pi i(x+0.05\cos(2\pi x))}
\]
is sampled in $[0,1]$ with a sampling rate $1024$ Hz. Its instantaneous frequency is $q(x)=30(1-0.1\pi\sin(2\pi x))$. Let $g=f+e$ be the noisy data, where $e$ is white Gaussian noise with a distribution $\sigma^2\mathcal{N}(0,1)$. To measure noise, we introduce the following signal-to-noise ratio (\SNR) of the input data $g=f+e$:
\[
\SNR[dB](g)=10\log_{10}\left( \frac{\VAR (f)}{\VAR(e)}\right).
\]
An ideal time-frequency distribution of $f(x)$ is a function $D(v,x)$ such that $D(v,x)=\delta(v-q(x))$, where $\delta(v)$ is a Dirac delta function of $v$. 
To quantify the numerical performance of various methods, we introduce the Earth mover's distance (EMD) \cite{EMDD1,EMDD2,EMDD0} to measure the distance between a resultant time-frequency distribution $T(v,x)$ and the ideal distribution $D(v,x)$. At each $x$, after the discretization and normalization of $T(v,x)$, we obtain a $1$D discrete distribution $\tilde{T}(v,x)$. Similarly, we compute the discrete version $\tilde{D}(v,x)$ of $D(v,x)$. At each $x$, we compute the $1$D EMD between $\tilde{T}(v,x)$ and $\tilde{D}(v,x)$. The average EMD at all $x$ is defined as the distance (also denoted as EMD) between $T(v,x)$ and $D(v,x)$ in this paper. A smaller EMD means a better time-frequency concentration to the ground true instantaneous frequency and fewer noise fluctuations.

\begin{figure}[ht!]
  \begin{center}
    \begin{tabular}{ccc}
   \includegraphics[height=1.6in]{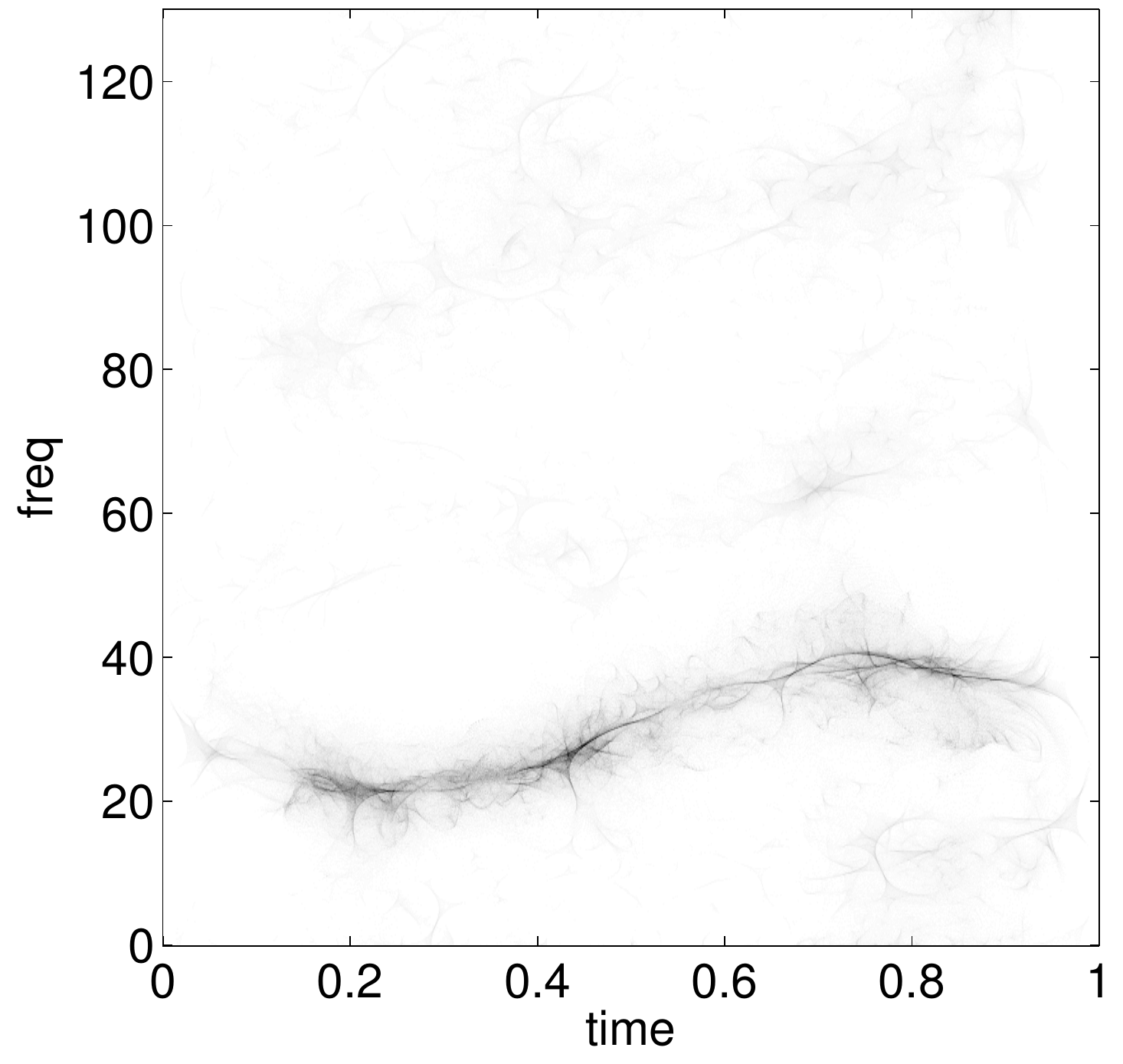}& \includegraphics[height=1.6in]{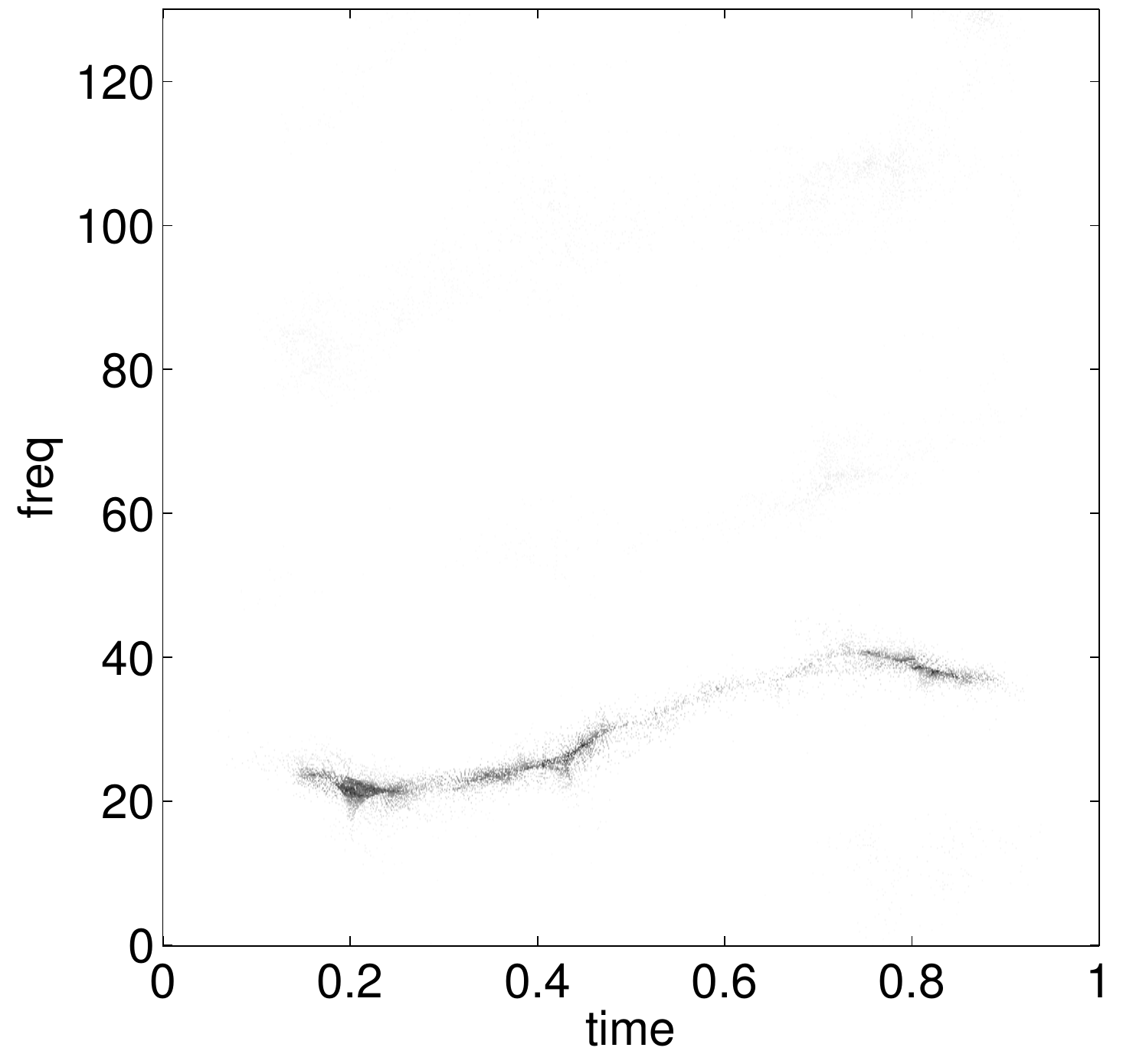} & \includegraphics[height=1.6in]{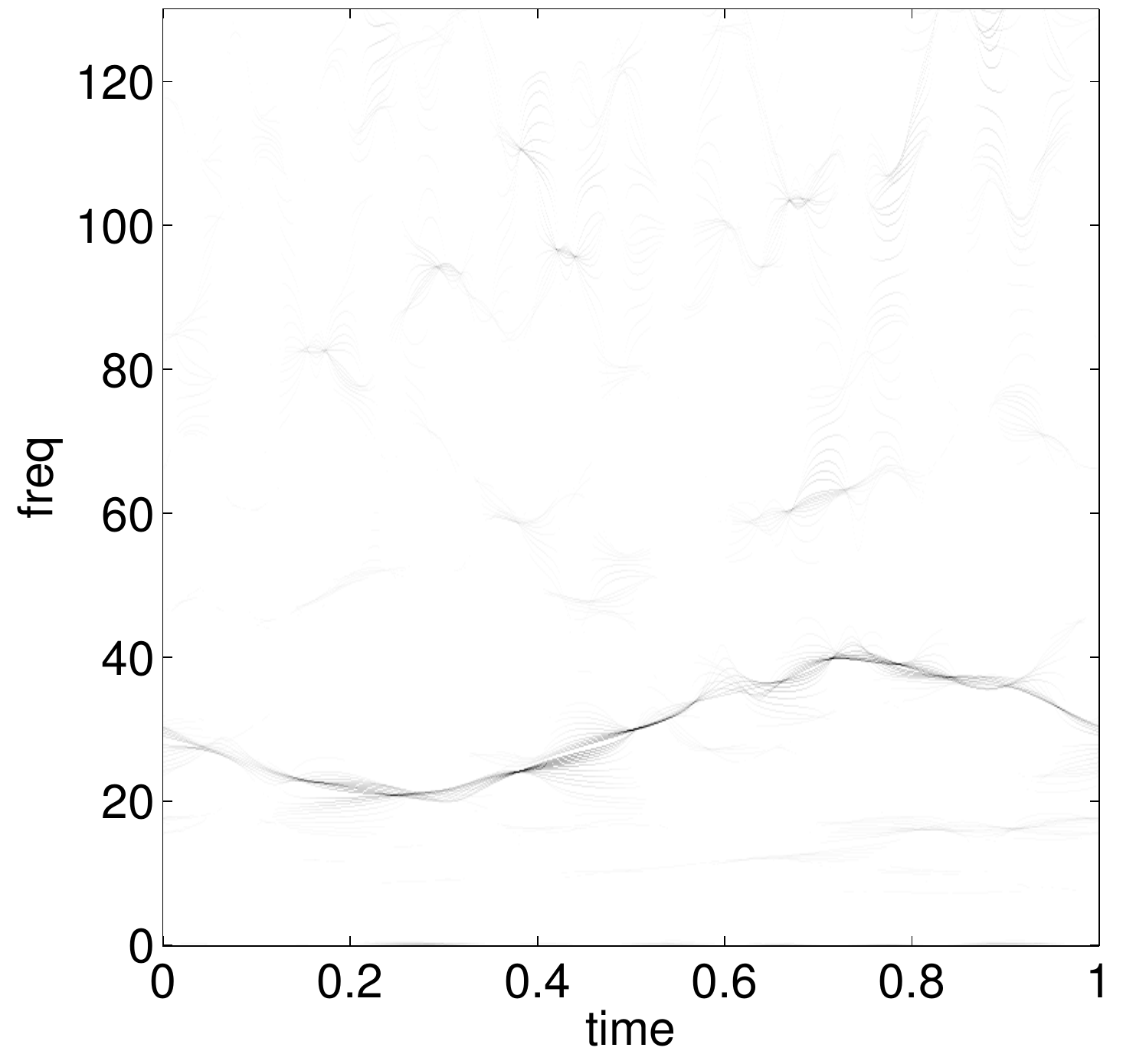} 
    \end{tabular}
  \end{center}
  \caption{Left and middle: The multitaper time-frequency reassignment using an arithmetic mean (MRSA) and a geometric mean (MRSG) with $10$ tapers. Right: The highly redundant SSWPT with $s=0.75$ and $red=10$.}
\label{fig:MTcomp}
\end{figure}

First, we compare the performance of the SSWPT with different redundancy parameters $red$ for noisy data with different $\SNR$s. We compare the EMD between their resultant time-frequency distributions and the ideal one. Figure \ref{fig:Qcomp} (top-left) shows the EMD as functions in the variable of $red$. The EMD functions decrease fast when the $red$ increases to $10$. All the SSWPTs with $red\geq 10$ have almost the same efficiency. 

Second, we compare the performance of the SSWPT with the same redundancy parameters $red=10$ but different geometric scaling parameter $s$ for noisy data with different noise level, e.g. $\sigma^2$ ranging from $0$ to $4$, i.e., $\SNR$ from $\infty$ to $-15$. We compare the EMD between their resultant time-frequency distributions and the ideal one. Figure \ref{fig:Qcomp} (top-right) shows the EMD as functions in the variable of $\sigma^2$ for different $s$. It shows that when the data is clean, the SSWPT with $s=1$, which is essentially the SSWT with high redundancy, has the best performance. However, once the data is noisy, the SSWPT with $s=0.75$ has the best overall performance. This observation agrees with our theorems in Section \ref{sec:thm}. If $s$ is larger, the instantaneous frequency estimation is more accurate if there is no noise. But the probability for a good estimation is smaller if there is noise. If $s$ is smaller , the estimation accuracy is worse but the probability for a good estimation is higher. In theory, it is interesting to find an optimal $s$ that makes a good balance. In practice, a heuristic choice for the optimal $s$ is $0.75$. 

Third, we compare the performance of the standard SSWT, SSSTFT in \cite{code} and the highly redundant SSWPT. Again, we choose the parameters in the example of Figure \ref{fig:orgSST} because they result in good visualization of time-frequency distribution. These transforms are applied to noisy data with different noise level. We compare the EMD between their resultant time-frequency distributions and the ideal one. The comparison is shown in Figure \ref{fig:Qcomp} (bottom-left). The highly redundant SSWPT has better time-frequency concentration than the standard SSWT and SSSTFT in all examples, even if in the noiseless case.

Finally, we compare the multitaper time-frequency reassignment using an arithmetic mean (MRSA) and a geometric mean (MRSG) in\cite{Taper4} with the highly redundant SSWPT. A MATLAB package of the MRSA and the MRSG is available on the authors' homepage. The time-frequency distribution of these three methods are visualized in Figure \ref{fig:MTcomp}. We visualize their results using the same discrete grid in the time-frequency domain. To make a fair comparison, the number of tapers are chosen to be $10$ for the MRSA and the MRSG, and the redundancy parameter $red$ is $10$ for the SSWPT with $s=0.75$. For one realization of this experiment, the running time for the MRSA or the MRSG is $155.13$ seconds, while the running time for the SSWPT is only $0.43$ seconds. A visual inspection of Figure \ref{fig:MTcomp} shows that the SSWPT also outperforms the MRSA and the MRSG: clear spectral energy at the boundary and better time-frequency concentration. To quantify this comparison, we apply the EMD again and the results are shown in Figure \ref{fig:Qcomp} (bottom-right). In most cases, the SSWPT with $s=0.75$ gives a smaller EMD. When $\sigma^2$ is near $4$, i.e., the $\SNR$ is near $-15$,  the MRSG with $10$ tapers and the SSWPT have comparable performance.

\begin{figure}[ht!]
  \begin{center}
    \begin{tabular}{cc}
   \includegraphics[height=2in]{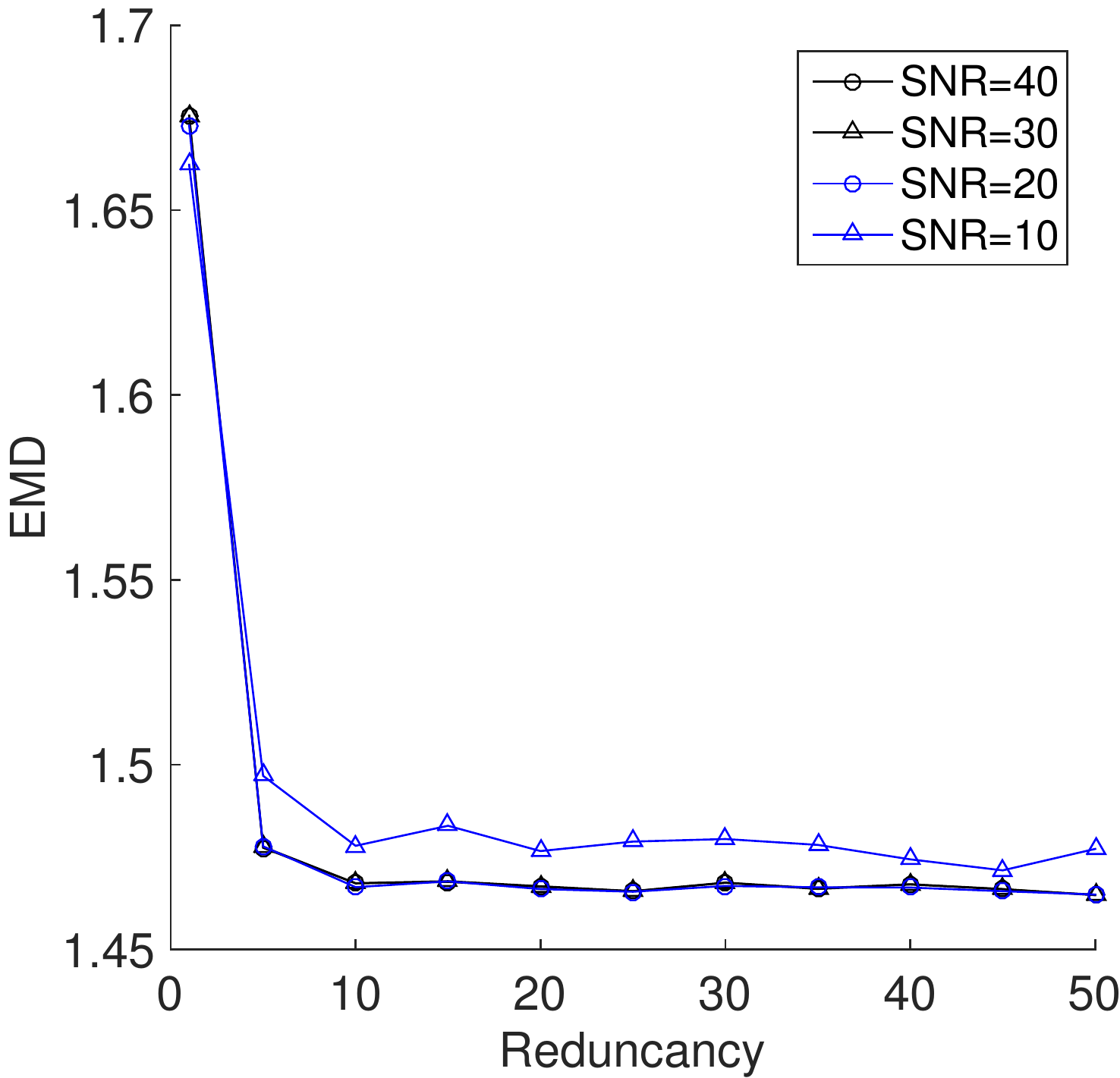}& \includegraphics[height=2in]{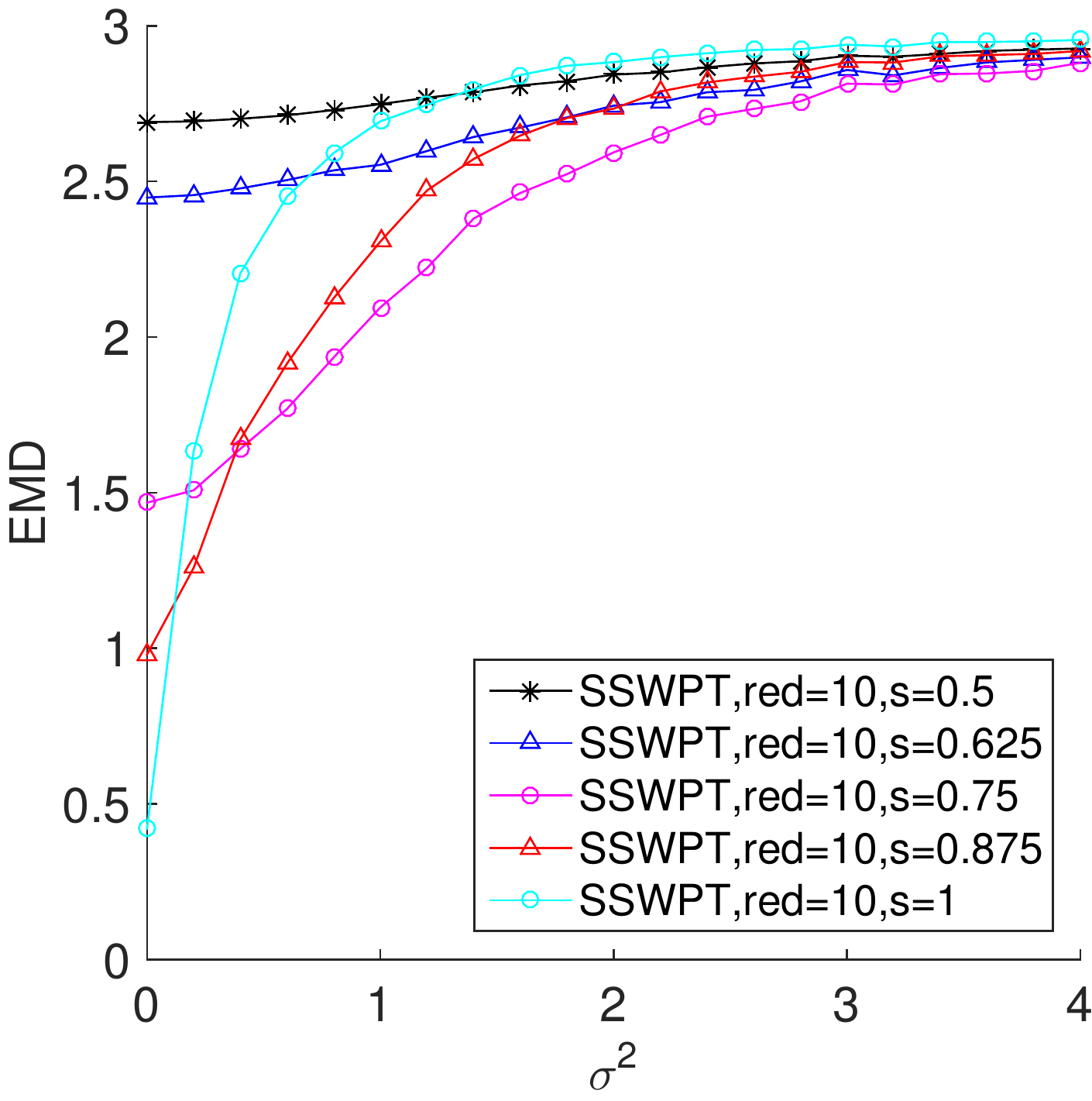}\\
     \includegraphics[height=2in]{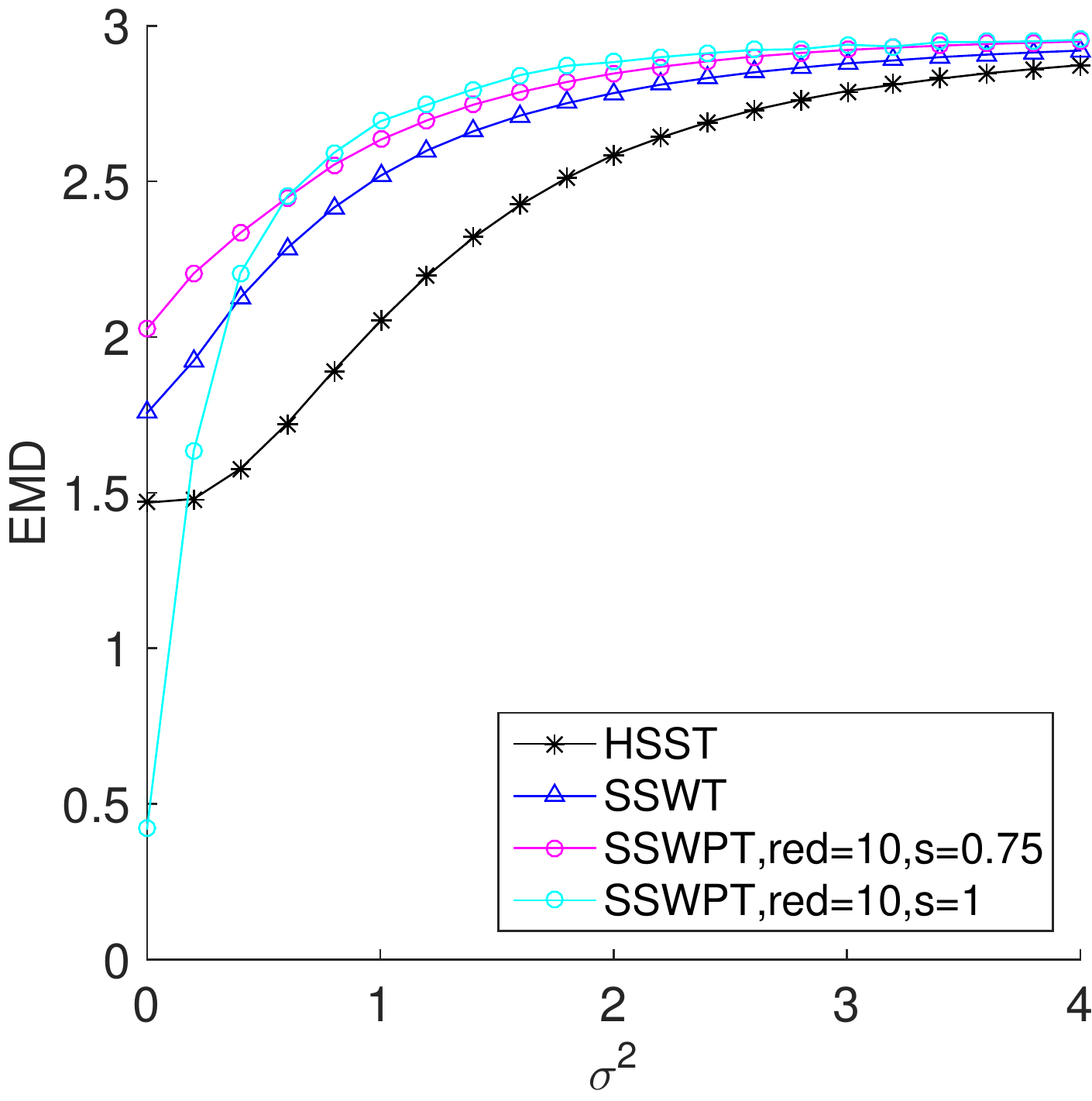} &\includegraphics[height=2in]{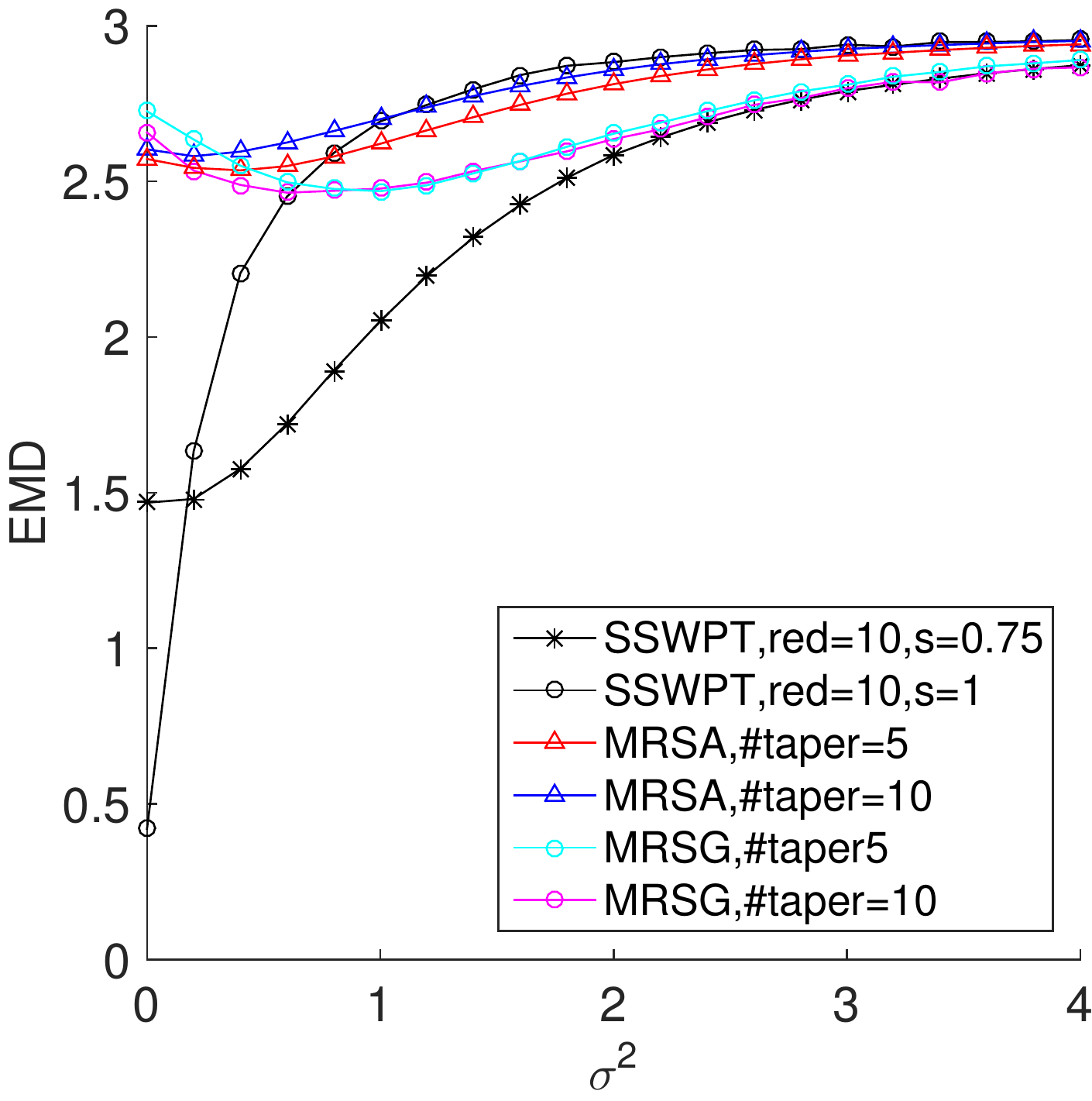} 
    \end{tabular}
  \end{center}
  \caption{Top-left: The EMD as functions in the variable of $red$ for different SNRs. Bottom-left: Comparison of the standard SSWT, SSSTFT and the SSWPT. Bottom-right: Comparison of the multitaper time-frequency reassignment (the numbers of tapers are $5$ and $10$) and the SSWPT. Data plotted above is the averaging of $20$ independent realizations.}
\label{fig:Qcomp}
\end{figure}

\section{Conclusion}
\label{sec:conclusion}

In theory, the statistical analysis in this paper has analyzed the statistical properties of a wide range of compactly supported synchrosqueezed transforms in multidimensional spaces, considering zero mean stationary Gaussian random process and small perturbation. Guided by these properties, this paper has presented several approaches to improve the performance of these synchrosqueezed transforms under heavy noise. A MATLAB package SynLab for these algorithms is available at \url{https://github.com/HaizhaoYang/SynLab}.

{\bf Acknowledgments.} 
This work was partially supported by the National Science
   Foundation under award DMS-0846501 and the U.S. Department of
   Energy’s Advanced Scientific Computing Research program under award
   DE-FC02-13ER26134/DE-SC0009409 for Lexing Ying. H. Yang is also supported by a AMS Simons travel grant. He thanks Jean-Baptiste Tary and Mirko van der Baan for the discussion of benchmark signals, thanks Lexing Ying, Charles K. Chui, Jianfeng Lu and Segey Fomel for the discussion of the implementation and the application of synchrosqueezed transforms.

\bibliographystyle{abbrv} \bibliography{ref}

\begin{thebibliography}{10}

\bibitem{code}
\url{https://sites.google.com/site/hautiengwu/home/download.}

\bibitem{impulse}
\url{http://users.ece.utexas.edu/~bevans/projects/rfi/software/}.

\bibitem{Auger1995}
F.~Auger and P.~Flandrin.
\newblock Improving the readability of time-frequency and time-scale
  representations by the reassignment method.
\newblock {\em Signal Processing, IEEE Transactions on}, 43(5):1068 --1089,
  1995.

\bibitem{TFReview}
F.~Auger, P.~Flandrin, Y.-T. Lin, S.~McLaughlin, S.~Meignen, T.~Oberlin, and
  H.~tieng Wu.
\newblock Time-frequency reassignment and synchrosqueezing: An overview.
\newblock {\em Signal Processing Magazine, IEEE}, 30(6):32--41, Nov 2013.

\bibitem{Taper2}
M.~Bayram and R.~Baraniuk.
\newblock Multiple window time-frequency analysis.
\newblock In {\em Time-Frequency and Time-Scale Analysis, 1996., Proceedings of
  the IEEE-SP International Symposium on}, pages 173--176, Jun 1996.

\bibitem{Boashash92}
B.~Boashash and S.~Member.
\newblock Estimating and interpreting the instantaneous frequency of a signal.
\newblock In {\em Proceedings of the IEEE}, pages 520--538, 1992.

\bibitem{Chirplet1}
E.~J. Cand{\`e}s, P.~R. Charlton, and H.~Helgason.
\newblock Detecting highly oscillatory signals by chirplet path pursuit.
\newblock {\em Applied and Computational Harmonic Analysis}, 24(1):14 -- 40,
  2008.

\bibitem{Chassande-Mottin2003}
E.~Chassande-Mottin, F.~Auger, and P.~Flandrin.
\newblock Time-frequency/time-scale reassignment.
\newblock In {\em Wavelets and signal processing}, Appl. Numer. Harmon. Anal.,
  pages 233--267. Birkh\"auser Boston, Boston, MA, 2003.

\bibitem{Chassande-Mottin1997}
E.~Chassande-Mottin, I.~Daubechies, F.~Auger, and P.~Flandrin.
\newblock Differential reassignment.
\newblock {\em Signal Processing Letters, IEEE}, 4(10):293 --294, 1997.

\bibitem{SSRobust3}
E.~Chassande-Mottin, P.~Flandrin, and F.~Auger.
\newblock On the statistics of spectrogram reassignment vectors.
\newblock {\em Multidimensional Systems and Signal Processing}, 9(4):355--362,
  1998.

\bibitem{Chirplet2}
E.~Chassande-Mottin and A.~Pai.
\newblock Best chirplet chain: Near-optimal detection of gravitational wave
  chirps.
\newblock {\em Physical Review D}, 73(4), 2006.

\bibitem{SSRobust2}
Y.-C. Chen, M.-Y. Cheng, and H.-T. Wu.
\newblock Non-parametric and adaptive modelling of dynamic periodicity and
  trend with heteroscedastic and dependent errors.
\newblock {\em Journal of the Royal Statistical Society: Series B (Statistical
  Methodology)}, 76(3):651--682, 2014.

\bibitem{Hau2014}
C.~K. Chui, Y.-T. Lin, and H.-T. Wu.
\newblock Real-time dynamics acquisition from irregular samples -- with
  application to anesthesia evaluation.
\newblock {\em Analysis and Applications}, Accepted.

\bibitem{2Dwavelet}
M.~Clausel, T.~Oberlin, and V.~Perrier.
\newblock The monogenic synchrosqueezed wavelet transform: a tool for the
  decomposition/demodulation of am–fm images.
\newblock {\em Applied and Computational Harmonic Analysis}, 39(3):450 -- 486,
  2015.

\bibitem{Daubechies2011}
I.~Daubechies, J.~Lu, and H.-T. Wu.
\newblock Synchrosqueezed wavelet transforms: an empirical mode
  decomposition-like tool.
\newblock {\em Appl. Comput. Harmon. Anal.}, 30(2):243--261, 2011.

\bibitem{Daubechies1996}
I.~Daubechies and S.~Maes.
\newblock A nonlinear squeezing of the continuous wavelet transform based on
  auditory nerve models.
\newblock In {\em Wavelets in Medicine and Biology}, pages 527--546. CRC Press,
  1996.

\bibitem{noise1}
D.~L. Donoho.
\newblock De-noising by soft-thresholding.
\newblock {\em IEEE Trans. Inf. Theor.}, 41(3):613--627, may 1995.

\bibitem{noise2}
D.~L. Donoho and J.~M. Johnstone.
\newblock Ideal spatial adaptation by wavelet shrinkage.
\newblock {\em Biometrika}, 81(3):425--455, 1994.

\bibitem{seismic1}
S.~Fomel.
\newblock Seismic data decomposition into spectral components using regularized
  nonstationary autoregression.
\newblock {\em Geophysics}, 78(6):O69--O76, 2013.

\bibitem{Taper3}
G.~Fraser and B.~Boashash.
\newblock Multiple window spectrogram and time-frequency distributions.
\newblock In {\em Acoustics, Speech, and Signal Processing, 1994. ICASSP-94.,
  1994 IEEE International Conference on}, volume~iv, pages IV/293--IV/296
  vol.4, Apr 1994.

\bibitem{Gaussian2}
R.~G. Gallager.
\newblock Circularly-symmetric gaussian random vectors.
\newblock {\em preprint}, 2008.

\bibitem{seismic2}
R.~Herrera, J.~Han, and M.~van~der Baan.
\newblock Applications of the synchrosqueezing transform in seismic
  time-frequency analysis.
\newblock {\em Geophysics}, 79(3):V55--V64, 2014.

\bibitem{Eng2}
W.~Huang, Z.~Shen, N.~E. Huang, and Y.~C. Fung.
\newblock Engineering analysis of biological variables: An example of blood
  pressure over 1 day.
\newblock {\em Proc. Natl. Acad. Sci.}, 95, 1998.

\bibitem{Gaussian1}
H.-H. Kuo.
\newblock {\em White noise distribution theory}.
\newblock Probability and stochastics series. CRC Press, Boca Raton (USA),
  1996.

\bibitem{SSGeneral}
C.~Li and M.~Liang.
\newblock A generalized synchrosqueezing transform for enhancing signal
  time–frequency representation.
\newblock {\em Signal Processing}, 92(9):2264 -- 2274, 2012.

\bibitem{2DCrystal}
J.~Lu, B.~Wirth, and H.~Yang.
\newblock Combining 2{D} synchrosqueezed wave packet transform with
  optimization for crystal image analysis.
\newblock {\em arXiv:1501.06254}, 2015.

\bibitem{Gaussian3}
F.~Neeser and J.~Massey.
\newblock Proper complex random processes with applications to information
  theory.
\newblock {\em Information Theory, IEEE Transactions on}, 39(4):1293--1302, Jul
  1993.

\bibitem{Oberlin}
T.~Oberlin, S.~Meignen, and V.~Perrier.
\newblock Second-order synchrosqueezing transform or invertible reassignment?
  towards ideal time-frequency representations.
\newblock {\em Signal Processing, IEEE Transactions on}, 63(5):1335--1344,
  March 2015.

\bibitem{EMDD1}
O.~Pele and M.~Werman.
\newblock A linear time histogram metric for improved sift matching.
\newblock In D.~Forsyth, P.~Torr, and A.~Zisserman, editors, {\em Computer
  Vision – ECCV 2008}, volume 5304 of {\em Lecture Notes in Computer
  Science}, pages 495--508. Springer Berlin Heidelberg, 2008.

\bibitem{EMDD2}
O.~Pele and M.~Werman.
\newblock Fast and robust earth mover's distances.
\newblock In {\em Computer Vision, 2009 IEEE 12th International Conference on},
  pages 460--467, Sept 2009.

\bibitem{EMDD0}
S.~Peleg, M.~Werman, and H.~Rom.
\newblock A unified approach to the change of resolution: space and gray-level.
\newblock {\em Pattern Analysis and Machine Intelligence, IEEE Transactions
  on}, 11(7):739--742, Jul 1989.

\bibitem{Picinbono97}
B.~Picinbono.
\newblock On instantaneous amplitude and phase of signals.
\newblock {\em IEEE Trans. Signal Processing}, pages 552--560, 1997.

\bibitem{SingerSinger:06}
H.~M. Singer and I.~Singer.
\newblock Analysis and visualization of multiply oriented lattice structures by
  a two-dimensional continuous wavelet transform.
\newblock {\em Physical Review E}, 74:031103, 2006.

\bibitem{GeoReview}
J.~B. Tary, R.~H. Herrera, J.~Han, and M.~van~der Baan.
\newblock {Spectral estimation-What is new? What is next?}
\newblock {\em Review of Geophysics}, 52(4):723--749, Dec. 2014.

\bibitem{Gaussian5}
M.~Taylor.
\newblock Random fields: stationarity, ergodicity, and spectral behavior,
  \url{http://www.unc.edu/math/Faculty/met/rndfcn.pdf}.

\bibitem{SSRobust}
G.~Thakur, E.~Brevdo, N.~S. Fu{\v{c}}kar, and H.-T. Wu.
\newblock The synchrosqueezing algorithm for time-varying spectral analysis:
  robustness properties and new paleoclimate applications.
\newblock {\em Signal Processing}, 93(5):1079--1094, 2013.

\bibitem{SSSTFT}
G.~Thakur and H.-T. Wu.
\newblock Synchrosqueezing-based recovery of instantaneous frequency from
  nonuniform samples.
\newblock {\em SIAM J. Math. Analysis}, 43(5):2078--2095, 2011.

\bibitem{Taper1}
D.~Thomson.
\newblock Spectrum estimation and harmonic analysis.
\newblock {\em Proceedings of the IEEE}, 70(9):1055--1096, Sept 1982.

\bibitem{Gaussian4}
A.~Van Den~Bos.
\newblock The multivariate complex normal distribution -- a generalization.
\newblock {\em Information Theory, IEEE Transactions on}, 41(2):537--539, Mar
  1995.

\bibitem{BM}
M.~van~der Baan.
\newblock \url{http://www.ualberta.ca/~vanderba/ftp/benchmark_signals.zip}.

\bibitem{ENG1}
A.~D. Veltcheva.
\newblock Wave and group transformation by a hilbert spectrum.
\newblock {\em Coastal Engineering Journal}, 44(4), 2002.

\bibitem{Hau-Tieng2013}
H.-T. Wu.
\newblock Instantaneous frequency and wave shape functions (i).
\newblock {\em Applied and Computational Harmonic Analysis}, 35(2):181 -- 199,
  2013.

\bibitem{HauBio2}
H.-T. Wu, Y.-H. Chan, Y.-T. Lin, and Y.-H. Yeh.
\newblock Using synchrosqueezing transform to discover breathing dynamics from
  {ECG} signals.
\newblock {\em Applied and Computational Harmonic Analysis}, 36(2):354 -- 359,
  2014.

\bibitem{HauBio1}
H.-T. Wu, S.-S. Hseu, M.-Y. Bien, Y.~R. Kou, and I.~Daubechies.
\newblock Evaluating physiological dynamics via synchrosqueezing: Prediction of
  ventilator weaning.
\newblock {\em IEEE Transactions on Biomedical Engineering}, 61(3):736--744,
  March 2014.

\bibitem{PhysicalAnal}
Z.~Wu, N.~E. Huang, and X.~Chen.
\newblock Some considerations on physical analysis of data.
\newblock {\em Advances in Adaptive Data Analysis}, 3(1-2):95--113, 2011.

\bibitem{Taper4}
J.~Xiao and P.~Flandrin.
\newblock Multitaper time-frequency reassignment for nonstationary spectrum
  estimation and chirp enhancement.
\newblock {\em Signal Processing, IEEE Transactions on}, 55(6):2851--2860, June
  2007.

\bibitem{Thesis}
H.~Yang.
\newblock {\em {Oscillatory data analysis and fast algorithms for integral
  operators}}.
\newblock PhD thesis, Stanford University, 2015.
\newblock This dissertation is online at:
  \url{http://purl.stanford.edu/fq061ny3299} or
  \url{http://web.stanford.edu/~haizhao/publications/ThesisHaizhao.pdf}.

\bibitem{1DSSWPT}
H.~Yang.
\newblock Synchrosqueezed wave packet transforms and diffeomorphism based
  spectral analysis for 1d general mode decompositions.
\newblock {\em Applied and Computational Harmonic Analysis}, 39(1):33 -- 66,
  2015.

\bibitem{Canvas}
H.~Yang, J.~Lu, W.~Brown, I.~Daubechies, and L.~Ying.
\newblock Quantitative canvas weave analysis using 2-{D} synchrosqueezed
  transforms: Application of time-frequency analysis to art investigation.
\newblock {\em Signal Processing Magazine, IEEE}, 32(4):55--63, July 2015.

\bibitem{2DGWS}
H.~Yang, J.~Lu, and L.~Ying.
\newblock Crystal image analysis using 2{D} synchrosqueezed transforms.
\newblock {\em Multiscale Modeling \& Simulation. A SIAM Interdisciplinary
  Journal}, 2015.

\bibitem{SSWPT}
H.~Yang and L.~Ying.
\newblock Synchrosqueezed wave packet transform for 2d mode decomposition.
\newblock {\em SIAM Journal on Imaging Sciences}, 6(4):1979--2009, 2013.

\bibitem{SSCT}
H.~Yang and L.~Ying.
\newblock Synchrosqueezed curvelet transform for two-dimensional mode
  decomposition.
\newblock {\em SIAM Journal on Mathematical Analysis}, 46(3):2052--2083, 2014.

\end{thebibliography}

\end{document}